\documentclass[reqno,12pt]{amsart} 
\usepackage{amsmath,amssymb,amsfonts,eucal}
\usepackage{enumerate, xypic, psfrag}
\usepackage{graphicx}
\usepackage{hyperref}

\setlength{\textwidth}{6.5in}
\setlength{\oddsidemargin}{0.0in}
\setlength{\evensidemargin}{0.0in}
\setlength{\textheight}{9in}
\setlength{\topmargin}{-.2in}
\pagestyle{plain}

\newcommand \comment[1]{}				

\newtheorem{thm}{Theorem}[section]
\newtheorem{cor}[thm]{Corollary}
\newtheorem{prop}[thm]{Proposition}
\newtheorem{lem}[thm]{Lemma}

\theoremstyle{remark}
\newtheorem{prob}[thm]{Problem}

\numberwithin{equation}{section}
\numberwithin{figure}{section}

\renewcommand \phi{\varphi}
\renewcommand \epsilon{\varepsilon}
\newcommand \eset{\emptyset}
\renewcommand \eset{\varnothing}

\newcommand \lra{\leftrightarrow}
\newcommand \inv{^{-1}}


\newcommand \bcl{\operatorname{bcl}}
\newcommand \rk{\operatorname{rk}}
\newcommand \codim{\operatorname{codim}}
\newcommand\embeds{\hookrightarrow}

\newcommand \bgr[1]{\langle#1\rangle}
\newcommand \full{^{{}^{{}_{{}_\bullet}}\!}}

\newcommand \opp{^\mathrm{op}}

\newcommand \bz{\mathbf{z}}	

\newcommand \cB{\mathcal{B}}
\newcommand \cC{\mathcal{C}}
	
\renewcommand \cL{\mathcal{L}}	
\newcommand \cM{\mathcal{M}}
\newcommand \cN{\mathcal{N}}
\newcommand \cP{\mathcal{P}}

\newcommand \bbA{\mathbb{A}}

\newcommand \bbP{\mathbb{P}}

\newcommand\fF{\mathbf{F}}	

\newcommand\fg{\mathfrak g}

\newcommand \fG{\mathfrak G}

\newcommand \fQ{\mathfrak Q}
\newcommand \fR{\mathfrak R}

\newcommand \fT{\mathfrak T}

\newcommand \te{{\tilde e}}
\newcommand \tf{{\tilde f}}

\newcommand \he{{\hat e}}

\newcommand \hE{{\hat E}}

\newcommand \hN{{\hat N}}

\newcommand \hz{[\infty]}
\newcommand \hZ{\langle\infty\rangle}

\newcommand\Qg{\mathfrak g}
\newcommand\Qh{\mathfrak h}
\newcommand\Qk{\mathfrak k}
\newcommand\Qd{d}
\newcommand\Qez{e_0}

\newcommand\Td{\bar d}
\newcommand\Tg{\bar{\mathfrak g}}
\newcommand\Th{\bar{\mathfrak h}}
\newcommand\Tk{\bar{\mathfrak k}}
\newcommand\Tez{\bar e_0}

\newcommand\PP{\Pi}	
\newcommand\G{{G\full}}

\newcommand\mencev{\cite[Section 2\comment{mencev}]{BG6} }		
\newcommand\ortho{\cite[Section 3\comment{ortho}]{BG6} }		
\newcommand\orthoaff{\cite[Section 3.1.2\comment{orthoaff}]{BG6} }	
\newcommand\orthopar{\cite[Section 3.1\comment{orthopar}]{BG6} }	
\newcommand\orthoaffino{\cite[Section 3.2\comment{orthoaffino}]{BG6} }	

\hyphenation{des-argues-ian non-des-argues-ian non-Des-argues-ian Des-argues-ian}

\pagestyle{plain} 
\markboth{\sc Rigoberto Fl\'orez and Thomas Zaslavsky}{\sc Projective Planarity of $3$-Nets and Biased Graphs}

\begin{document}
\thispagestyle{empty} 

\title{The Projective Planarity Question for \\ Matroids of $3$-Nets and Biased Graphs}

\author{Rigoberto Fl\'orez}
\address{The Citadel, Charleston, South Carolina 29409}
\email{\tt rigo.florez@citadel.edu}

\author{Thomas Zaslavsky}
\address{Dept.\ of Mathematical Sciences, Binghamton University, Binghamton, New York 13902-6000}
\email{\tt zaslav@math.binghamton.edu}

\date{\today}

\begin{abstract}
A biased graph is a graph with a class of selected circles (``cycles'', ``circuits''), called ``balanced'', such that no theta subgraph contains exactly two balanced circles.  
A biased graph has two natural matroids, the frame matroid and the lift matroid.  

A classical question in matroid theory is whether a matroid can be embedded in a projective geometry.  There is no known general answer, but for matroids of biased graphs it is possible to give algebraic criteria.  Zaslavsky has previously given such criteria for embeddability of biased-graphic matroids in Desarguesian projective spaces; in this paper we establish criteria for the remaining case, that is, embeddability in an arbitrary projective plane that is not necessarily Desarguesian.  
The criteria depend on the embeddability of a quasigroup associated to the graph into the additive or multiplicative loop of a ternary coordinate ring for the plane.  

A 3-node biased graph is equivalent to an abstract partial 3-net; thus, we have a new algebraic criterion for an abstract 3-net to be realized in a non-Desarguesian projective plane.  

We work in terms of a special kind of 3-node biased graph called a biased expansion of a triangle.  Our results apply to all finite 3-node biased graphs because, as we prove, every such biased graph is a subgraph of a finite biased expansion of a triangle.  A biased expansion of a triangle, in turn, is equivalent to an isostrophe class of quasigroups, which is equivalent to a $3$-net.  

Much is not known about embedding a quasigroup into a ternary ring, so we do not say our criteria are definitive.  
For instance, it is not even known whether there is a finite quasigroup that cannot be embedded in any finite ternary ring.  If there is, then there is a finite rank-3 matroid (of the corresponding biased expansion) that cannot be embedded in any finite projective plane---a presently unsolved problem.
\end{abstract}

\keywords{Biased graph, biased expansion graph, frame matroid, graphic lift matroid, lifted graphic matroid, matroid representation, $3$-net, quasigroup, loop, ternary ring}

\subjclass[2010]{\emph{Primary} 05B35; \emph{Secondary} 05C22, 20N05, 51A35, 51A45, 51E14.
\comment{
20N05 Loops, quasigroups.
51A35 Non-Desarguesian affine and projective planes
51A45 Incidence structures imbeddable into projective geometries.  
51E14 Finite partial geometries (general), nets, partial spreads
}
}

\thanks{We dedicate this paper to Seyna Jo Bruskin, who during its preparation made our consultations so much more delightful.}

\thanks{Sections \ref{q frame}--\ref{q planar} are based on Chapter 2 of Fl\'orez's doctoral dissertation, supervised by Zaslavsky.}
\thanks{Fl\'orez's research was partially supported by a grant from The Citadel Foundation.  Zaslavsky's research was partially assisted by grant DMS-0070729 from the National Science Foundation.}

\maketitle


\newpage
\section*{Introduction} \label{intro}

This paper bridges a few branches of combinatorial mathematics:  matroids, graphs, projective incidence geometry, and $3$-nets.  They have been variously connected before; but here we take a corner of each and bind them into in a single system.

The central object of interest can be viewed in three ways.  Graph-theoretically, as in \cite{BG}, it is a ``biased expansion of a triangle''.  In incidence geometry, it is an abstract $3$-net.  In algebra, it is an isostrophe class of quasigroups.  (All technical terms will be precisely defined in Section \ref{prelim}.)

A biased graph is a graph with an additional structure that gives it new properties that are yet recognizably graph-like.  Notably, it has two natural generalizations of the usual graphic matroid, which we call its frame and lift matroids.  In \cite[Part IV]{BG}\footnote{To read this paper it is not necessary to know \cite{BG} or \cite{BG6}.}   we studied linear, projective, and affine geometrical representation of those matroids using coordinates in a skew field.
That leaves a gap in the representation theory because there are projective and affine geometries (lines and planes) that cannot be coordinatized by a skew field and there are biased graphs whose matroids cannot be embedded in a vector space over any skew field.  
A reason for that gap is that our representation theory depended on coordinates.  
In \cite{BG6} we close a piece of that gap with a purely synthetic alternative development of geometrical representability of the frame and lift matroids.  
In this paper, which may be considered a complement to \cite{BG6}, we develop a kind of synthetic analytic geometry (!): we produce criteria for existence of representations in non-Desarguesian planes by appealing to the fall-back coordinatization of a non-Desarguesian plane via a ternary ring, whose algebra depends on synthetic constructions (Sections \ref{planes}--\ref{questions}).  Quasigroups and their associated $3$-nets are fundamental to the planar representation theory because matroid representation of a biased expansion of the triangle graph $K_3$ (this is a kind of biased graph) is by means of ``regular'' (i.e., triangular or affine) embedding of the associated $3$-net in the plane.  
We were surprised that even with the help of the Loop \& Quasigroup Forum \cite{LQF} we could not find published criteria for a $3$-net to embed regularly in a projective plane.  Propositions \ref{L:trinet} and \ref{L:affnet} provide such criteria.  Our results are that three problems are equivalent:  representation of matroids of biased expansions of $K_3$ in a projective plane $\PP$, regular embeddability of the related $3$-net in $\PP$, and embeddability of a quasigroup of the $3$-net in the additive or multiplicative loop of a ternary ring coordinatizing $\PP$.
Furthermore, all finite biased graphs of order 3 are subgraphs of finite biased expansions of $K_3$ (Theorem \ref{T:qx}); equivalently, every finite partial 3-net extends to a finite 3-net.

Matroids of biased graphs of order 3 have been employed, usually in disguise, in several papers about matroid representability.  A brief and incomplete summary:  Reid \cite{Reid} introduced some of the ``Reid matroids'' \cite{Oxley} that were later shown to be linearly representable over only one characteristic.  Reid matroids are matroids of biased graphs of order 3; subsequently they were used by Gordon \cite{Galg}, Lindstr\"om \cite{LindAlg}, and Fl\'orez \cite{FLind} as examples of algebraic representability in a unique characteristic and of algebraic non-representability;  
by Fl\'orez \cite{FHarm} to develop the notion of harmonic matroids; by 
McNulty \cite{McN, McNnon} as examples of matroids that cannot be oriented and then by Fl\'orez and Forge \cite{FF} as minimal such matroids.  The biased graphs behind the matroids first appear explicitly in the papers of Fl\'orez.

Kalhoff \cite{K} shows that every finite rank-3 matroid embeds in some countable but possibly infinite projective plane.
It is still not known whether every finite matroid of rank 3 embeds in a finite projective plane.  By Theorems \ref{T:qx}, \ref{parte1}, and \ref{bmain}, the existence of a finite loop that is isomorphic to no subloop of the multiplicative or additive loop of any finite ternary ring implies that the related biased expansion matroid is a finitely non-embeddable rank-3 matroid (see Problems \ref{PrD:planerepb} and \ref{PrD:planerepl}).  
That leaves the problem of deciding whether a given loop does embed in a given ternary ring, or in some ternary ring, or equivalently whether a given $3$-net embeds regularly in some projective plane.  Criteria for that are not known.

We anticipate that the audience for this paper may include both matroid theorists and persons interested in projective planes and quasigroups.  We have tried to provide adequate background for both kinds of reader.  
We review the necessary background about graphs and matroids in Section \ref{prelim} and about planes, nets,  and ternary rings in Section \ref{planes}.  Besides the diverse audience, another reason for the review is the wide variation in notation and terminology about planes.  A third reason is a small innovation, the diamond operation in a ternary ring (Section \ref{ternary}), which is related to Menelaus' criterion for collinearity (Section \ref{q f point}).


\section{Graphs, biased graphs, geometry} \label{prelim}

So as not to require familiarity with previous papers on biased graphs, we state all necessary definitions.  We also provide the required background from algebra and projective geometry.

\subsection{Algebra}\label{algebra}\

We denote by $\fF$ a skew field.  Its multiplicative and additive groups are $\fF^\times$ and $\fF^+$.  

For a binary operation $*$, $*\opp$ denotes its opposite: $x *\opp y := y * x$.  

\subsubsection{Quasigroups}\label{qgps}\

A \emph{quasigroup} is a set $\fQ$ with a binary operation such that, if $x,z \in \fQ$, then there exists a unique $y \in \fQ$ such $x \cdot y = z$, and if $y,z \in \fQ$ then there exists a unique $x \in \fQ$ such that $x \cdot y = z$.    
(We write a raised dot for the operation in an abstract quasigroup to distinguish it from the operation in a subquasigroup of a ternary ring or skew field.)  
A \emph{loop} is a quasigroup with an identity element.\footnote{Fortunately, we do not use graph loops.}  
A \emph{trivial} quasigroup has only one element.  
The quasigroups $(\fQ, \cdot )$ and $(\fQ', \times)$ are \emph{isotopic} if there are bijections $\alpha$, $\beta$ and $\gamma$ from $\fQ$ to $\fQ'$ such that for every $x,y \in  \fQ$, $\alpha(x) \times \beta(y) = \gamma(x \cdot y)$ and \emph{principally isotopic} if $\gamma$ is the identity mapping (then $\fQ$ and $\fQ'$ must have the same elements).  
They are \emph{conjugate} if the equation $x \cdot y = z$ corresponds to an equation $x' \times y' = z'$ where $(x',y',z')$ is a definite permutation of $(x,y,z)$.  They are \emph{isostrophic} if one is conjugate to an isotope of the other.  
Two elementary but illuminating facts from quasigroup theory:  Every quasigroup is principally isotopic to a loop.  If some isotope of a quasigroup is a group, then every isotope that is a loop is that same group.

We have not found a source for the following simple lemma.  

\begin{lem}\label{L:subisotope}
Suppose $\fQ_1$ and $\fQ_2$ are quasigroups with isotopes $\fQ_1'$ and $\fQ_2'$ such that $\fQ_1'$ is a subquasigroup of $\fQ_2'$.  Then $\fQ_1$ and $\fQ_2$ have isotopes $\fQ_1''$ and $\fQ_2''$ that are loops with $\fQ_1''$ a subloop of $\fQ_2''$.
\end{lem}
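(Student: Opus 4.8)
The plan is to construct a single principal isotopy on $\fQ_2'$ that simultaneously turns $\fQ_2'$ into a loop and restricts to a loop structure on $\fQ_1'$. The lemma then follows from transitivity of isotopy: the loops $\fQ_1''$ and $\fQ_2''$ I build will be principal isotopes of $\fQ_1'$ and $\fQ_2'$, which are in turn isotopic to $\fQ_1$ and $\fQ_2$.

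First I would recall the standard construction behind the cited fact that every quasigroup is principally isotopic to a loop. Write $\cdot$ for the operation of $\fQ_2'$, and for $a\in\fQ_2'$ let $L_a$ and $R_a$ denote left and right translation by $a$, each a bijection of $\fQ_2'$. Fix any $a,b\in\fQ_1'$ and define $x*y := R_b^{-1}(x)\cdot L_a^{-1}(y)$. A direct check shows that $(\fQ_2',*)$ is a loop with identity $a\cdot b$, and the defining equation exhibits it as a principal isotope of $(\fQ_2',\cdot)$ (via $\alpha=R_b$, $\beta=L_a$, $\gamma=\mathrm{id}$).

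The crucial point is to choose $a$ and $b$ inside the subquasigroup $\fQ_1'$ and to use that a subquasigroup is closed not merely under the operation but under both division operations: for $x,z\in\fQ_1'$ the unique solutions of $x\cdot y=z$ and of $y\cdot x=z$ again lie in $\fQ_1'$. Consequently, for $x,y\in\fQ_1'$ we have $R_b^{-1}(x),L_a^{-1}(y)\in\fQ_1'$, whence $x*y\in\fQ_1'$; so $\fQ_1'$ is closed under $*$, and the restriction of $*$ to $\fQ_1'$ is exactly the principal isotope of $(\fQ_1',\cdot)$ formed with the same $a,b$. That restriction is therefore itself a loop, with identity $a\cdot b$, which is precisely the identity of $(\fQ_2',*)$.

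Setting $\fQ_2'':=(\fQ_2',*)$ and $\fQ_1'':=(\fQ_1',*)$ then makes $\fQ_1''$ a subloop of $\fQ_2''$ (a subset containing the common identity and closed under $*$, which is itself a loop). Finally, since isotopy is an equivalence relation, $\fQ_2''$ is isotopic to $\fQ_2'$ and hence to $\fQ_2$, and likewise $\fQ_1''$ is isotopic to $\fQ_1$, which finishes the argument. I expect the only step needing care to be the closure of a subquasigroup under the two divisions, since that is exactly what allows one isotopy to serve both quasigroups at once; the verification that $a\cdot b$ is a two-sided identity for $*$ is routine.
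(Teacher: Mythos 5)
Your proof is correct and is essentially the paper's argument in algebraic rather than tabular form: the paper's relabelling of the rows and columns of the multiplication table of $\fQ_2'$ by the column and row of an element $e \in \fQ_1'$ is exactly your principal isotope $x * y = R_e^{-1}(x) \cdot L_e^{-1}(y)$ (your construction with $a = b = e$), and both arguments hinge on the same key observation that the subquasigroup $\fQ_1'$ is carried into itself by these translations, so that one isotopy serves both quasigroups at once and produces nested loops with a common identity.
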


\begin{proof}
Write out the multiplication table $T_2'$ of $\fQ_2'$ with borders $x, y$ so the element in row $x$, column $y$ is $x \cdot y$.  ($\fQ_2$ may be infinite so we do not expect anyone to carry out this instruction.)  Since $\fQ_1'$ is a subquasigroup of $\fQ_2'$, the table of $\fQ_1'$ is contained in $T_2'$.  
Choose an element $e \in \fQ_1'$ and relabel the rows and columns of $T_2'$ by the entries in, respectively, the column and row of $e$ in $T_2'$.  Labels in $\fQ_1'$ remain labels in $\fQ_1'$ because $\fQ_1'$ is closed under multiplication.  The relabelling gives a new table $T_2''$ that defines a principal isotope $\fQ_2''$ of $\fQ_2'$ that has $e$ as identity and, contained within it, a principal isotope $\fQ_1''$ of $\fQ_1'$ that also has $e$ as identity.
\end{proof}

\subsection{Projective and affine geometry}\label{geom}\

A good reference for the basics of projective and affine geometry is Dembowski \cite{Demb}.
We use the notation of Stevenson \cite{st} for planes, with a couple of obvious changes.
We assume the reader knows the axiomatic definition of a projective plane, as in \cite{Demb, HP, st} for instance.  

A $d$-dimensional projective geometry over a skew field is denoted by $\bbP^d(\fF)$.  Projective and affine geometries may have dimension $1$; that is, they may be lines (of order not less than 2); such a line is called \emph{Desarguesian} if it comes with projective (i.e., homogeneous) or affine coordinates in a skew field.  In analogy to the usual geometry, an affine line is a projective line less a point; the order of a projective line is its cardinality minus 1 and that of an affine line is its cardinality.

A plane $\PP$, on the contrary, need not be Desarguesian, but even if not, it has a coordinatization by a kind of ternary algebra $\fT$ called a ternary ring, a structure introduced by Hall \cite{PTR}.  
When the plane is Desarguesian, the ternary operation of $\fT$ is simply the formula $t(x,m,b) = xm+b$ in $\fF$.  In general, $\fT$ is neither uniquely determined by $\PP$ nor is it easy to treat algebraically.  
We treat ternary rings in Section \ref{ternary}.

\subsection{Graphs}\label{graphs}\

A graph is $\Gamma = (N,E)$, with node set $N = N(\Gamma)$ and edge set $E = E(\Gamma)$.  
Its \emph{order} is $\#N$.  Edges may be links (two distinct endpoints) or half edges (one endpoint); the notation $e_{uv}$ means a link with endpoints $u$ and $v$ and the notation $e_v$ means a half edge with endpoint $v$.\footnote{The loops and loose edges that appear in \cite{BG} are not needed here.} We allow multiple links, but not multiple half edges at the same node.  If $\Gamma$ has no half edges, it is \emph{ordinary}.  If it also has no parallel edges (edges with the same endpoints), it is \emph{simple}.  The \emph{simplification} of $\Gamma$ is the graph with node set $N(\Gamma)$ and with one edge for each class of parallel links in $\Gamma$.  The \emph{empty graph} is $\eset := (\eset,\eset)$.  
For $X\subseteq N$, $E{:}X$ denotes the set of edges whose endpoints are all in $X$.  
We make no finiteness restrictions on graphs.  

A graph is \emph{inseparable} if it has no cut node, i.e., no node that separates one edge from another.  A node incident to a half edge and another edge is a cut node; that graph is separable.

A \emph{circle} is the edge set of a simple closed path, that is, of a connected graph with degree 2 at every node.  $C_n$ denotes a circle of length $n$.  The set of circles of a graph $\Gamma$ is $\cC(\Gamma)$.  A \emph{theta graph} is the union of three internally disjoint paths with the same two endpoints.  A subgraph of $\Gamma$ \emph{spans} if it contains all the nodes of $\Gamma$.  For $S \subseteq E$, $c(S)$ denotes the number of connected components of the spanning subgraph $(N,S)$.

Our most important graph is $K_3$.  We let $N:= \{v_1, v_2, v_3 \}$ and $E(K_3):= \{e_{12}, e_{13}, e_{23} \}$ be the node and edge sets of $K_3$.

\subsection{Biased graphs and biased expansions}\label{bg}\

This exposition, derived from \cite{BG}, is specialized to graphs of order at most 3, since that is all we need for plane geometry.  Also, we assume there are no loops or loose edges.

\subsubsection{Biased graphs}\label{bgbasics}\

A \emph{biased graph} $\Omega = (\Gamma,\cB)$ consists of an underlying graph $\|\Omega\| := \Gamma$ together with a class $\cB$ of circles satisfying the condition that, in any theta subgraph, the number of circles that belong to $\cB$ is not exactly 2.  
Another biased graph, $\Omega_1$, is a \emph{subgraph} of $\Omega$ if $\| \Omega_1 \| \subseteq \| \Omega \|$ and $\cB(\Omega_1) = \cB(\Omega) \cap \cC(\Omega_1)$.

In a biased graph $\Omega$, an edge set or a subgraph is called \emph{balanced} if it has no half edges and every circle in it belongs to $\cB$.  Thus, a circle is balanced if and only if it belongs to $\cB$, and a set containing a half edge is unbalanced.  For $S \subseteq E$, $b(S)$ denotes the number of balanced components of the spanning subgraph $(N,S)$.  $N_0(S)$ denotes the set of nodes of all unbalanced components of $(N,S)$.  
A \emph{full} biased graph has a half edge at every node; if $\Omega$ is any biased graph, then $\Omega\full$ is $\Omega$ with a half edge adjoined to every node that does not already support one.
$\Omega$ is \emph{simply biased} if it has at most one half edge at each node and no balanced digons (recall that we exclude loops).  

In a biased graph there is an operator on edge sets, the \emph{balance-closure} $\bcl$,\footnote{Not ``balanced closure''; it need not be balanced.} defined by 
$$
\bcl S := S \cup \{ e \notin S : \text{ there is a balanced circle $C$ such that } e \in C \subseteq S \cup \{e\} \}
$$
for any $S \subseteq E$.  This operator is not an abstract closure since it is not idempotent, but it is idempotent when restricted to balanced edge sets; indeed, $\bcl S$ is balanced whenever $S$ is balanced \cite[Proposition I.3.1]{BG}.  We call $S$ \emph{balance-closed} if $\bcl S = S$ (note that such a set need be neither balanced nor closed).

For the general theory of biased graphs see \cite{BG}.  From now on we concentrate on order 3.

\subsubsection{Biased expansions}\label{bx3}\

A \emph{biased expansion of $K_3$} is a biased graph $\Omega$ with no half edges and no balanced digons, together with a \emph{projection mapping} $p: \|\Omega\| \to K_3$ that is surjective, is the identity on nodes, and has the property that, for the unique circle $C = e_{12} e_{23} e_{31}$ in $K_3$, each edge $e_{ij}$, and each choice of $\te \in p\inv(e)$ and $\tf$ for both edges $e,f \neq e_{ij}$, there is a unique $\te_{ij} \in p\inv(e_{ij})$ for which $\te_{ij} \te \tf$ is balanced.  
It is easy to prove that in a biased expansion of $K_3$, each $p\inv(e)$ has the same cardinality, say $\gamma$; we sometimes write $\gamma\cdot K_3$ for such an expansion.  
The \emph{trivial} biased expansion $1\cdot K_3$ consists of $K_3$ with its circle balanced.  
A \emph{full biased expansion} is $(\gamma\cdot K_3)\full$, also written $\gamma\cdot K_3\full$.

\subsubsection{Isomorphisms}\label{isom}\

An \emph{isomorphism} of biased graphs is an isomorphism of the underlying graphs that preserves balance and imbalance of circles.  
A \emph{fibered isomorphism} of biased expansions $\Omega_1$ and $\Omega_2$ of the same base graph $\Delta$ is a biased-graph isomorphism combined with an automorphism $\alpha$ of $\Delta$ under which $p_1\inv(e_{vw})$ corresponds to $p_2\inv(\alpha e_{vw})$.  In this paper \emph{all isomorphisms of biased expansions are intended to be fibered} whether explicitly said so or not.  
A \emph{stable isomorphism} of $\Omega_1$ and $\Omega_2$ is a fibered isomorphism in which $\alpha$ is the identity function (the base graph is fixed pointwise).

Similar terminology applies to monomorphisms.

\subsubsection{Quasigroup expansions}\label{qx3}\

A biased expansion of $K_3$ can be formed from a quasigroup $\fQ$.  
The \emph{$\fQ$-expansion} $\fQ K_3$ of $K_3$ consists of the underlying graph $(N, \fQ \times E_3)$, where an edge $(\Qg, e_{ij})$ (more briefly written $\Qg e_{ij}$) has endpoints $v_i,v_j $ and the set of balanced circles is 
$$
\cB(\fQ K_3) := \{ \{\Qg e_{12},  \Qh e_{23},  \Qk e_{13}\}  : \Qg \cdot \Qh = \Qk \}.
$$  
The projection mapping $p: \fQ K_3 \to K_3$ is defined by $p(v_i):=v_i$ and $p(\Qg e_{ij}) := e_{ij}.$  
We call any $\fQ K_3$ a \emph{quasigroup expansion} of $K_3$; we also loosely call the associated biased graph $\bgr{\fQ K_3} := (\|\fQ K_3\|, \cB(\fQ K_3))$ a quasigroup expansion.  

Quasigroup expansions of $K_3$ are essentially the same as biased expansions; furthermore, from $\bgr{\fQ K_3}$, one can recover $\fQ$ up to isostrophe; indeed, fibered isomorphism (or stable isomorphism) classes of expansions of $K_3$ are equivalent to isostrophe (or isotopy, respectively) classes of quasigroups.  This is shown by the next result.

\begin{prop}\label{P:isotopism}
Every quasigroup expansion of $K_3$ is a biased expansion, and every biased expansion of $K_3$ is stably isomorphic to a quasigroup expansion.  

Quasigroup expansions $\fQ K_3$ and $\fQ' K_3$ are stably isomorphic if and only if the quasigroups $\fQ$ and $\fQ'$ are isotopic, and they are fibered-isomorphic if and only if the quasigroups $\fQ$ and $\fQ'$ are isostrophic.

Isotopic quasigroups have stably isomorphic biased expansion graphs $\bgr{\fQ K_3}$ and isomorphic matroids of each kind.
\end{prop}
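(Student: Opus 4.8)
The plan is to dispatch the three assertions in turn, in each case exploiting the quasigroup axioms together with the explicit description of circles in an expansion of $K_3$. Everything rests on one structural observation: the only circles of $\fQ K_3$ (or of any expansion of $K_3$) are digons, which are never balanced since $\cB$ contains no digon, and triangles $\{\Qg e_{12}, \Qh e_{23}, \Qk e_{13}\}$, which are balanced precisely when $\Qg \cdot \Qh = \Qk$; and the only theta subgraphs are three parallel edges in a single fiber, or two parallel edges in one fiber together with a length-two path through the third node. To prove the first assertion I would first verify that $\fQ K_3$ is a biased graph. A theta of three parallel edges contains only (unbalanced) digons, so zero of its circles are balanced. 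In a theta built from $\Qg e_{12}, \Qg' e_{12}$ and a path $\Qk e_{13}, \Qh e_{23}$, the digon is unbalanced and the two triangles are balanced exactly when $\Qg \cdot \Qh = \Qk$ and $\Qg' \cdot \Qh = \Qk$; right-cancellation in the quasigroup forces $\Qg = \Qg'$ if both hold, so at most one triangle is balanced and the balanced count is never exactly two. The expansion property is then a direct restatement of the quasigroup axioms: completing a balanced triangle by solving $\Qg \cdot \Qh = \Qk$ for the entry in the missing fiber has a unique solution in each of the three directions (the two division laws, and the operation itself).

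For the converse half of the first assertion I would realize an arbitrary biased expansion $\Omega$, with projection $p$, as some $\fQ K_3$. Since the fibers are equinumerous (as noted before the statement), fix bijections $\lambda_{ij}$ from a common index set $\fQ$ onto $p^{-1}(e_{ij})$, and define $\Qg \cdot \Qh := \Qk$ exactly when $\{\lambda_{12}(\Qg), \lambda_{23}(\Qh), \lambda_{13}(\Qk)\}$ is balanced. The three directions of the expansion property give, respectively, totality of the operation and the two division laws, so $(\fQ, \cdot)$ is a quasigroup. The map $\lambda_{ij}(\Qg) \mapsto \Qg e_{ij}$ then fixes $K_3$ pointwise, preserves fibers and endpoints, and carries balanced triangles to balanced triangles and digons to digons, hence is a stable isomorphism $\Omega \to \fQ K_3$.

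For the second assertion I would read the correspondences off directly. A stable isomorphism $\fQ K_3 \to \fQ' K_3$ fixes the base pointwise, so it restricts to bijections $\alpha, \beta, \gamma \colon \fQ \to \fQ'$ on the fibers over $e_{12}, e_{23}, e_{13}$; preservation of balance says $\Qg \cdot \Qh = \Qk$ precisely when $\alpha(\Qg) \times \beta(\Qh) = \gamma(\Qk)$, that is, $\alpha(\Qg) \times \beta(\Qh) = \gamma(\Qg \cdot \Qh)$, which is the isotopy identity; the construction is visibly reversible, giving the stable/isotopic equivalence. In a fibered isomorphism the base may be moved by an element of $\mathrm{Aut}(K_3) \cong S_3$, and this is where I expect the real bookkeeping: the action of $S_3$ permuting the three edges permutes the roles (first factor, second factor, product) that the relation $\Qg \cdot \Qh = \Qk$ assigns to the three fibers, so each of the six base automorphisms induces one of the six conjugation operations on the quasigroup while the accompanying fiber bijections contribute an isotopy; composing shows $\fQ$ and $\fQ'$ are conjugate to isotopes of one another, i.e., isostrophic, and conversely every isostrophe is realized by a suitable base automorphism and fiber relabelings. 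The main obstacle, and the step I would treat most carefully, is matching the six elements of $\mathrm{Aut}(K_3)$ to the six conjugates with consistent orientation conventions on the circle $e_{12}e_{23}e_{31}$, since a direction slip there scrambles which of the two divisions and the multiplication a given automorphism produces.

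The third assertion is then a corollary: by the second assertion isotopic quasigroups have stably isomorphic expansions; a stable isomorphism is in particular an isomorphism of the underlying biased graphs; and the frame and lift matroids are invariants of the biased graph, so they are carried along isomorphically. I would simply invoke the biased-graph-isomorphism invariance of both matroids to conclude.
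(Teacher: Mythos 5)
Your proposal is correct and takes essentially the same route as the paper's proof: constructing the quasigroup from fiber bijections with balanced triangles defining the product, reading a stable isomorphism as an isotopy via the induced fiber bijections, matching $\mathrm{Aut}(K_3)$ to conjugation to get isostrophe in the fibered case, and deducing matroid isomorphism from biased-graph isomorphism. The only difference is one of detail: you write out the theta-subgraph verification, the cancellation argument, and the $S_3$-conjugation bookkeeping that the paper compresses into ``clearly,'' ``it is easy to see,'' and ``is similar.''
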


\begin{proof}
Clearly, a quasigroup expansion of $K_3$ is a biased expansion.  For the converse, given a biased expansion $\gamma\cdot K_3$, choose any set $\fQ$ with $\gamma$ elements, choose any bijections $\beta_{12}, \beta_{13}, \beta_{23}:  \fQ \to p\inv(e_{12}), p\inv(e_{13}), p\inv(e_{23})$, and for $\Qg, \Qh \in \fQ$ define $\Qg \cdot \Qh := \beta_{13}\inv(\te_{13})$ where $\te_{13}$ is the unique edge in $p\inv(e_{13})$ such that $\{\beta_{12}(\Qg),\beta_{23}(\Qh), \te_{13}\}$ is balanced.  It is easy to see from the definition of a biased expansion that this is a quasigroup operation.

Stable isomorphism follows because the quasigroup multiplication is determined by the balanced triangles of $\fQ K_3$ and (accounting for isotopy) the choice of bijections.  

The property of fibered isomorphism is similar except that a fibered isomorphism may involve an automorphism of $K_3$, thus permitting the quasigroups to differ by conjugation as well as isotopy.

In the last paragraph, the definition of balanced triangles implies the biased expansions are isomorphic if edges are made to correspond according to the isotopism.  The matroids of each kind (frame, full frame, lift, and extended lift) are isomorphic because the biased expansion graphs are.
\end{proof}

\subsubsection{Gain graphs and group expansions}\label{gexp}\

When the quasigroup is a group $\fG$ there are additional properties.  Orient $K_3$ arbitrarily and orient $\fg e$ similarly to $e$.  The \emph{gain} of $\fg e$ in the chosen direction is $\fg$ and in the opposite direction is $\fg\inv$.  The mapping $\phi: E(\fG K_3) \to \fG: \fg e \mapsto \fg$ or $\fg\inv$, depending on the direction, is the \emph{gain function} of $\fG K_3$.  We disambiguate the value of $\phi(e)$ on an edge $e_{ij}$, when necessary, by writing $\phi(e_{ij})$ for the gain in the direction from $v_i$ to $v_j$.  Note that $\phi$ is not defined on half edges.  
Also note that, since group elements are invertible, we can define balanced triangles in $\fG K_3$ as edges such that $\phi(e_{12})\phi(e_{23})\phi(e_{31}) =$ the group identity.  

We call $\fG K_3$ a \emph{group expansion} of $K_3$.
Group expansions of any graph can be defined in a similar way (cf.\ \cite[Section I.5]{BG}), but we only need them for subgraphs of $K_3$.  

A \emph{gain graph} $\Phi = (\Gamma,\phi)$ is any subgraph of a group expansion, with the restricted gain function, possibly together with half edges (but the gain function is defined only on links).  

\subsubsection{Biased graphs of order $3$}\label{bg3}\

Order 3 is special in more than the existence of a quasigroup for every biased expansion of $K_3$.  For order 3, but not higher orders, every finite biased graph is contained in a finite biased expansion.  (We also expect an infinite biased graph of order 3 to be contained in a triangular biased expansion of the same cardinality but we have not tried to prove it.)  

\begin{thm}\label{T:qx}
Every finite biased graph $\Omega=(\Gamma,\cB)$ of order $3$ is a subgraph of a finite biased expansion of $K_3$.  
\end{thm}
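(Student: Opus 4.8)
The plan is to translate the balance structure of $\Omega$ into a partial quasigroup and then complete that partial quasigroup to a \emph{finite} quasigroup $\fQ$, so that the quasigroup expansion $\bgr{\fQ K_3}$, which is a biased expansion by Proposition \ref{P:isotopism}, contains $\Omega$ as a subgraph. First I would normalize $\Omega$. Half edges are harmless: a full biased expansion carries one half edge at each node, so any half edges of $\Omega$ can be matched there; I discard them, embed into $\gamma\cdot K_3$, and reattach at the end (landing in $\gamma\cdot K_3\full$ when half edges are present). Since a biased expansion contains no balanced digon, the substantive case is that $\Omega$ is simply biased, every digon unbalanced; the theta condition shows the two edges of any balanced digon to lie in exactly the same balanced triangles, hence to be interchangeable, so such digons are inessential and I treat the simply biased case.

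With $\Omega$ simply biased, write $A,B,C$ for its edge sets on the three sides $v_1v_2$, $v_2v_3$, $v_1v_3$, and let $\mathcal T\subseteq A\times B\times C$ record the balanced triangles. The theta condition together with the absence of balanced digons makes $\mathcal T$ a \emph{partial quasigroup}: any two coordinates of a triple in $\mathcal T$ determine the third, for a repeated pair would force a balanced digon on the remaining side (each of the three Latin conditions is one such theta argument). Thus $\mathcal T$ is exactly a partial Latin square with rows $A$, columns $B$, and symbols $C$.

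The heart of the proof is to realize $\mathcal T$ inside a finite quasigroup $\fQ$ by injections $A,B,C\hookrightarrow\fQ$ so that among the images the product relation is \emph{exactly} $\mathcal T$. The forward inclusion realizes the balanced triangles of $\Omega$, while the reverse inclusion guarantees that no spurious balanced triangle is created among the old edges; this reverse direction is what the definition of subgraph demands, and it is the main obstacle, since a careless completion will assign some product $f(a)\cdot g(b)$ a value in $C$ at a cell carrying no balanced triangle of $\Omega$. I would secure it by a fresh-symbol device. Fill the entire $A\times B$ block of a multiplication table by keeping the prescribed value $c$ at each cell $(a,b)$ with $(a,b,c)\in\mathcal T$, and giving every other cell its own brand-new symbol lying outside $C$; the partial-quasigroup property makes each row and each column of the block repetition-free, so the block is a filled Latin rectangle in which a symbol of $C$ occurs only at a cell of $\mathcal T$. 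Regarding this block as a partial Latin square on the finite ground set $A\sqcup B\sqcup(C\cup C_{\mathrm{fresh}})$, I invoke the classical theorem of Evans that every finite partial Latin square embeds in a finite Latin square, completing it to the multiplication table of a finite quasigroup $\fQ$ without touching the block.

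Finally, taking the inclusions $A,B,C\hookrightarrow\fQ$ as the fiber bijections exhibits $\Omega$ as a subgraph of $\bgr{\fQ K_3}$: a triangle on old edges is balanced in $\bgr{\fQ K_3}$ precisely when its $A\times B$ entry lies in $C$, which by construction happens exactly on $\mathcal T$, and every digon is unbalanced on both sides. Because Evans' completion is finite, $\fQ$ is finite and $\bgr{\fQ K_3}$ is the desired finite biased expansion. I expect the only genuine difficulty to be this reverse inclusion, the prevention of new balanced triangles among the edges of $\Omega$, which the fresh-symbol filling is designed to overcome; the remaining steps are either the normalization of digons and half edges or a direct appeal to partial-Latin-square completion.
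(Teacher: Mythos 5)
Your proof is correct, and it reaches the theorem by a genuinely different route than the paper's. Both proofs begin with the same (equally informal) normalization---the paper's opening line is simply ``We may assume $\Omega$ has no loose or half edges or balanced digons,'' so your explicit handling of half edges and balanced digons is, if anything, more candid---and both then face the same two tasks: encode the balanced triangles of $\Omega$ as a partial combinatorial structure, and complete it without creating spurious balance among the old edges. You encode the structure as a partial Latin square with rows $A$, columns $B$, symbols $C$ (your theta-graph argument for the Latin conditions is exactly right), you block spurious balanced triangles by pre-filling every empty cell of the $A\times B$ block with a distinct fresh symbol outside $C$, and you then outsource the completion to Evans' classical theorem that every finite partial Latin square embeds in a finite Latin square. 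The paper encodes the same data cryptomorphically as a bipartite graph on $A\sqcup B$ equipped with one partial matching $M_k$ per edge $c_k\in C$ (an edge $a_ib_j\in M_k$ is exactly your cell $(a_i,b_j)$ holding symbol $c_k$), and completes it by a self-contained construction: repeatedly adjoining new vertices and invoking K\"onig's $1$-factorization theorem for regular bipartite graphs, while maintaining the invariant that no edge is ever added inside the original vertex set for the matchings $M_1,\dots,M_r$ of old symbols---the exact counterpart of your fresh-symbol device. What each approach buys: yours is shorter and makes the Latin-square/quasigroup content of the theorem transparent, at the cost of importing Evans' theorem as a black box; the paper's is self-contained and yields an explicit bound $\gamma\le r(p+q)-\tau$ on the order of the resulting expansion (discussed right after its proof), somewhat better than the roughly $2(p+q+r+pq-\tau)$ that Evans' theorem hands you, though both are quadratic in $\#E(\Omega)$.
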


\begin{proof}
We may assume $\Omega$ has no loose or half edges or balanced digons.  The simplification of $\Gamma$ is a subgraph of $K_3$ so its edge set is $A \cup B \cup C$ where $A=\{a_1,\ldots,a_p\}$ is the set of edges with endpoints  $v_1,v_2$, $B=\{b_1,\ldots,b_q\}$ is the set of edges with endpoints $v_2,v_3$, and $C=\{c_1,\ldots,c_r\}$ consists of the edges with nodes $v_1,v_3$.  We assume $r \leq q \leq p$.  We call a biased graph that contains $\Omega$ and has the same node set a \emph{thickening} of $\Omega$.

We show how to represent $\Omega$ as a simple bipartite graph $G$ that is the union of $r$ partial matchings.  The two node classes of $G$ are $A$ and $B$.  There is a partial matching $M_k$ for each edge $c_k$; $M_k$ consists of an edge $a_ib_j$ for each balanced triangle $a_ib_jc_k$.  The size of $M_k$ is the number $\tau_k$ of balanced triangles on $c_k$.  If no balanced triangle contains $c_k$, then $M_k=\eset$.  The \emph{matching structure} $\hat G:=(G;M_1,\ldots,M_r)$ is sufficient to determine $\Omega$, so the biased graph and the matching structure are equivalent concepts.  

The biased graph is a biased expansion if and only if every $M_k$ is a complete matching.  
If $\Omega$ is contained in a supergraph $\Omega'$ (also of order 3), then $G \subseteq G'$, $r \leq r'$, and $M_k \subseteq M'_k$ for $1\leq k \leq r$, with the additional condition that any edge of $M'_k$ (for $k\leq r$) whose endpoints are in $G$ must belong to $M_k$; we call $\hat G'$ that satisfies these requirements an \emph{extension} of $\hat G$.  
Thus, thickening $\Omega$ to a biased expansion $\gamma\cdot K_3$ is equivalent to extending $\hat G$ to a (usually larger) complete bipartite graph with a 1-factorization, amongst whose 1-factors are $r$ perfect matchings that extend those of $\hat G$ subject to the extension condition just stated.

We give a relatively simple constructive proof for $r>0$ that makes the biased expansion larger than necessary.  (The case $r=0$ is treated at the end.)  Let $\hat G^0:=\hat G$, so $M_k^0:=M_k$.  We construct $\hat G^1$.  For each node of $G^0$ that is unmatched in $M_1^0$, add a new node in the opposite node class with an edge joining the two; the new edge belongs to $M_1^1$, which also contains all the edges of $M_1^0$.  This adds $p+q-2\tau_1$ new nodes, resulting in $G^1$ with $p+q-\tau_1$ nodes in each node class and with $p+q-\tau_1$ new edges, all belonging to $M_1^1$.  Let $M_k^1:=M_k^0$ for $1 < k \leq r$.  We now have $\hat G^1$.  In it, the edges of $M_1^1$ form a complete matching.  (If $M_1^0$ was already a complete matching, $\hat G^1=\hat G^0$.)  The number of edges now required to make $M_k^1$ a complete matching is $(p+q-\tau_1)-\#M_k$.

The further steps, constructing $\hat G^i$ from $\hat G^{i-1}$ for $i>1$, are slightly different because we wish to preserve the completeness of any existing complete matching.  We enlarge $N(G^{i-1})$ by adding $\max(r,(p+q-\tau_1)-\#M_k)$ nodes to each class.  The sets of new nodes are $A^i$ and $B^i$.  We modify $M_i^{i-1}$ the same way we modified $M_1^0$ except that if not all new nodes are matched in $M_i^i$ we add edges to match the unmatched new nodes; and also each $M_k^{i-1}$ for $k \neq i$ is enlarged by the edges of a complete matching of the new nodes.  
Since there are at least $r$ new nodes in each class, there exist $r$ edge-disjoint complete matchings of the new nodes.  (Proof:  Extend the partial matching $M_i^i$ of the new nodes to a complete matching $M$; use K\"onig's theorem to decompose $K_{A^i,B^i}\setminus M$ into $\#A^i \geq r$ 1-factors; and use $r-1$ of these 1-factors as the additional edges of the $r-1$ matchings $M_k^i$.)

The number of nodes in each class unmatched by an $M_k^i$ for $k>i$ is the same as the number unmatched by $M_k^{i-1}$, that is, $(p+q-\tau_1)-\#M_k$, and if $k \leq i$ there are no unmatched nodes.
Thus, at each step one more $M_i$ becomes a complete matching of the current set of nodes; in the end we have $\gamma:=\#A'$ nodes in each node class, where $A' := A \cup A^1 \cup \cdots \cup A^m$, and $r$ edge-disjoint complete matchings between $A'$ and $B' := B \cup B^1 \cup \cdots \cup B^m$.  Furthermore, we never added any edges within the original node set $A \cup B$.

We appeal to K\"onig's theorem once more: the graph $K_{A',B'} \setminus (M_1^r \cup \cdots \cup M_r^r)$ is a regular bipartite graph of degree $\gamma-r$.  It therefore has a 1-factorization into $\gamma-r$ complete matchings $M'_{r+1},\ldots,M'_{\gamma}$.  Let $M_k':=M_k^r$ for $k \leq r$.  The matching system $(K_{A',B'}; M_1',\ldots,M_\gamma')$ extends $\hat G$ and corresponds to a biased expansion $\gamma\cdot K_3$ that thickens $\Omega$.  That concludes the case where $r>0$.

In case $r=0$, we modify $\hat G$ to $\hat G'$ by adding $p-q$ nodes to $B$, so $A':=A$ and $\#A'=\#B'=p$, and taking $M'_1,\ldots,M'_p$ to be the 1-factors of a 1-factorization of $K_{A',B'} \cong K_{p,p}$.
\end{proof}

For the convenience of readers familiar with nets, we state the $3$-net version of this theorem in Corollary \ref{C:p3net}.

We can bound $\gamma$ loosely.  The number of nodes in each class after the first step is $p+q-\tau_1$.  At each following step $i$ the number of nodes in each class increases by $\max(r, p+q-\tau_1-\tau_i) \leq p+q-\tau_i$ (since $\tau_i \leq \min(p,q)=q$), so $p+q-\tau_i \geq p \geq r$.  Therefore, $\gamma \leq \sum_{i=1}^r p+q-\tau_i = r(p+q)-\tau$ where $\tau$ is the total number of balanced triangles.  In terms of edges, $\#E(\gamma\cdot K_3) < \frac13 (\#E(\Omega))^2$.  This is certainly a crude bound but not that large; e.g., it is not exponential in $\#E(\Omega)$.  A more careful analysis might lead to a construction that yields a much closer bound on $\gamma$ in terms of $p, q, r, \tau$, and possibly the $\tau_i$.

In the language of $3$-nets (Section \ref{nets}) Theorem \ref{T:qx} says that every finite incidence system $(\cP,\cL)$ of points and lines (lines are intrinsic objects, not sets of points), with the lines divided into 3 classes, such that every point is incident with three lines and two lines of the same class have no common point (a common point of two lines is a point that is incident with both), is a subsystem of a finite abstract 3-net $\cN = (\cP(\cN),\cL(\cN))$, in the sense that $\cP \subseteq \cP(\cN)$ and $\cL \subseteq \cL(\cN)$, with incidence in $(\cP,\cL)$ implying incidence in $\cN$.

\subsection{Matroids}\label{matroids}\

We assume acquaintance with elementary matroid theory as in \cite[Chapter 1]{Oxley}.  
The matroid of projective (or affine) dependence of a projective (or affine) point set $A$ is denoted by $M(A)$.  
All the matroids in this paper are finitary, which means that any dependent set contains a finite dependent set.
They are also simple, which means every circuit has size at least 3.

\comment{MAKE CIRCUIT FIGURES.}

In an ordinary graph $\Gamma$ an edge set $S$ is \emph{closed} in $\Gamma$ if whenever $S$ contains a path joining the endpoints of an edge $e$, then $e \in S$; this closure defines the graphic matroid $G(\Gamma)$.  
The graphic matroid $G(\Gamma)$ can be defined by its circuits, which are the circles of $\Gamma$.  
The rank function of $G(\Gamma)$ is $\rk S = \#N - c(S)$ for an edge set $S$.  

A biased graph $\Omega$, on the other hand, gives rise to two kinds of matroid, both of which generalize the graphic matroid.  (The matroids of a gain graph $\Phi$ are those of its biased graph $\bgr{\Phi}$.)  

First, the \emph{frame matroid}, written $G(\Omega)$, has for ground set $E(\Omega)$.  A \emph{frame circuit} is a circuit of $G(\Omega)$; it is a balanced circle, a theta graph that contains no balanced circle, or two unbalanced figures connected either at a single node or by a simple path.  (An \emph {unbalanced figure} is a half edge or unbalanced circle.)  
For example, if $\Omega$ is a simply biased graph of order 3, the frame circuits are as shown in Figure \ref{F:framecirc3}.
The rank function in $G(\Omega)$ is $\rk S = \#N - b(S)$; thus, $G(\Omega)$ has rank 3 if $\Omega$ has order 3 and is connected and unbalanced.  
The \emph {full frame matroid} of $\Omega$ is $\G(\Omega) := G(\Omega\full)$ (the frame is the set of half edges at the nodes in $\Omega\full$).  $\G(\Omega)$ has rank 3 if (and only if) $\Omega$ has order 3.
The archetypic frame matroid is the Dowling geometry of a group \cite{CGL}, which is $\G(\fG K_n)$.  (Dowling also mentions $\G(\fQ K_3)$, where $\fQ$ is a quasigroup, and shows that there is no exact higher-order analog.  The nature of the nearest such analogs is established in \cite{AMQ}.)

\begin{figure}[htbp]
\begin{center}
\begin{tabular}{ccc}
\includegraphics[scale=.25]{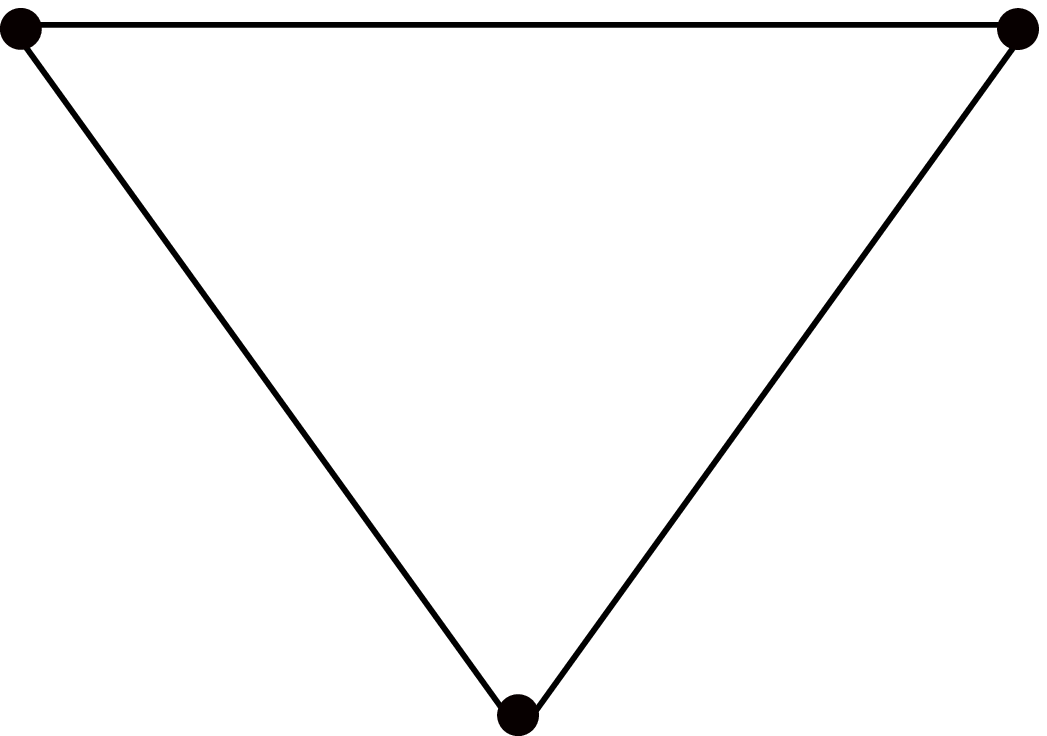} \hspace{1cm} 
&
\includegraphics[scale=.25]{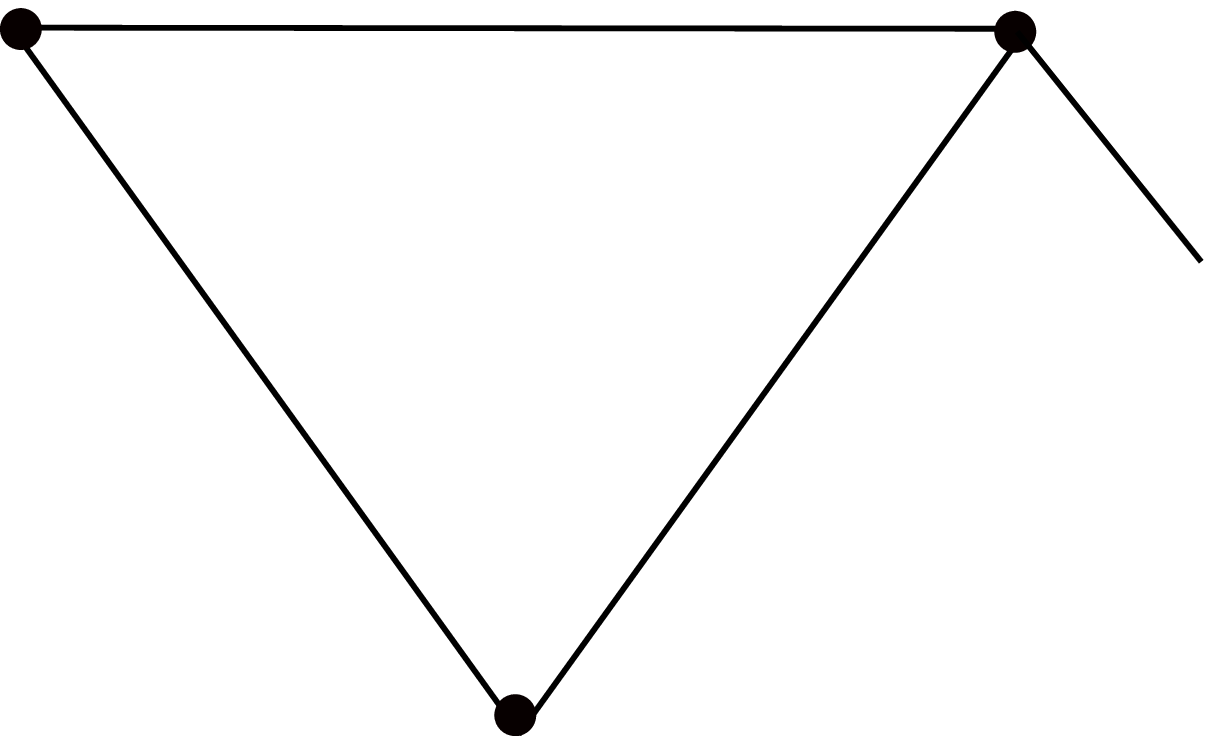} \hspace{1cm} 
&
\includegraphics[scale=.25]{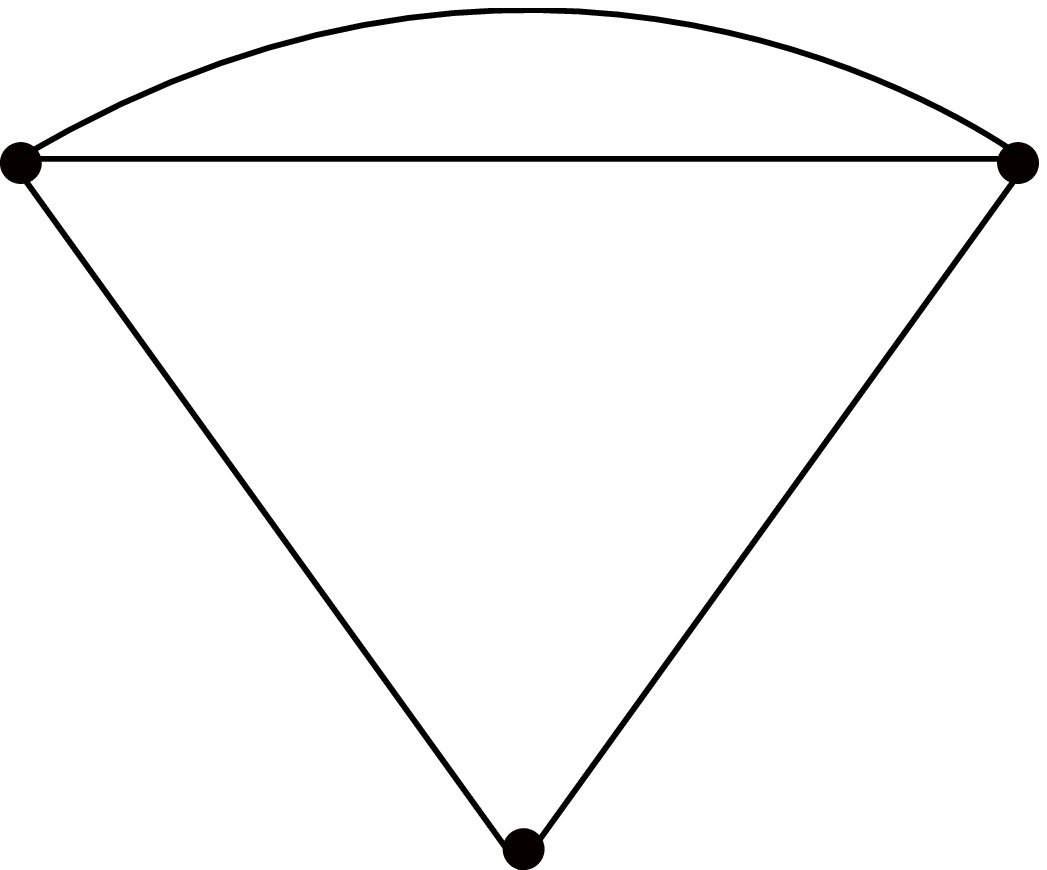} 
\\[12pt]
\includegraphics[scale=.25]{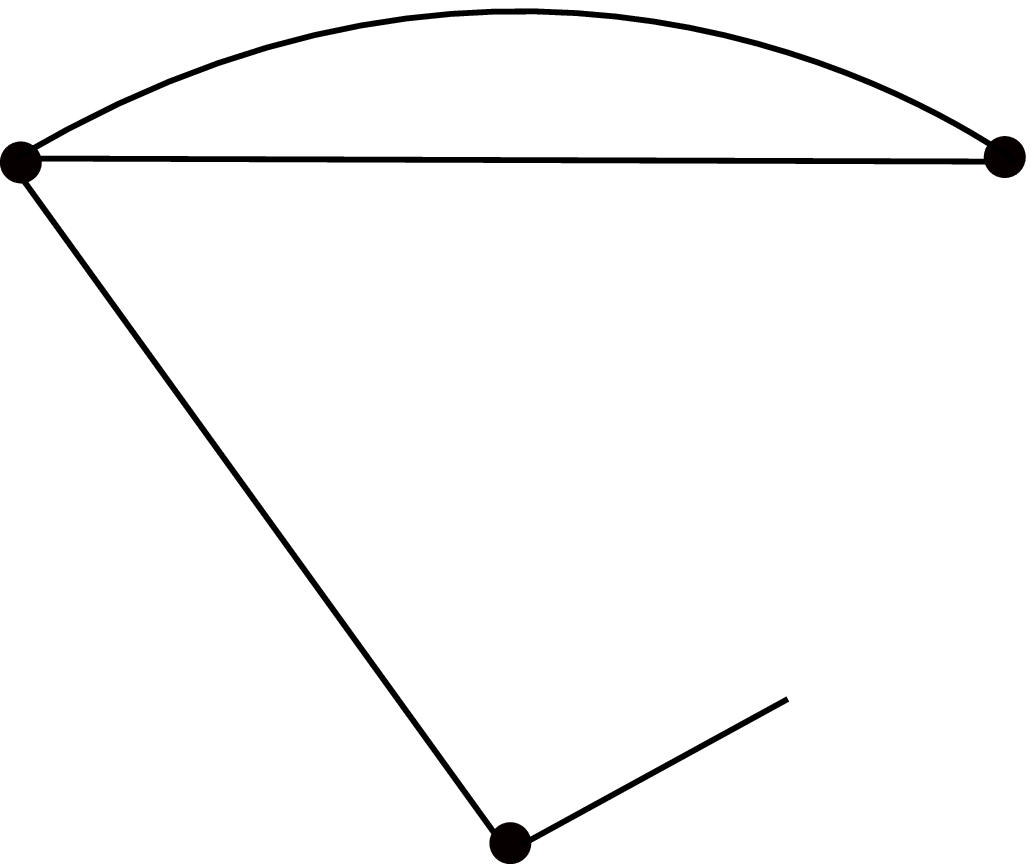} \hspace{1cm} 
&
\includegraphics[scale=.25]{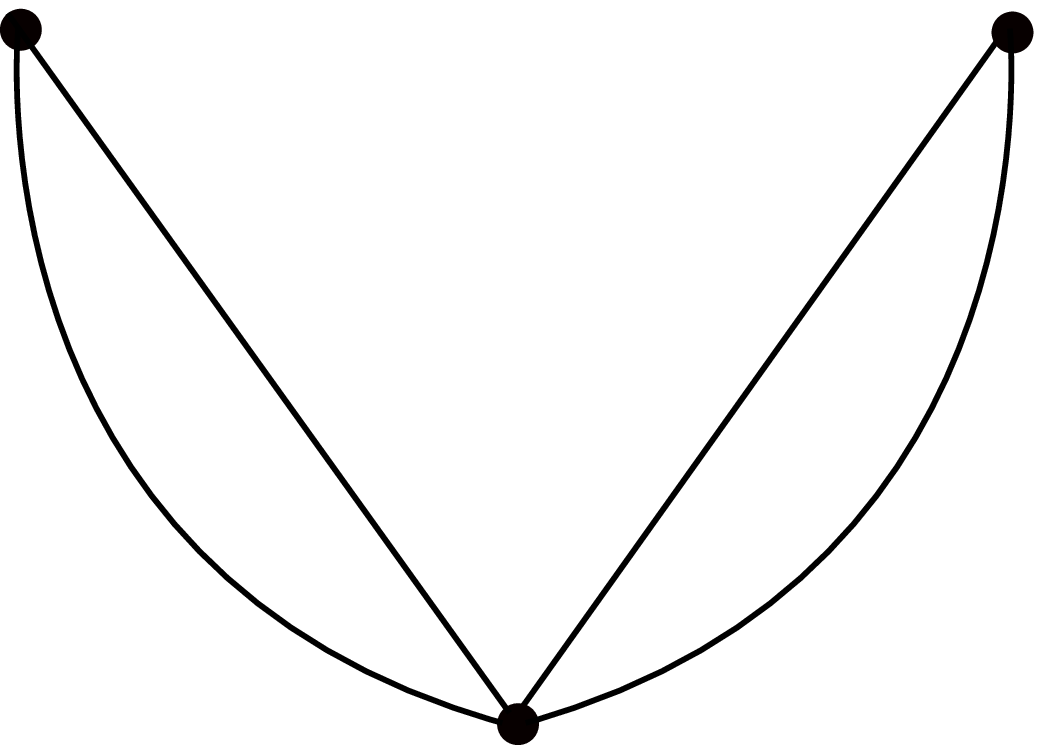} \hspace{1cm} 
&
\raisebox{30pt}{\includegraphics[scale=.25]{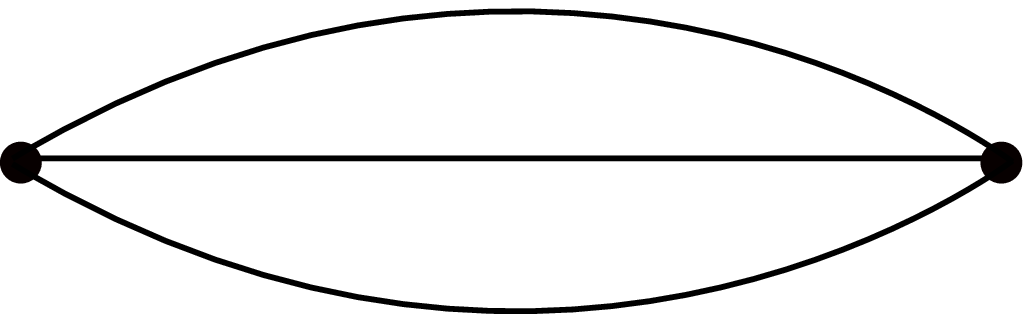}} 
\\[15pt]
\raisebox{30pt}{\includegraphics[scale=.25]{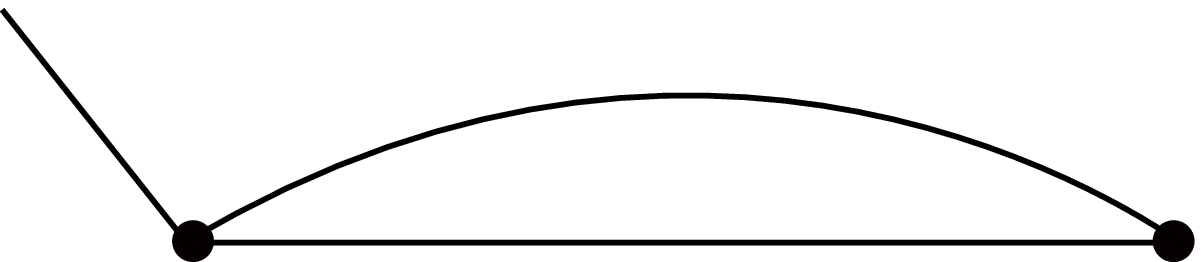}} \hspace{1cm} 
&
\raisebox{12pt}{\includegraphics[scale=.25]{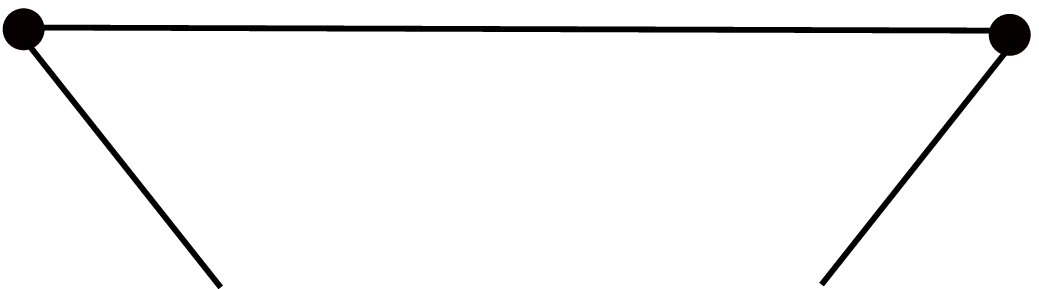}} \hspace{1cm} 
&
\includegraphics[scale=.25]{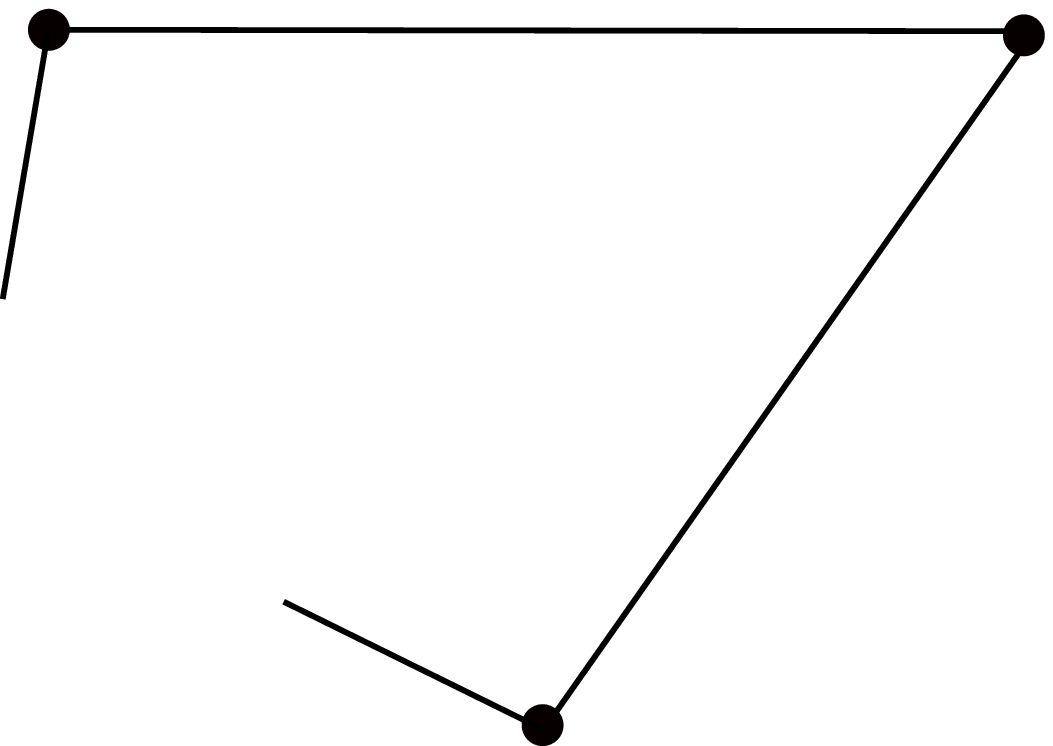} 
\end{tabular}
\bigskip
\caption{The frame circuits that may exist in a simply biased graph of order 3, such as a full biased expansion of $K_3$.  The circles are unbalanced except for the triangle in the upper left.}
\label{F:framecirc3}
\end{center}
\end{figure}

Second, the \emph{extended lift matroid} $L_0(\Omega)$ (also called the \emph {complete lift matroid}), whose ground set is $E(\Omega) \cup \{e_0\}$, that is, $E(\Omega)$ with an extra element $e_0$.  A \emph{lift circuit}, which is a circuit of $L_0(\Omega)$, is either a balanced circle, a theta graph that has no balanced circle, two unbalanced figures connected at one node, two unbalanced figures without common nodes, or an unbalanced figure and $e_0$.  
The lift circuits of a simply biased graph of order 3 are shown in Figure \ref{F:liftcirc3}.  
The \emph{lift matroid} $L(\Omega)$ is $L_0(\Omega) \setminus e_0$.  The rank function in $L_0(\Omega)$, for $S \subseteq E(\Omega) \cup \{e_0\}$, is $\rk S = \#N - c(S)$ if $S$ is balanced and does not contain $e_0$, and $= \#N - c(S) + 1$ if $S$ is unbalanced or contains $e_0$.  
Thus, $L(\Omega)$ has rank 3 if $\Omega$ has order 3 and is connected and unbalanced; $L_0(\Omega)$ has rank 3 if $\Omega$ has order 3 and is connected.

\begin{figure}[htbp]
\begin{center}
\begin{tabular}{ccc}
\includegraphics[scale=.25]{Biased_Graph_1.eps} \hspace{1cm} 
&
\includegraphics[scale=.25]{Biased_Graph_2.eps} \hspace{1cm} 
&
\includegraphics[scale=.25]{Biased_Graph_3.eps} 
\\[12pt]
\includegraphics[scale=.25]{Biased_Graph_5.eps} \hspace{1cm} 
&
\includegraphics[scale=.25]{Biased_Graph_7.eps} \hspace{1cm} 
&
\raisebox{30pt}{\includegraphics[scale=.25]{Biased_Graph_6.eps}} 
\\[15pt]
\raisebox{15pt}{\includegraphics[scale=.25]{Biased_Graph_4.eps}} \hspace{1cm} 
&
\includegraphics[scale=.25]{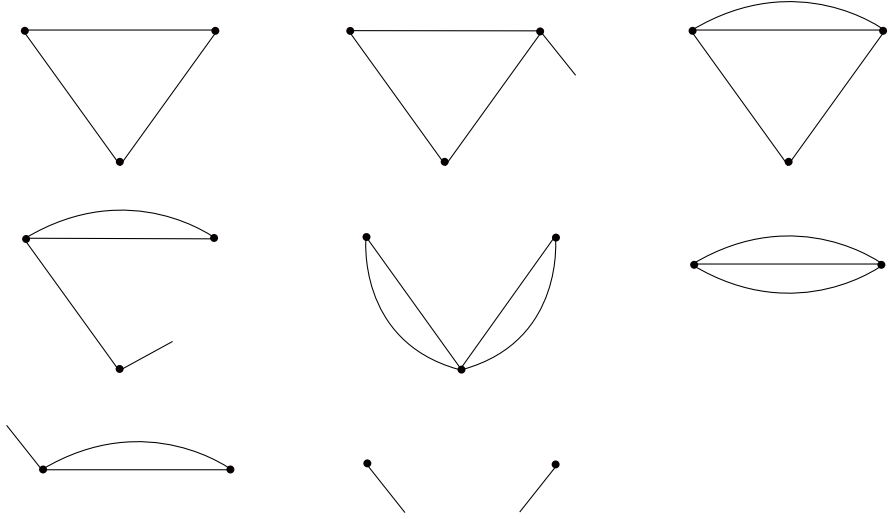} \hspace{1cm} 
\end{tabular}
\bigskip
\caption{The lift circuits that may exist in a simply biased graph of order 3.  The circles are unbalanced except for the triangle the upper left.}
\label{F:liftcirc3}
\end{center}
\end{figure}

When $\Omega$ is balanced, $G(\Omega)=L(\Omega)=G(\|\Omega\|)$, the graphic matroid, and $L_0(\Omega)$ is isomorphic to $G(\|\Omega\| \cup K_2)$ (a disjoint or one-point union).
When $\Omega$ has $\#N\leq3$, is simply biased, and has no half edges, then $G(\Omega)=L(\Omega)$; that is, there is only one matroid. By contrast, even with those assumptions normally $\G(\Omega)\neq L_0(\Omega)$.

A (vector or projective) representation of the frame or lift matroid of $\Omega$ is called, respectively, a \emph{frame representation} (a ``bias representation'' in \cite[Part IV]{BG}) or a \emph{lift representation} of $\Omega$.  An \emph{embedding} is a representation that is injective; as all the matroids in this paper are simple, ``embedding'' is merely a short synonym for ``representation''.  
A frame or lift representation is \emph{canonical} if it extends to a representation of $\G(\Omega)$ (if a frame representation) or $L_0(\Omega)$ (if a lift representation).  A canonical representation, in suitable coordinates, has especially simple representing vectors \cite[Sections IV.2.1 and IV.4.1]{BG} that we shall explain as needed.  

Like projective planes among projective geometries, biased expansions of $K_3$ are peculiar among biased expansions.  Since $\Omega=\gamma\cdot K_3$ has no node-disjoint circles, its frame and lift matroids are identical; consequently, its frame and lift representations are also identical.  Nevertheless, $\Omega$ has two different kinds of projective representation (when $\gamma>1$) because its canonical frame and lift representations are not identical.  

\begin{prop}\label{L:canonical}
Every projective representation of a nontrivial biased expansion $\Omega$ of $K_3$ (that is, of the matroid $G(\Omega) = L(\Omega)$) extends to a representation of either $L_0(\Omega)$ or $\G(\Omega)$, but not both.  
\end{prop}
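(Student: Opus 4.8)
The plan is to read a ``line diagram'' off an arbitrary plane representation of $M := G(\Omega) = L(\Omega)$ and then let a projective dichotomy decide which coextension is realized. Fix a projective representation $\phi$ of $M$; since a nontrivial biased expansion ($\gamma \geq 2$) is connected and unbalanced, $\rk M = 3$, so the image spans a projective plane $\PP$. First I would show that each parallel class $p\inv(e_{ij})$ lies on a single line $\ell_{ij}$: any two edges of a class form an unbalanced digon and are independent (distinct points), but any three parallel edges have frame/lift rank $2$, so $\phi$ maps the whole class into one line. Next I would check that $\ell_{12},\ell_{13},\ell_{23}$ are pairwise distinct, since $\ell_{ij}=\ell_{ik}$, combined with the collinearity forced by balanced triangles, would collapse all of $E$ onto a single line and contradict $\rk M = 3$. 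Three distinct lines of $\PP$ obey a clean dichotomy: they are either \emph{concurrent} (one common point) or form a \emph{triangle} (three distinct pairwise intersections), and these alternatives are mutually exclusive and exhaustive.

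The two alternatives produce the two coextensions. In the concurrent case, with common point $o$, extend $\phi$ by $e_0 \mapsto o$; in the triangle case, with $p_i := \ell_{ij}\cap\ell_{ik}$, extend $\phi$ by sending the half edge at $v_i$ to $p_i$. These placements are \emph{forced} by the circuit lists in Figures \ref{F:liftcirc3} and \ref{F:framecirc3}: an unbalanced digon in class $ij$ together with $e_0$ is a lift circuit, so $e_0$ must lie on every $\ell_{ij}$, i.e.\ at $o$; likewise a half edge at $v_i$ together with an unbalanced digon through $v_i$ is a frame circuit, forcing that half edge onto both $\ell_{ij}$ and $\ell_{ik}$, i.e.\ to $p_i$. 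The same forcing yields mutual exclusivity immediately: a lift extension requires all three lines through $e_0$ (concurrency), whereas a frame extension requires the three points $p_i$ to be distinct (a triangle), and no plane satisfies both. Hence at most one coextension extends $\phi$, while the dichotomy guarantees that at least one candidate placement exists.

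It remains to confirm that the forced placement is an \emph{honest} representation of $L_0(\Omega)$ (resp.\ $\G(\Omega)$), not merely consistent with the digon circuits. Here I would first prove the separation lemma that no edge point coincides with a special point: if some $\phi(\te_{ij})$ equalled $o$ (resp.\ a vertex $p_i$), then running the balanced triangles through $\te_{ij}$ and using $\gamma \geq 2$ would force two distinct edges of a third class onto the same point of $\ell_{ik}$, contradicting that same-class edges are distinct points. Granting this, I would verify the representation through the closure characterization: in $L_0(\Omega)$ one has $e_0 \in \operatorname{cl}(X)$ exactly when $X$ is unbalanced, and correspondingly $o \in \overline{\phi(X)}$ in $\PP$ --- for unbalanced $X$ of full rank this is automatic, and for the low-rank unbalanced sets (those living in a single class) it is exactly the concurrency $o \in \ell_{ij}$; crucially, for \emph{balanced} $X$ the separation lemma prevents $o$ from accidentally lying in $\overline{\phi(X)}$. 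The analogous verification handles the three half edges and $\G(\Omega)$.

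I expect the main obstacle to be precisely this last verification --- ruling out spurious dependencies created by the newly placed points. The collinearity constraints that force the placement are easy bookkeeping from the circuit figures; the real content is that the placement introduces no unwanted incidences, and the entire argument funnels into the separation lemma that edge points avoid the triangle vertices and the point of concurrency, which is exactly where nontriviality ($\gamma \geq 2$) is indispensable.
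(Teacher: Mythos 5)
Your proposal is correct and takes essentially the same approach as the paper: the paper's entire proof is the observation that the three edge lines are either concurrent (yielding the extended-lift representation) or not (yielding the full-frame representation), with everything else declared obvious. Your write-up simply supplies the verifications the paper leaves implicit --- the forced placement of $e_0$ or the half edges, the separation lemma, and the closure check --- all consistent with the paper's dichotomy.
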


Proposition \ref{L:canonical} is obvious.  Three lines in a plane are concurrent in a point, or not.  
The former case, the extended lift representation, occurs when the edge lines are concurrent, and the latter, the full frame representation, otherwise.  (The \emph{edge lines} are the lines determined by the points representing the three edge sets $p\inv(e_{ij})$.)  The reader may recognize this as similar to $3$-nets; in fact, the former case is dual to an affine $3$-net and the latter to a triangular $3$-net.  

The next result explains the significance of canonical representation and the importance of a biased graph's having only canonical representations.  It gives algebraic criteria for representability of matroids of biased expansion graphs in Desarguesian projective geometries.  This paper develops analogs for non-Desarguesian planes.

\begin{thm}\label{T:canonicalrep}
Let $\Omega$ be a biased graph and $\fF$ a skew field.  
\begin{enumerate}[{\rm(i)}]
\item The frame matroid $G(\Omega)$ has a canonical representation in a projective space over $\fF$ if and only if $\Omega$ has gains in the group $\fF^\times$.  
\item The lift matroid $L(\Omega)$ has a canonical representation in a projective space over $\fF$ if and only if $\Omega$ has gains in the group $\fF^+$.  
\end{enumerate}
\end{thm}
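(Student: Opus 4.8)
The plan is to prove each equivalence by two matching constructions: from a gain function I build the standard canonical representation, and from an arbitrary canonical representation I read a gain function back off. I carry out part~(i) in full and indicate the parallel changes for part~(ii), where the multiplicative group $\fF^\times$ is replaced by the additive group $\fF^+$ and the special element $e_0$ plays the role that the half-edge frame plays for the frame matroid. For sufficiency in~(i), suppose $\Omega$ carries a gain function $\phi\colon E(\Omega)\to\fF^\times$ with $\cB(\Omega)=\{C:\phi(C)=1\}$. Working in $\bbP^{\#N-1}(\fF)$ with standard basis $(b_v)_{v\in N}$, I would represent the half edge $e_v$ at each node of $\Omega\full$ by $b_v$ and each link $e_{uv}$, oriented $u\to v$ with $g:=\phi(e_{uv})$, by $b_u-g\,b_v$. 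The one computation that matters is that, for a circle $C=e_1\cdots e_k$ with gains $g_1,\dots,g_k$, the representing vectors are dependent exactly when the product of the gains around $C$ equals $1$, hence exactly when $C$ is balanced; an unbalanced circle therefore stays independent. To see that these vectors realize $\G(\Omega)$ I would check the rank identity $\rk S=\#N-b(S)$ for all $S\subseteq E(\Omega\full)$, which simultaneously handles the remaining frame-circuit types (a theta with no balanced circle, and two unbalanced figures joined at a node or along a path). Restriction to $E(\Omega)$ is the desired canonical representation of $G(\Omega)$.

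For necessity in~(i), extend the given canonical representation to one of $\G(\Omega)$. The half edges form a basis of $\G(\Omega)$: they are independent, since no two node-disjoint unbalanced figures form a frame circuit, and they span, since $\rk\G(\Omega)=\#N$. Taking the points $b_v$ that represent the $e_v$ as coordinate frame, for each link $e_{uv}$ the triple $\{e_u,e_v,e_{uv}\}$ is a frame circuit (two half edges joined by the path $e_{uv}$), so its three points are dependent with every two independent; thus $e_{uv}$ is represented by $\alpha b_u+\beta b_v$ with $\alpha,\beta\neq0$, and after rescaling I set $\phi(e_{uv}):=-\beta/\alpha$ and $\phi(e_{vu}):=\phi(e_{uv})\inv$. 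This is well defined once the frame is fixed. The circle computation above then gives, for every circle $C$, that $\phi(C)=1$ iff its points are dependent iff $C$ is a circuit of $G(\Omega)$ iff $C$ is balanced; hence $\cB(\Omega)=\{C:\phi(C)=1\}$, so $\Omega$ has gains in $\fF^\times$.

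Part~(ii) runs in parallel. For sufficiency, given $\phi\colon E(\Omega)\to\fF^+$ realizing $\cB(\Omega)$, I would adjoin a coordinate $d$, represent $e_0$ by $d$ and each link $e_{uv}$ by $b_u-b_v+\phi(e_{uv})\,d$, so that the vectors of a circle are dependent iff its gains sum to $0$, i.e.\ iff it is balanced, and then verify the lift rank formula to identify the configuration with $L_0(\Omega)$. For necessity, a spanning tree $T$ together with $e_0$ is a basis of $L_0(\Omega)$; after normalizing (switching) so that every tree edge is represented with zero $d$-coordinate, each remaining link $e_{uv}$ is represented by $b_u-b_v+\beta\,d$ with nonzero $b_u-b_v$ coefficient---otherwise $e_{uv}$ would be parallel to $e_0$, which simplicity forbids---and $\phi(e_{uv}):=\beta$ is the additive gain whose zero-sum circles are exactly the balanced ones.

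I expect the main obstacle to be the sufficiency verification in each part: one must confirm not merely that every listed circuit type becomes dependent but also that no spurious dependence is introduced, and the cleanest uniform way to do this is to establish the rank identities $\rk S=\#N-b(S)$ (frame) and the corresponding lift formula for every $S$. The necessity directions are comparatively soft; the only care needed is the bookkeeping of projective rescaling and edge orientation so that the extracted $\phi$ is an orientation-consistent, well-defined group-valued function, and once the frame (respectively the vector $d=e_0$) is fixed this is routine.
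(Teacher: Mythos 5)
Your part (i), and the sufficiency halves of both parts, follow essentially the route of the results the paper invokes (the paper's own ``proof'' is nothing but a citation of \cite[Theorem IV.2.1, Proposition IV.2.4, Theorem IV.4.1, Proposition IV.4.3]{BG}, so you are reconstructing that outsourced content). These portions are sound as a sketch: the circle-dependence computation is the right engine, the deferred verification of $\rk S=\#N-b(S)$ is the correct remaining task, and in the frame case the necessity argument is airtight for a structural reason worth making explicit: every link $e_{uv}$ lies on the $3$-circuit $\{e_u,e_v,e_{uv}\}$, so its representing vector is forced into the span of the \emph{two} half-edge basis vectors $b_u,b_v$ with both coefficients nonzero, and the gain can be read off locally.

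The genuine gap is in the necessity half of part (ii), precisely where that structural reason is absent. With basis $T\cup\{e_0\}$, a non-tree link is only constrained to lie in the span of its fundamental circuit, which may be long, and nothing in your argument forces its vector to have the form $\alpha(b_u-b_v)+\beta d$ relative to node vectors $b_v$ built from the tree-edge representatives (which you never actually define). Concretely, take $\Omega$ a balanced triangle on $u,w,v$ with tree edges $f_1=e_{uw}$, $f_2=e_{wv}$ and non-tree edge $e=e_{uv}$, and represent $L_0(\Omega)$ by $x_{f_1}=(1,0,0)$, $x_{f_2}=(0,1,0)$, $x_e=(1,2,0)$, $x_{e_0}=(0,0,1)$: this is a valid representation, the tree edges already have zero $d$-coordinate, yet $x_e$ is \emph{not} of the form $\alpha(b_u-b_v)+\beta d$ for the node vectors determined by $x_{f_1},x_{f_2}$, so your rule $\phi(e_{uv}):=\beta$ is undefined. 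The operation you call switching (translating node vectors by multiples of $d$) cannot repair this, because it changes nothing modulo $d$; the issue you do address (the coefficient of $b_u-b_v$ vanishing, i.e.\ parallelism with $e_0$) is not the real danger. What is actually needed is a consistent rescaling of the tree-edge representatives making all fundamental-circuit coefficients equal simultaneously---equivalently, the fact that the induced representation of $L_0(\Omega)/e_0=G(\|\Omega\|)$, a graphic and hence regular matroid, is projectively equivalent to the standard one. That unique-representability step (or Zaslavsky's gain-theoretic substitute for it in \cite[Proposition IV.4.3 and Theorem IV.7.1]{BG}) is the nontrivial ingredient your proposal silently assumes; once it is supplied, your extraction of additive gains and the concluding circle computation go through.
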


\begin{proof}
The first part is a combination of \cite[Theorem IV.2.1 and Proposition IV.2.4]{BG}.  The second part is a combination of \cite[Theorem IV.4.1 and Proposition IV.4.3]{BG}. 
\end{proof}


\section{Planes, nets, and their matroids}\label{planes}

\subsection{Quasigroup matroids and their planar representations}\label{qmatroids}\

Consider the representation problem for a quasigroup expansion $\fQ K_3$.  If the quasigroup $\fQ$ is a multiplicative subgroup of a skew field $\fF$, then the full frame matroid of this expansion graph is linearly representable over $\fF$ (\cite{CGL} or \cite[Theorem IV.2.1]{BG}).  Similarly, if $\fQ$  is an additive subgroup of $\fF$, then the extended lift matroid of $\fQ K_3$ is linearly representable over $\fF$ \cite[Theorem IV.4.1]{BG}.  In each case, the matroid embeds in a projective space coordinatizable over $\fF$, so $\fQ$'s being a subgroup of $\fF^\times$ or $\fF^+$ is a sufficient condition for representability of the full frame matroid or the extended lift matroid, respectively, in a Desarguesian projective space.  It is also a necessary condition (if $\fQ$ is a loop, which is achievable by isotopy); that was proved by Dowling \cite{CGL} for the multiplicative case and follows from \cite[Theorem IV.7.1]{BG} in both cases.

The natural question for expansions of $K_3$, whose full frame and extended lift matroids have rank 3, is the same: necessary and sufficient conditions for when the matroid is representable in a projective plane.  But since vector representability is not general enough for projective-planar representability, we need non-Desarguesian analogs of the algebraic coordinatizability criteria we know for the Desarguesian case; that means criteria for projective representability in terms of a ternary ring associated to the projective plane (cf.\ Section \ref{ternary}).

\subsection{Ternary rings}\label{ternary}\

A ternary ring is the algebraic structure that is necessary and sufficient for coordinatization of a projective plane.  It is known that given a projective plane there is a ternary ring that coordinatizes it and vice versa.  Let $\fR$ be a set with a ternary operation $t$.  
Then $\fT = (\fR,t)$ is a \emph{ternary ring} (\cite{st}; also ``ternary field'' \cite{Demb, Hall}, ``planar ternary ring'' \cite{HP}, ``Hall ternary ring'' \cite{Martin}) if it satisfies T\ref{TRx}--T\ref{TR01}.  
\begin{enumerate}[\ T1.]
\item \  Given $a,b,c,d \in \fR$ such that $a \neq c$, there exists a unique $x \in \fR$ such that $t(x,a,b)=t(x,c,d).$
\label{TRx}
\item \  Given $a,b,c \in \fR,$ there exists a unique $x \in \fR$ such that $t(a,b,x) = c.$
\label{TRb}
\item \  Given $a,b,c,d \in \fR$ such that $a \neq c$, there exists a unique pair $(x,y) \in \fR \times \fR$ such that $t(a,x,y) = b$ and $t(c,x,y)=d.$
\label{TRmb}
\item \  There exist elements $0,1 \in \fR$ such that $ 0 \neq 1$, $t(0,a,b)=t(a,0,b)=b,$ and $t(1,a,0)=t(a,1,0)=a.$
\label{TR01}
\end{enumerate}
See any of \cite{Demb, Hall, HP, st} for exposition and proofs about ternary rings.

The \emph{additive loop} of a ternary ring $\fT$ is $\fT^+ := (\fR,+)$ where $a+b:= t(1,a,b)$; it is a loop with identity $0$.\footnote{This is Stevenson's definition \cite{st}.  Hall \cite{Hall} and Dembowski \cite{Demb} define $a+b:=t(a,1,b)$.  There is some reason to think their definition could simplify some of our results; in particular, it might change $\fT^*$ to $\fT$.  We have not explored the effect of changing the definition.}  
The \emph{multiplicative loop} of $\fT$ is $\fT^\times := (\fR \setminus {0},\times)$ where $a \times b:= t(a,b,0)$; it is a loop with identity $1$. 
A ternary ring is \emph{linear} if $t(a,b,c) = (a \times b)+c$.

Let $\diamond$ be the binary operation over $\fR\setminus {0}$ defined by $x \diamond y := z$ for $z \in \fR \setminus {0}$ such that $t(x,y,z) =0$.  Thus, $\diamond = (\times^*)\opp$, i.e., $x\times y + y\times^*x = 0$.  Define 
$$
\fT^{\diamond} = (\fR\setminus {0}, \diamond).
$$
Then $\fT^{\diamond} = (\fT^*)^\times$ so $\fT^\diamond$ is a quasigroup.  Sometimes it is not new.

\begin{prop} \label{P:Tlinear}
If $\fT$ is a linear ternary ring, $\fT^{\diamond}$ and $\fT^\times$ are isotopic.
\end{prop}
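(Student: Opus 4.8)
The plan is to use linearity to express $\diamond$ purely in terms of $\times$ and additive inversion, and then to read off an explicit isotopy. Since $\fT$ is linear, $t(x,y,z)=(x\times y)+z$ for all $x,y,z\in\fR$. By definition $x\diamond y$ is the element $z\in\fR\setminus\{0\}$ with $t(x,y,z)=0$, so linearity converts this into the equation $(x\times y)+z=0$; in other words $x\diamond y$ is the additive right inverse of the product $x\times y$.

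First I would make the inversion precise. Because $\fT^+=(\fR,+)$ is a loop, for each $a$ the equation $a+x=0$ has a unique solution, which I call $\nu(a)$; thus $x\diamond y=\nu(x\times y)$. I would then verify that $\nu$ is a bijection of $\fR$: if $\nu(a)=\nu(b)=c$ then $a+c=0=b+c$, and unique solvability of $x+c=0$ forces $a=b$ (injectivity), while for any $c$ that same unique solvability produces an $a$ with $a+c=0$, i.e.\ $\nu(a)=c$ (surjectivity). Since $0+\nu(0)=0$ gives $\nu(0)=0$, the bijection $\nu$ fixes $0$ and hence restricts to a bijection of $\fR\setminus\{0\}$. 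Moreover $x\times y\in\fR\setminus\{0\}$ whenever $x,y\in\fR\setminus\{0\}$, because $\times$ is the operation of the loop $\fT^\times$ on $\fR\setminus\{0\}$, so the formula $x\diamond y=\nu(x\times y)$ stays within $\fR\setminus\{0\}$.

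With this in hand the isotopy is immediate: taking $\alpha=\beta=\mathrm{id}_{\fR\setminus\{0\}}$ and $\gamma=\nu|_{\fR\setminus\{0\}}$ gives $\alpha(x)\diamond\beta(y)=x\diamond y=\nu(x\times y)=\gamma(x\times y)$, which is exactly the defining relation of an isotopy between $\fT^\times$ and $\fT^\diamond$. The only point requiring care is that $+$ is merely a loop rather than a group, so the additive ``inverse'' map $\nu$ must be produced and shown bijective through the unique-solvability (left-division) axiom of the loop instead of by group inverses; the left/right inverse distinction is harmless here, since only the one-sided inverse occurring in $(x\times y)+z=0$ is needed. I expect this verification that $\nu$ is a well-defined bijection to be essentially the whole content, the isotopy itself then being a one-line substitution.
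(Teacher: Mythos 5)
Your proof is correct and takes essentially the same approach as the paper: both exhibit an explicit principal isotopy with two identity maps and a third map given by additive inversion in the loop $\fT^+$, using linearity to rewrite $t(x,y,z)=0$ as $(x\times y)+z=0$. The only cosmetic difference is that your $\nu$ (right inverse, giving $\alpha(x)\diamond\beta(y)=\gamma(x\times y)$) is the inverse of the paper's $\gamma$ (left inverse, giving $\alpha(x)\times\beta(y)=\gamma(x\diamond y)$), so the two isotopies run in opposite directions, which is immaterial since isotopy is symmetric; your extra verification that $\nu$ is a $0$-fixing bijection is a point the paper asserts more briefly.
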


\begin{proof} 
The isotopisms $\alpha, \beta : \fT^{\diamond} \to \fT^\times$ are identity mappings but $\gamma(x) := y$ for $y \in \fR $ such that $y+x=0$.  $\gamma$ is a bijection between $\fR$ and itself with $\gamma(0) = 0$ because $\fT^+$ is a loop with identity $0$.  

Suppose that $x \diamond y = z$, that is, $t(x,y,z) = 0$. Since $\fT$ is linear, $(x\times y)+z = 0$. This implies that $\gamma(x \diamond y) = x\times y$.
\end{proof}

The \emph{dual ternary ring} $\fT^*$ has the same set as $\fT$ with ternary operation $t^*$ defined by $t^*(a,b,c)=d \iff t(b,a,d)=c$.  The 0 and 1 are the same as in $\fT$.  The dual binary operations are $+^*$ and $\times^*$, defined by $x +^* y := t^*(1,x,y)$ and $x \times^* y := t^*(x,y,0)$.  
Since $x+^*y = z \iff x+z = y$, dual addition is in effect a reverse primal subtraction.  
The primal and dual multiplications are related by $t^*(x,y,y\times x) = 0$; in terms of the diamond operation, $x\times^*y = y\diamond x$, or 
\begin{equation}
\times^* = \diamond\opp.
\label{E:diamonddual}
\end{equation}
If $\fT$ is linear, $(y\times x) + (x\times^*y) = 0$.

\subsection{Projective planes}\label{pp}\

For every ternary ring $\fT$, there is an associated projective plane, and vice versa.  (See \cite[Section 8.3]{Latin}, \cite[Section V.3]{HP}, or \cite[Section 9.2, esp.\ Theorem 9.2.2 and Construction 9.2.4]{st}, from which we take our presentation.)  

The plane associated to $\fT$ is the projective plane $\PP_\fT=(\cP,\cL)$ defined by   
\begin{align*}
\cP \ = \ & \big\{ [x, y], [x], \hz: x,y \in \fR \big\}, \text{ the set of \emph{points},} \\ 
\cL \ = \ & \big\{ \langle x ,y  \rangle,  \langle x  \rangle,  \hZ: x,y \in \fR \big \},  \text{ the set of \emph{lines}, }
\end{align*}
with collinearity given by
\begin{equation}
\begin{aligned}{}
[x, y] &\in \langle m,k  \rangle \iff t(x,m,k) = y , \\ 
[x] &\in \langle x,y  \rangle , \\ 
[x,y], \hz &\in \langle x \rangle , \\ 
[x], \hz &\in \hZ , 
\end{aligned}
\label{E:coordinates}
\end{equation}
for $x,y,m,k \in \fR$.  The plane is said to be \emph{coordinatized by $\fT$}.  We call the point and line coordinates just defined the \emph{affine coordinates} in $\PP$ to distinguish them, in Sections \ref{q frame} and \ref{q lift}, from homogeneous coordinates over a skew field.

Conversely, given a projective plane $\PP$, one constructs a coordinatizing ternary ring $\fT(u,v,o,e) = (\fR,t)$ by choosing a quadruple of distinct points $u,v,o,e\in\PP$, no three collinear, taking $\fR$ to be a set in bijection with $ou \setminus \{u\}$, and defining $t(a,b,c)$ to correspond with a certain point on $ou$ determined by $a,b,c$ (cf.\ \cite[Construction 9.2.4]{st}, where $\fR$ is taken to be $ou \setminus \{u\}$, but that is not essential).  The principal properties of $\fT(u,v,o,e)$ are stated in Theorem \ref{pst}.  
(From now on we drop the notation $\fR$ and write $\fT$ for the underlying set of a ternary ring.)

\begin{thm}[{\cite[Section 9.2]{st}}] \label{pst} 
\begin{enumerate} [{\rm(a)}]
\item Let $u,v,o,e$ be points in a projective plane $\PP$, no three collinear. Then $\fT(u,v,o,e)$ is a ternary ring, in which $0 \lra o$ and $1 \lra i:= ve \wedge ou$.  The plane $\PP_{\fT(u,v,o,e)}$ is isomorphic to $\PP$.   The points in $ou\setminus \{u\}$, $ov\setminus \{v\}$, $iv\setminus \{ v \} $, and $uv\setminus \{v\}$ have, respectively, the coordinates $[x,0]$, $[0,x]$, $[1,x]$, and $[x]$ for $x \in \fT$.  In particular, $u,v,o,e,i$ have coordinates $[0], \hz, [0,0], [1,1], [1,0]$.  Some line coordinates are $ou = \langle0,0\rangle$, $ov = \langle0\rangle$, $ve = \langle1\rangle$, and $uv = \hZ$.
\label{pst:ptp}
\item Let $\fT$ be a ternary ring and $\PP = \PP_\fT$.  
If we define $u:=[0]$, $e:=[1,1]$, $v:=\hz$, $o:=[0,0]$, and in general $\fT\lra ou\setminus\{u\}$ by $x \lra [x,0]$, then $\fT(u,v,o,e) = \fT$.
\label{pst:tpt}
\end{enumerate}
\end{thm}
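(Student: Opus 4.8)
This is the classical Hall coordinatization theorem, so the plan is to show that the synthetic frame construction produces a ternary ring whose associated plane is $\PP$, and that the construction in part (b) inverts it. First I would fix the frame and read coordinates off it. Declare $uv=\hZ$ to be the line at infinity, with $v=\hz$ and $u=[0]$, and set $o=[0,0]$. Identify $\fR$ with $ou\setminus\{u\}$ by sending $o$ to $0$ and $i:=ve\wedge ou$ to $1$; a point of $ou\setminus\{u\}$ labelled $a$ becomes $[a,0]$. Project from $v$ and from $u$ to read the two coordinates of a general point, so that $vP\wedge ou$ gives the first coordinate and $uP\wedge ov$ the second; then every $P\notin uv$ acquires a label $[a,b]$ and every point of $uv$ a label $[a]$ or $\hz$. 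Using the two plane axioms I would check this labelling is a bijection of the points of $\PP$ onto $\cP$. I then define the operation synthetically: $\langle m,k\rangle$ is the line joining $[0,k]\in ov$ to the infinite point $[m]$, and $t(a,m,k)$ is the second coordinate of the affine point $\langle m,k\rangle\wedge\langle a\rangle$, where $\langle a\rangle$ is the vertical line through $[a,0]$ and $v$.

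Next I would verify T\ref{TRx}--T\ref{TR01}, each of which is the algebraic shadow of an incidence fact. Axiom T\ref{TRb} is ``two points determine a line'': $[a,c]$ and the infinite point $[b]$ are distinct and lie on the unique line $\langle b,x\rangle$, whose intercept $x$ solves $t(a,b,x)=c$. Axiom T\ref{TRx} is ``two lines meet in a unique point'': when $a\neq c$ the lines $\langle a,b\rangle$ and $\langle c,d\rangle$ have distinct infinite points $[a]\neq[c]$, hence meet in a unique affine point whose first coordinate is the required $x$. Axiom T\ref{TRmb} is again ``two points determine a line'': when $a\neq c$ the affine points $[a,b]$ and $[c,d]$ have distinct first coordinates, so determine a unique non-vertical line $\langle x,y\rangle$, giving the unique pair. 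Axiom T\ref{TR01} is read off from the frame, the four identities being the defining incidences of the axes $ou,ov$, the diagonal $oe$, and $ve$; here $0\neq1$ because $i\neq o$, which is exactly where the hypothesis that no three frame points are collinear enters (if $o\in ve$ then $o,v,e$ would be collinear).

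It remains to assemble the isomorphism and invert the construction. The labelling map is, by construction, an incidence-preserving bijection of $\PP$ onto $\PP_{\fT(u,v,o,e)}$, which is the asserted isomorphism; the stated coordinates of $u,v,o,e,i$ and of $ou,ov,ve,uv$ then follow directly from \eqref{E:coordinates} and the frame choices. For part (b) I would start from $\fT$, build $\PP_\fT$, take the natural frame $u=[0]$, $v=\hz$, $o=[0,0]$, $e=[1,1]$ (no three collinear, by a short check against \eqref{E:coordinates}), and rerun construction (a): the bijection $ou\setminus\{u\}\lra\fR$ is $[x,0]\mapsto x$, and unwinding the synthetic definition of the recovered operation $t'$ through \eqref{E:coordinates} gives $t'=t$, so $\fT(u,v,o,e)=\fT$.

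I expect the main obstacle to be not any single computation but the uniform bookkeeping: checking that the coordinate assignment is well defined and bijective, and, at each appeal to a plane axiom in T\ref{TRx}--T\ref{TRmb}, confirming that the relevant points or lines are genuinely distinct. That non-degeneracy is precisely what the ``no three collinear'' hypothesis supplies---it forces $o$ to be affine, $i\neq o$, and $e$ to lie off every axis---so the most delicate part is tracking exactly where that hypothesis is consumed.
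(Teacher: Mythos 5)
The paper never proves Theorem \ref{pst}: it is quoted as a classical result (Hall's coordinatization theorem) directly from Stevenson \cite[Section 9.2]{st}, so the only meaningful comparison is with that standard construction, whose outline your sketch does follow. Your reduction of T\ref{TRx}, T\ref{TRb}, T\ref{TRmb} to the two incidence axioms (``two points determine a line,'' ``two lines meet in a unique point'') is correct, as is your accounting of where the ``no three collinear'' hypothesis is consumed.

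There is, however, a genuine gap in the coordinate assignment, and it sits at the definitional core of the theorem rather than in the bookkeeping. You identify $\fR$ with $ou\setminus\{u\}$, but then declare the second coordinate of an affine point $P$ to be $uP\wedge ov$, which is a point of $ov$, not an element of $\fR$; likewise your definition of $t(a,m,k)$ invokes the symbols $[0,k]\in ov$ and $[m]\in uv$ for $k,m\in\fR$, i.e., bijections $\fR\to ov\setminus\{v\}$ and $\fR\to uv\setminus\{v\}$ that you never construct. These transfer maps are exactly what Hall's construction supplies: one first labels the diagonal by $oe=\{[x,x]: x\in\fR\}$ via vertical projection to $ou$, then sets the second coordinate of $P$ to be $v(uP\wedge oe)\wedge ou$ (which in particular labels the points of $ov$ as $[0,k]$), and finally labels an ideal point $M\in uv\setminus\{v\}$ by $[m]$ where $m$ is the second coordinate of $oM\wedge ve$. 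Note that this is precisely where the unit point $e$ enters beyond fixing $i$: in your version $e$ influences nothing except the element $1\in\fR$, so nothing in your construction can force the unit laws $t(a,1,0)=t(1,a,0)=a$ of T\ref{TR01}; those identities hold exactly because the diagonal $oe$ and the unit line $ve$ are built into the labellings, so your claim that T\ref{TR01} is ``read off from the frame'' has no content until those labellings exist. The same omission touches part (b), where ``unwinding $t'=t$'' amounts to checking that the diagonal-based labelling of construction (a) reproduces the given coordinates of $\PP_\fT$ (e.g., that $u[a,b]$ meets the line $\langle 1,0\rangle=oe$ in $[b,b]$). The skeleton of your argument is the right classical one, but what is missing is the construction that makes the ternary operation well defined and the unit axioms true.
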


Note that in general $\fT(u,v,o,e)$ depends on the choice of $(u,v,o,e)$.

\begin{lem} \label{collinearpointsinaplane}
Let $\fT$ be a ternary ring and $w,x,y,m \in \fT$.  The points $[0, w], [x, y], [m] \in \PP_\fT$ are collinear if and only if $t(x, m, w) = y$.  Their common line is $\langle m, w \rangle$.  In particular, 
$$
[ \Tg],\ [ 0, \Th], \text{ and\/ } [ 1, \Tk ] \text{ are collinear if and only if\/ } t(1,\Tg,\Th )= \Tk, \text{ i.e., } \Tg+\Th=\Tk,
$$
and their common line is $\langle \Tg,\Th\rangle$; and 
$$
[ \Tg],\ [\Th,0 ], \text{ and\/ } [ 0, \Tk] \text{ are collinear if and only if\/ } t(\Th,\Tg,\Tk )= 0, \text{ i.e., } \Th\diamond\Tg=\Tk,
$$
with common line $\langle \Tg, \Tk \rangle$.
\end{lem}

\begin{proof} 
By Equation \eqref{E:coordinates} the point $[m] \in \langle m, w \rangle$ and, since $t(0,m,w)=w$, also $[0, w] \in \langle m, w \rangle.$  Finally, $[x, y] \in \langle m, w \rangle$ if and only if $t(x,m,w)= y$.

In the particular cases $m = \Tg$.  A point $[0,y]$ in the line implies that $w=y$, which gives the values of $w$ in each case.  The algebraic criterion for collinearity is inferred from the coordinates of the third point.
\end{proof}

\begin{lem}\label{intersectionofthreelines} 
Let $\fT$ be a ternary ring and let $\Tg$, $\Th$ and  $\Tk$ be nonzero elements of $\fT$.  
In $\PP_\fT$ the lines $\langle \Tg \rangle$, $\langle \Th, 0 \rangle$, and $\langle 0, \Tk \rangle$ intersect in a point if and only if $t(\Tg,\Th,0)=\Tk$, i.e., $\Tg \times \Th = \Tk$.  The point of intersection is $[\Tg, \Tk]$.
\end{lem}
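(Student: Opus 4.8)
The plan is to locate the unique common point of two of the three lines, then test whether the third passes through it. I would begin with $\langle \Tg \rangle$ and $\langle 0, \Tk \rangle$, whose intersection is forced independently of any hypothesis on the product. By Equation \eqref{E:coordinates} the line $\langle \Tg \rangle$ consists exactly of the points $[\Tg, y]$ with $y \in \fT$ together with $\hz$, and such a point $[\Tg, y]$ lies on $\langle 0, \Tk \rangle$ precisely when $t(\Tg, 0, \Tk) = y$. Axiom T\ref{TR01} gives $t(\Tg, 0, \Tk) = \Tk$, forcing $y = \Tk$, so these two lines meet in the single point $[\Tg, \Tk]$. (The point $\hz$ is not a common point, since by Equation \eqref{E:coordinates} it lies only on lines of the forms $\langle x \rangle$ and $\hZ$, not on $\langle 0, \Tk \rangle$.)

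Next I would invoke the projective-plane axioms: two distinct lines meet in exactly one point, so the three given lines intersect in a point (are concurrent) if and only if the remaining line $\langle \Th, 0 \rangle$ passes through $[\Tg, \Tk]$. By Equation \eqref{E:coordinates}, $[\Tg, \Tk] \in \langle \Th, 0 \rangle$ holds exactly when $t(\Tg, \Th, 0) = \Tk$, which is the statement $\Tg \times \Th = \Tk$ by the definition of the multiplicative loop $\fT^\times$. Combining the two steps yields simultaneously the stated criterion and the identification of the point of intersection as $[\Tg, \Tk]$.

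The only point requiring a brief check is that the three lines are genuinely distinct, so that ``concurrent'' is the correct reading of ``intersect in a point.'' Here the hypotheses that $\Tg, \Th, \Tk$ are nonzero do the work: $\langle \Tg \rangle$ is a line of type $\langle x \rangle$ whereas $\langle \Th, 0 \rangle$ and $\langle 0, \Tk \rangle$ are of type $\langle m, k \rangle$, so the first differs from the other two; and $\langle \Th, 0 \rangle \neq \langle 0, \Tk \rangle$ because their unique infinite points $[\Th]$ and $[0]$ differ when $\Th \neq 0$. I expect no genuine obstacle: this is a direct coordinate computation in the spirit of Lemma \ref{collinearpointsinaplane}, and the only care needed is to apply axiom T\ref{TR01} correctly and to record the distinctness of the three lines.
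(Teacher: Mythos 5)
Your proof is correct, and it follows the same overall strategy as the paper (intersect two of the lines, then test whether the third passes through that point), but with a different choice of the initial pair, which changes the algebra involved. The paper intersects $\langle \Th, 0 \rangle$ with $\langle 0, \Tk \rangle$ first: their common point is $[x,\Tk]$ where $x$ solves $x \times \Th = \Tk$, and this requires invoking the quasigroup property of $\fT^\times$ (with $\Th, \Tk \neq 0$) to get a unique solution; concurrence is then equivalent to $x = \Tg$. You instead intersect $\langle \Tg \rangle$ with $\langle 0, \Tk \rangle$, where axiom T\ref{TR01} gives $t(\Tg,0,\Tk) = \Tk$ outright, so the common point is $[\Tg,\Tk]$ with no solvability argument at all; the concurrence criterion $t(\Tg,\Th,0)=\Tk$ then falls out of the single membership test for the third line. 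Your route is slightly more elementary (it never uses that $\fT^\times$ is a quasigroup, and it identifies the intersection point $[\Tg,\Tk]$ unconditionally, not just under the concurrence hypothesis), and your explicit check that the three lines are pairwise distinct is a small point of rigor the paper leaves implicit. What the paper's ordering buys in exchange is a display of how the multiplicative quasigroup structure governs the pencil through $[x,\Tk]$, which is thematically closer to the net-theoretic use made of the lemma later, but nothing in the statement requires it.
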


\begin{proof} 
The lines $\langle \Th,0 \rangle$ and $\langle 0, \Tk \rangle$ are concurrent in the point $[x,y]$ such that $t(x,\Th,0)=y$ and $t(x,0,\Tk)=y$.  That is, $y=x\times\Th$ and $y=\Tk$, so the point is $[x,\Tk]$ where $x\times\Th=\Tk$.  This equation has a unique solution since $\Th,\Tk\neq0$ and $\fT^\times$ is a quasigroup.  It follows that $[x,\Tk]\in\langle\Tg\rangle \iff x=\Tg \iff \Tg\times\Th=\Tk$.  That establishes the condition for concurrence and the location of the concurrence. 
\end{proof}

The \emph{dual plane} of $\PP$ is the plane $\PP^*$ whose points and lines are, respectively, the lines and points of $\PP$.  The dual coordinate system to $u,v,o,e$ in $\PP$ is $o^*,u^*,v^*,e^*$ in $\PP^*$ given by
\begin{gather*}
o^*=ou, 	\qquad 	u^*=ov, 	\qquad 	v^*=uv, 	\qquad 	e^*=\langle1,1\rangle	
\intertext{($e^*$ is the line $[0,1][1]$ spanned by $[0,1]$ and $[1]$) as well as $i^*=oe$ and }
(ou)^*=o, 	\quad 	(ov)^*=u, 	\quad 	(uv)^*=v, 	\quad	(oe)^*=i.
\\\label{E:dualcoords}
\end{gather*}

With this dual coordinate system, $[x,y]^*=\langle x,y \rangle$, $[x]^*=\langle x \rangle$, $\hz^*=\hZ$, and $\PP_\fT^* = \PP_{\fT^*}$ (see Martin \cite{Martin}); that explains our definition of $t^*$.  

\subsubsection{Abstract $3$-nets}\label{absnets}\

An (abstract) \emph{$3$-net} $\cN$ is an incidence structure of points and lines that consists of three pairwise disjoint pencils of lines (a \emph{pencil} is a family of pairwise nonintersecting lines), $\cL_{12}$, $\cL_{23}$, and $\cL_{13}$, such that every point is incident with exactly one line in each pencil.  (See \cite[Section 8.1]{Latin}, \cite[p.\ 141]{Demb}, et al.  Note that our $3$-net is labelled since we have named the three pencils.  If we ignore the names it is unlabelled.)  A \emph{subnet} of $\cN$ is a $3$-net whose points and lines are points and lines of $\cN$.

There are correspondences among quasigroups, $3$-nets, and biased expansions of $K_3$.  Indeed the latter two are essentially the same.

In a $3$-net $\cN$ every pencil has the same number of lines.  Choose bijections $\beta_{ij}$ from the pencils to an arbitrary set $\fQ$ of the same cardinality and define $\Qg \cdot \Qh=\Qk$ in $\fQ$ to mean that $\beta_{12}\inv(\Qg), \beta_{23}\inv(\Qh), \beta_{13}\inv(\Qk)$ are concurrent.  This defines a quasigroup operation.  The same net gives many quasigroups due to the choice of bijections and labelling; the net with unlabelled parallel classes corresponds one-to-one not to $\fQ$ but to its isostrophe class, while the net with parallel classes labelled corresponds to the isotopy class of $\fQ$ (see \cite[Theorem 8.1.3]{Latin}).  We shall assume parallel classes are labelled.

Conversely, given a quasigroup $\fQ$ we can construct a unique $3$-net $\cN(\fQ)$ by taking three sets $\cL_{ij}:=\fQ\times\{ij\}$ for $ij=12,23,13$, whose elements we call lines and denote by $L_{ij}(\Qg)$ (meaning $(\Qg,ij)$), and defining lines $L_{12}(\Qg)$, $L_{23}(\Qh)$, and $L_{13}(\Qk)$ to have an intersection point, which we may call $(\Qg,\Qh,\Qk)$, if and only if $\Qg\cdot\Qh=\Qk$.  Clearly, if $\fQ'$ is constructed from $\cN(\fQ)$ using the same underlying set $\fQ$, $\fQ$ and $\fQ'$ are isotopic.

It is easy to prove that a $3$-subnet of $\cN(\fQ)$ corresponds to a subquasigroup of $\fQ$ (after suitable isotopy of the latter).

From $\cN$ we can also construct a biased expansion $\Omega(\cN)$ of $K_3$:  each line $l_{ij}\in \cL_{ij}$ is regarded as an edge with endpoints $v_i,v_j$, and a triangle $l_{12}l_{23}l_{13}$ is balanced if and only if the three lines are concurrent in a point.  Thus, $\Omega(\cN)$ has node set $\{v_1,v_2,v_3\}$ and edge set $\cN$, and the points of $\cN$ are in one-to-one correspondence with the balanced circles of $\Omega(\cN)$.  Reversing the process we get a $3$-net $\cN(\Omega)$ from a biased expansion $\Omega$ of $K_3$.  
We summarize the correspondences in a diagram:  
$$
\xymatrix{
\fQ \ar[r]\ar[d]	&\cN(\fQ) \ar@{<->}[d]	\\
\fQ K_3 \ar@{=}[r]\ar@{<->}[ur]	&\Omega(\cN(\fQ))
}
$$

\begin{prop} \label{PD:net}
Given a $3$-net $\cN$, $\Omega(\cN)$ is a biased expansion of $K_3$.  Given $\Omega$, a biased expansion of $K_3$, $\cN(\Omega)$ is a $3$-net.  Also, $\Omega(\cN(\Omega))=\Omega$ and $\cN(\Omega(\cN))=\cN$. 
\end{prop}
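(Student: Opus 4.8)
The plan is to treat the two constructions as a single dictionary read in two directions: a pencil $\cL_{ij}$ of $\cN$ corresponds to a fiber $p\inv(e_{ij})$ of $\Omega$, a point of $\cN$ corresponds to a balanced triangle of $\Omega$, and ``concurrent in a point'' corresponds to ``balanced.'' Once this correspondence is in place, all four assertions become routine verifications, and the only substantive ingredients are the two incidence properties of a $3$-net: two lines of a common pencil never meet, while two lines of different pencils meet in exactly one point (the standard meeting property, as in \cite{Latin}).

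First I would check that $\Omega(\cN)$ is a biased graph. Its circles are the digons (two lines of one pencil) and the triangles (one line from each pencil), and by construction only concurrent triangles lie in $\cB$, so every digon is unbalanced. A theta subgraph on the three nodes is either three parallel edges between two nodes (three digons, none balanced) or two parallel edges $a,a'$ of one pencil together with a two-edge path $b,c$ through the third node, giving circles $\{a,a'\}$, $\{a,b,c\}$, $\{a',b,c\}$. In the latter case $a$ and $a'$ cannot both be concurrent with $b$ and $c$, since that would force the two parallel lines $a,a'$ through the single common point of $b$ and $c$; hence at most one of the two triangles is balanced and no theta contains exactly two balanced circles. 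Thus $\Omega(\cN)$ is a biased graph with no half edges and no balanced digons. The projection $p$ sending each line of $\cL_{ij}$ to $e_{ij}$ is surjective and fixes nodes, and its unique-completion clause is exactly the assertion that two lines $\te,\tf$ of different pencils are concurrent with a unique line of the third pencil: they meet in a unique point $P$, and $P$ lies on exactly one line of that pencil. Hence $\Omega(\cN)$ is a biased expansion of $K_3$.

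The reverse direction is the same dictionary read backward. For $\cN(\Omega)$ the fibers $p\inv(e_{ij})$ are pairwise disjoint and serve as the three pencils; two edges of a common fiber lie in no common balanced triangle, since a balanced triangle uses one edge from each fiber, so same-pencil lines do not meet; each point, being a balanced triangle $\{l_{12},l_{23},l_{13}\}$, is incident with exactly one line of each pencil; and the expansion property says precisely that two edges of different fibers lie together in a unique balanced triangle, i.e.\ that two lines of different pencils meet in exactly one point. Hence $\cN(\Omega)$ is a $3$-net. The two round-trip identities then hold essentially by inspection, because neither construction alters the underlying sets: in $\Omega(\cN(\Omega))$ the edges are again the edges of $\Omega$, and a triangle is balanced iff its lines are concurrent in $\cN(\Omega)$ iff it was balanced in $\Omega$, so $\Omega(\cN(\Omega))=\Omega$; dually, in $\cN(\Omega(\cN))$ the lines and pencils are those of $\cN$, and the map sending a point $P$ of $\cN$ to the triple of lines through it is an incidence-preserving bijection onto the concurrent triples (injectivity uses that two lines of different pencils meet in at most one point), so $\cN(\Omega(\cN))=\cN$.

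I expect the only real obstacle to be bookkeeping rather than depth: matching the ``exactly one point'' meeting property of a $3$-net precisely with the ``unique balanced completion'' clause of a biased expansion, and confirming that neither construction secretly produces a balanced digon or an extra balanced circle. With the three-node case forcing all circles to be triangles or digons, these checks are short, and everything else is a direct substitution of terms.
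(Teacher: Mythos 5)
Your proof is correct and takes essentially the same approach as the paper: the paper's own proof consists only of the remark that the proposition ``follows directly from the various definitions'' and that ``the construction is reversible so it is a bijection,'' and your write-up simply supplies those routine verifications (the theta-subgraph check, the translation between unique balanced completion and the meeting property, and the two round trips). Your explicit appeal to the standard meeting property of $3$-nets from \cite{Latin} is consistent with the paper, which tacitly assumes that property (e.g.\ when it builds a quasigroup from a net), so this is a point of care rather than a divergence.
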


\begin{proof}  
The proposition follows directly from the various definitions.  The construction is reversible so it is a bijection.  
\end{proof}

Since $K_3$ is labelled by having numbered nodes, technically we have a labelled proposition.  The unlabelled analog is an immediate corollary.

\begin{prop}  \label{PD:subnet}
Let $\cN=\cN(\fQ)$ be a $3$-net with corresponding biased expansion graph $\Omega(\cN) = \bgr{\fQ K_3}$.  
The $3$-subnets of $\cN$ have the form $\cN(\fQ_1)$ where $\fQ_1$ is a subquasigroup of an isotope of $\fQ$ and they correspond to the biased expansions of $K_3$ that are subgraphs of $\Omega(\cN)$.
These subgraphs are the balance-closed subgraphs of $\Omega(\cN)$ that are spanning and connected and they are also the quasigroup expansions of the form $\bgr{\fQ_1 K_3}$.
\end{prop}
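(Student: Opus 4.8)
The plan is to leverage Proposition \ref{PD:net}, which gives a bijection $\cN\leftrightarrow\Omega(\cN)$ between $3$-nets and biased expansions of $K_3$, and then to translate the notion of ``$3$-subnet'' across this bijection. The first, routine step is to observe that this bijection restricts to a bijection between $3$-subnets of $\cN$ and those subgraphs $\Omega_1$ of $\Omega(\cN)$ that are themselves biased expansions of $K_3$. Indeed, the lines of a subnet $\cN'$ are lines of $\cN$, hence edges of $\Omega(\cN)$, and a triangle is balanced in $\Omega(\cN')$ iff its three lines are concurrent in $\cN'$ iff they are concurrent in $\cN$ iff the triangle is balanced in $\Omega(\cN)$; thus $\Omega(\cN')$ is a biased subgraph of $\Omega(\cN)$ in the sense of Section \ref{bgbasics}, and it is a biased expansion by Proposition \ref{PD:net}, the reverse passage being identical. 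Combined with the fact recorded in Section \ref{absnets}, that a $3$-subnet of $\cN(\fQ)$ is $\cN(\fQ_1)$ for $\fQ_1$ a subquasigroup of an isotope of $\fQ$, and that $\cN(\fQ_1)$ corresponds under Proposition \ref{PD:net} to the quasigroup expansion $\bgr{\fQ_1 K_3}$ (since $\Omega(\cN(\fQ_1))=\fQ_1 K_3$), this already yields the first and last assertions.

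The substance of the proposition is the middle characterization: that a subgraph $\Omega_1\subseteq\Omega(\cN)$ is a biased expansion of $K_3$ exactly when it is spanning, connected, and balance-closed. Here I would first record that the only circles of $\bgr{\fQ K_3}$ are digons (always unbalanced, since a biased expansion has no balanced digons) and triangles using one edge from each fiber $p\inv(e_{ij})$; hence the balanced circles are precisely the balanced triangles, and $\Omega_1$ is balance-closed iff, whenever two edges of a balanced triangle lie in $\Omega_1$, so does the third. Writing $A=p\inv(e_{12})\cap E(\Omega_1)$, $B=p\inv(e_{23})\cap E(\Omega_1)$, and $C=p\inv(e_{13})\cap E(\Omega_1)$, the three instances of this condition say exactly that, given any two edges drawn from two of the fibers, the edge of the remaining fiber completing them to a balanced triangle indeed lies in $\Omega_1$. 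This is verbatim the defining completion property of a biased expansion, once we know the completing edge exists and is unique; uniqueness is automatic because $\fQ$ is a quasigroup.

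For the direction ``biased expansion $\Rightarrow$ balance-closed, spanning, connected,'' I would note that a biased expansion of $K_3$ has node set $\{v_1,v_2,v_3\}$ and surjective projection, so all three fibers are nonempty, giving spanning and connected, while its completion property is the balance-closure condition above. For the converse I expect the one genuinely delicate point: I must check that spanning, connected, and balance-closed force all three fibers $A,B,C$ to be nonempty and of equal cardinality, so that $\Omega_1$ is a \emph{full} expansion over $K_3$ rather than a deficient fragment. Connectedness on three nodes makes at least two fibers nonempty, and balance-closure then forces the third ($A\cdot B\subseteq C$, and symmetrically), after which fixing $a_0\in A$ shows $b\mapsto a_0\cdot b$ is a bijection $B\to C$ and likewise $A\to C$, so $\#A=\#B=\#C$. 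With nonempty fibers and the completion property in hand, $\Omega_1$ satisfies the definition of a biased expansion of $K_3$, completing the equivalence. The main obstacle is precisely this cardinality-and-nonemptiness bookkeeping, since balance-closure is a strong constraint: a pair of unequal fibers can never be balance-closed, which is exactly why the weaker-looking ``balance-closed, spanning, connected'' suffices.
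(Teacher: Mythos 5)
Your proposal is correct and follows essentially the same route as the paper's own proof: reduce everything via the net--expansion correspondence to showing that the biased-expansion subgraphs are exactly the spanning, connected, balance-closed ones, get the forward direction from the completion property, and for the converse use connectedness to produce two nonparallel edges (hence two nonempty fibers) and balance-closure to fill the third fiber. Your extra bookkeeping showing $\#A=\#B=\#C$ via the bijections $b\mapsto a_0\cdot b$ is harmless but not needed, since the definition of a biased expansion only requires nonempty fibers plus unique completion (equal cardinality is then automatic, as noted in Section \ref{bx3}).
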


This is a more abstract analog of \cite[Corollary 2.4]{BG6}(I).  

\begin{proof}  
We show that the subgraphs described in the lemma are precisely the subgraphs that are biased expansions of $K_3$.  Then the proposition follows from the correspondence between biased expansions and $3$-nets.

Let $\Omega_1 \subseteq \Omega(\cN)$ be a biased expansion of $K_3$.  $\Omega_1$ is spanning and connected.  There are no balanced digons because every balanced circle in $\Omega(\cN)$ is a triangle.  $\Omega_1$ is balance-closed because, since it is a biased expansion, it must contain the third edge of any balanced triangle of which it contains two edges.

Conversely, assume the subgraph $\Omega_1$ is connected, spanning, and balance-closed.  Being connected, it contains two nonparallel edges $e, f$.  Being balance-closed, it also contains the third edge of the unique balanced triangle containing $e$ and $f$; thus, no edge fiber of $\Omega_1$ is empty.  Since it cannot contain a balanced digon, it is consequently a biased expansion.
\end{proof}

The three correspondences among $3$-nets, isostrophe classes of quasigroups, and biased expansions of $K_3$ are obviously compatible with each other.

\subsubsection{Projective $3$-nets}\label{projnets}\

A $3$-net $\cN$ is \emph{embedded} in a projective plane $\PP$ if it consists of lines that each contain exactly one of a fixed set of three points (which we call the \emph{centers}) and all the intersection points of the lines other than the centers.  Every point of $\cN$ is on exactly three embedded lines of the net and no point of $\cN$ is on a line spanned by the centers.  (This kind of embedding is called \emph{regular}, a term we omit since we consider no other kind.  Irregular embeddings are interesting [Bogya, Korchm\'aros, and Nagy \cite{Nagy} is a representative paper] but they do not embed the matroid of $\cN$, which is $G(\fQ K_3)=L(\fQ K_3)$ where $\fQ$ is a quasigroup derived from $\cN$.)  
A $3$-net may be (regularly) embedded in $\PP$ in either of two ways.  An \emph{affine $3$-net} has collinear centers; equivalently, it consists of three parallel classes in an affine plane.  A \emph{triangular $3$-net} has noncollinear centers.  

A \emph{complete} $3$-net in $\PP$ consists of \emph{all} lines in $\PP$ that contain exactly one center and all points of $\PP$ not on a line spanned by the centers.  A $3$-net embedded in $\PP$ need not be complete, but obviously it is contained in a complete $3$-net.  

\subsubsection{Dual $3$-nets}\label{dualnets}\

A \emph{dual $3$-net} in $\PP$, $\cN^*$, consists of three lines, which we call \emph{main lines}, and some subset of the points in the union of the main lines excluding intersection points of the main lines, such that any two points of $\cN^*$ on different main lines generate a line (which we call a \emph{cross-line}) that meets the third main line in a point of $\cN^*$.  A cross-line contains exactly three points of $\cN^*$, one from each main line.  
A subset $A$ of $\cN^*$ is \emph{cross-closed} if the flat $A$ generates is empty, a point, or a cross-line $l$ such that $A = l \cap \cN^*$.
The main lines, points, and cross-lines are dual to the centers, lines, and points of $\cN$.  
The property in a quasigroup $\fQ$ derived from $\cN$ that corresponds to cross-lines is that $\Qg\cdot\Qh=\Qk$ if and only if the points mapped bijectively to $\Qg$, $\Qh$, and $\Qk$ are collinear.  
The dual $3$-net corresponds to $\Omega(\cN)$: the points, main lines, and cross-lines of $\cN^*$ correspond to the edges, edge fibers $p\inv(e_{ij})$, and balanced circles of $\Omega(\cN)$.  

The dual net is affine if the three main lines are concurrent, triangular if they are not.  

A dual $3$-net $\cN^*$ embedded in $\PP$ can be extended to a complete dual $3$-net of $\PP$ by taking all the points of $\PP$, except the centers, that are on the main lines of $\cN^*$.

\subsubsection{Partial $3$-nets}\label{p3nets}

The $3$-net interpretation of Theorem \ref{T:qx} seems interesting in its own right.  
First we have to define a \emph{partial $3$-net}.  It is a triple $\cN=(\cP,\cM,\cL)$ consisting of a point set $\cP$, a set $\cM=\{l_{12},l_{23},\l_{31}\}$ of three lines called \emph{main lines}, another set $\cL$ of \emph{short lines}, and an incidence relation ``in'' between points and line such that every point is in exactly one main line, every short line has exactly one point in common with each main line, and no two short lines have more than one point in common.  We wrote this as if a line were a set of points, but although a short line is determined by its set of points, a main line might not have any points.  A $3$-net is a partial $3$-net in which every main line contains a point and any pair of points in different main lines belongs to a short line.

Suppose another partial $3$-net is $\cN'=(\cP',\cM',\cL')$; we say $\cN$ is a partial sub-$3$-net of $\cN'$ if $\cP \subseteq \cP'$, $\cM=\cM'$, $\cL\subseteq \cL'$, and the incidence relation of $\cN$ is the restriction to $\cN$ of that of $\cN'$.  Theorem \ref{T:qx} restated in terms of nets is

\begin{cor}\label{C:p3net}
Every finite partial $3$-net is a partial sub-$3$-net of a finite $3$-net.
\end{cor}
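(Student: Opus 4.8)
The plan is to prove the corollary entirely by translation, using Theorem \ref{T:qx} as a black box; all the combinatorial substance (the bipartite-matching construction via K\"onig's theorem) already lives there. What remains is to verify that a finite partial $3$-net is the same datum as a finite biased graph of order $3$, that a genuine $3$-net corresponds to a biased expansion of $K_3$, and that the partial-sub-$3$-net relation corresponds to the subgraph relation for biased graphs.

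First I would extend the dictionary of Proposition \ref{PD:net} to the partial setting. Given a partial $3$-net $\cN=(\cP,\cM,\cL)$ with $\cM=\{l_{12},l_{23},l_{31}\}$, I build a biased graph $\Omega$ on nodes $\{v_1,v_2,v_3\}$ by declaring each point on $l_{ij}$ to be an edge with endpoints $v_i,v_j$, and each short line to be a balanced triangle, all other triangles and all digons being unbalanced. Since every point lies on exactly one main line, this produces a graph with no half or loose edges, and the three main lines become the three edge-classes (``sides'') of $K_3$. Conversely, from a biased graph of order $3$ of this type one reads off a partial $3$-net by reversing the recipe, edges becoming points indexed by their endpoint pairs and balanced triangles becoming short lines.

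The one point needing real care is to confirm that this $\Omega$ is a legitimate biased graph, and here the defining axioms match exactly. With only three nodes, every circle of $\Omega$ is a digon or a triangle; since digons are never declared balanced, a theta subgraph (two triangles sharing a two-edge path, together with the digon on their differing edges) can contain at most two balanced circles, and it contains exactly two precisely when both its triangles are balanced. Thus the theta-graph condition of Section \ref{bgbasics} holds for $\Omega$ if and only if no two distinct balanced triangles share two edges, which is exactly the partial-$3$-net axiom that no two short lines have more than one common point. I expect this equivalence to be the main obstacle, as it is the only step where the bias axiom is genuinely used; everything else is bookkeeping. When in addition every main line carries a point and every pair of points on different main lines lies on a short line, $\cN$ is a genuine $3$-net and $\Omega$ is a biased expansion of $K_3$ by Proposition \ref{PD:net}.

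Finally I would match the order relations and conclude. Writing $\Omega,\Omega'$ for the biased graphs of partial $3$-nets $\cN,\cN'$, the requirements $\cP\subseteq\cP'$, $\cM=\cM'$, $\cL\subseteq\cL'$ with restricted incidence translate into $\|\Omega\|\subseteq\|\Omega'\|$ on the common node set together with $\cB(\Omega)\subseteq\cB(\Omega')$; the biased-graph subgraph relation $\cB(\Omega)=\cB(\Omega')\cap\cC(\Omega)$ is slightly stronger but implies it, so a subgraph of biased graphs always yields a partial sub-$3$-net. Now, given a finite partial $3$-net $\cN$, I form its finite biased graph $\Omega$, apply Theorem \ref{T:qx} to embed $\Omega$ as a subgraph of a finite biased expansion $\Omega'$ of $K_3$, and translate $\Omega'$ back to a finite $3$-net $\cN'$ by Proposition \ref{PD:net}. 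The subgraph inclusion then exhibits $\cN$ as a partial sub-$3$-net of $\cN'$, which is the assertion of the corollary.
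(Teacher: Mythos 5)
Your proposal is correct and takes essentially the same route as the paper: the paper gives no separate argument for Corollary \ref{C:p3net}, presenting it as Theorem \ref{T:qx} ``restated in terms of nets'' via exactly the dictionary you describe (points of the partial $3$-net $\leftrightarrow$ links of a biased graph of order $3$ with main lines as the three endpoint pairs, short lines $\leftrightarrow$ balanced triangles, the theta-graph condition $\leftrightarrow$ the axiom that two short lines share at most one point). Your write-up merely makes explicit the translation, including the correct observation that the biased-subgraph relation is stronger than, and hence implies, the partial-sub-$3$-net relation.
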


\subsubsection{Quasigroups vs.\ nets vs.\ biased expansions vs.\ matroids}\label{qnbxm}\

Our treatment of plane embeddings abounds in cryptomorphisms (hidden structural equivalences).  For a quasigroup $\fQ$ we have a $3$-net $\cN(\fQ)$, a quasigroup expansion $\fQ K_3$, a full quasigroup expansion $\fQ K_3\full$, a biased graph $\bgr{\fQ K_3}$, a full biased graph $\bgr{\fQ K_3\full}$, and four matroids: $G(\fQ K_3)$, $\G(\fQ K_3)$, $L(\fQ K_3)$, and $L_0(\fQ K_3)$.  They are not all identical but they are all equivalent.  For instance, $\G(\fQ K_3)$ is identical to $G(\fQ K_3\full)$; biased-graph isomorphisms always imply matroid isomorphisms and, conversely, certain matroid isomorphisms imply biased-graph isomorphisms as we show in Section \ref{monomorph}; isomorphisms of full frame matroids $\G$ imply isomorphisms of the non-full frame matroids $G$; etc.  It is particularly important to be clear about the equivalence of projective-planar embeddings of the different structures, since the language we use to describe what are essentially the same embedding can vary depending on the circumstances.

For instance, an embedding of $\cN(\fQ)$ as a triangular $3$-net is the same thing as a representation of $\G(\fQ K_3)$ by lines, with the half edges mapping to the lines connecting the centers.  
Dually, an embedding of $\cN(\fQ)$ as a dual triangular $3$-net is the same thing as a point representation of $\G(\fQ K_3)$; the half edges map to the intersection points of the main lines.  
The sets $\hE(\hN)$ and $\hE\full(\hN)$ of points on the lines connecting the members of a basis $\hN$ are the points representing $\cN(\fT^\times)$ and equivalently $G(\fT^\times K_3)$, and $\cN\full(\fT^\times)$ and equivalently $\G(\fT^\times K_3)$.

An embedding of $\cN(\fQ)$ as an affine $3$-net is the same thing as a representation of $L_0(\fQ K_3)$ by lines; the extra point $e_0$ maps to the line that contains the centers.
An embedding of $\cN(\fQ)$ as a dual affine $3$-net is the same thing as a point representation of $L_0(\fQ K_3)$; the extra point $e_0$ maps to the point of concurrence of the main lines.

\subsubsection{Monomorphisms}\label{monomorph}\

We note here an enlightening aspect of the relationship between biased expansions of $K_3$ and their matroids.  
A biased graph determines its frame and lift matroids, by definition.  For quasigroup expansion graphs the converse is essentially true, by the surjective case of the two following results.  
A \emph{monomorphism} of biased graphs is a monomorphism of underlying graphs that preserves balance and imbalance of circles.
A \emph{long line} of a matroid is a line that contains at least four atoms.  

\begin{prop}\label{L:frame-graph}
If $\#\fQ > 1$, then any matroid monomorphism $\theta: \G(\fQ K_3) \embeds \G(\fQ' K_3)$ induces a unique biased-graph monomorphism $\theta': \bgr{\fQ K_3} \embeds \bgr{\fQ' K_3}$ that agrees with $\theta$ on $E(\fQ K_3)$.  When $\#\fQ = 1$, there is a biased-graph monomorphism $\bgr{\fQ K_3} \embeds \bgr{\fQ' K_3}$, but it may not be induced by $\theta$.
\end{prop}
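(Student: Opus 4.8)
The plan is to recover the biased graph from the full frame matroid by means of the long-line notion, which is informative precisely because $\#\fQ>1$. Recall the structure of the rank-$3$ matroid $\G(\fQ K_3)=G(\fQ K_3\full)$, read off from the list of frame circuits: the three half edges $e_{v_1},e_{v_2},e_{v_3}$ form a basis; for each pair $\{i,j\}$ the $3$-circuits $\{e_{v_i},\Qg e_{ij},e_{v_j}\}$ and $\{e_{v_i},\Qg e_{ij},\Qg' e_{ij}\}$ place $e_{v_i}$, $e_{v_j}$, and the entire fiber $p\inv(e_{ij})$ on one line $\ell_{ij}$, the edge line, which therefore carries $\gamma+2$ atoms, where $\gamma=\#\fQ$; and a triangle $\{\Qg e_{12},\Qh e_{23},\Qk e_{13}\}$ is a $3$-circuit, i.e.\ a collinear triple with one atom in each fiber, exactly when $\Qg\cdot\Qh=\Qk$, that is, when it is balanced. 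Checking the circuit types shows that each such cross-line carries exactly three atoms and that every line other than the three edge lines carries at most three atoms; hence, for any quasigroup expansion of fiber size greater than $1$, the long lines are exactly the three edge lines.

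Since $\theta$ is a matroid monomorphism it preserves rank, hence independence, $3$-circuits, and the property of lying on a long line. The image of $\ell_{ij}$ is a set of $\gamma+2\ge4$ collinear atoms, so $\G(\fQ' K_3)$ has a long line, thus fiber size greater than $1$, and $\theta(\ell_{ij})$ lies on one of its edge lines. Distinct edge lines of $\bgr{\fQ K_3}$ cannot share an image, since the union of any two contains the basis $\{e_{v_1},e_{v_2},e_{v_3}\}$, which $\theta$ sends to a rank-$3$ set; so $\theta$ induces a permutation $\sigma$ of $\{1,2,3\}$ --- an automorphism $\alpha$ of $K_3$ --- with $\theta(\ell_{ij})\subseteq\ell'_{\sigma(i)\sigma(j)}$. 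As $e_{v_i}$ is the unique atom on both $\ell_{ij}$ and $\ell_{ik}$, its image lies on both $\ell'_{\sigma(i)\sigma(j)}$ and $\ell'_{\sigma(i)\sigma(k)}$, whose only common atom is $e'_{v_{\sigma(i)}}$; thus $\theta(e_{v_i})=e'_{v_{\sigma(i)}}$, so half edges map to half edges, and every fiber atom $\Qg e_{ij}$ maps to a non-half-edge atom of $\ell'_{\sigma(i)\sigma(j)}$, i.e.\ to a fiber atom over $e_{\sigma(i)\sigma(j)}$.

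I would then set $\theta'$ equal to $\theta$ on the links $E(\fQ K_3)$, paired with the node map $\alpha$. This is a graph monomorphism, being injective and sending an edge with endpoints $v_i,v_j$ into the fiber over $e_{\sigma(i)\sigma(j)}$, compatibly with $\alpha$. It preserves balance and imbalance: in an order-$3$ expansion every circle is a digon or a triangle; a balanced triangle is a $3$-circuit, so its image is a collinear triple with one atom on each of three distinct edge lines, again a balanced triangle, whereas an unbalanced triangle is independent and maps to an independent, hence non-collinear, hence unbalanced triangle; and parallel edges map to parallel edges, so the necessarily unbalanced digons are preserved. Thus $\theta'$ is the required biased-graph monomorphism, and it is unique: it agrees with $\theta$ on the links by definition, and the node map is then forced because each node is an endpoint of a link.

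The hypothesis $\#\fQ>1$ is used only to make the edge lines visible as long lines, which is exactly where the final sentence of the proposition comes in. If $\gamma=1$, each edge line carries just $1+2=3$ atoms, the long-line characterization collapses, and in $\G(1\cdot K_3)$ --- a rank-$3$ matroid on six atoms --- the half edges and the (balanced) links are matroid-theoretically interchangeable, so a monomorphism $\theta$ may carry links to half edges and fail to respect the half-edge/link split. A biased-graph monomorphism $\bgr{1\cdot K_3}\embeds\bgr{\fQ' K_3}$ still exists, obtained by mapping the single balanced triangle onto any balanced triangle of $\fQ' K_3$, but it need not be induced by $\theta$. The one real difficulty is the first paragraph: once the edge lines, the half edges, and the exactly-three-atom cross-lines are pinned down intrinsically, everything else is bookkeeping.
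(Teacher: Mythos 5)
Your proof is correct and follows essentially the same route as the paper's: identify the long lines as the three edge lines (possible exactly when $\#\fQ>1$), recover the half edges as the pairwise intersections of long lines, read off balanced triangles from three-point lines and imbalance from rank preservation, and handle $\#\fQ=1$ via the degeneracy $\G(1\cdot K_3)\cong G(K_4)$. Your write-up is simply a more detailed execution of the paper's argument, so there is nothing to flag.
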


\begin{proof}
First we assume $\#\fQ \geq 2$.  A balanced line in $\G$ has at most three points; therefore, we can identify any long line as the subgraph of all edges with endpoints in a pair of nodes.  Such lines exist when $\#\fQ>1$.  The points common to two long lines are the half edges; from that the incidences of all edges can be inferred.  As each of $ \G(\fQ K_3)$ and $\G(\fQ' K_3)$ has three long lines, those of the former must be mapped to those of the latter.  We conclude that $\theta|_{E(\fQ K_3)}$ induces an ordinary graph monomorphism $\theta'$.  

The three-point lines of $\G(\fQ K_3)$ tell us what balanced triangles there are.  They must be mapped to three-point lines of $\G(\fQ' K_3)$, which are also balanced triangles, so balance of triangles is preserved.  Imbalance of triangles is preserved because $\theta$ preserves rank, so an unbalanced triangle (rank $3$) cannot be carried to a balanced triangle (rank $2$).  All digons are unbalanced because there are no 2-circuits in the matroids.  Thus, $\theta'$ is a monomorphism of biased graphs.

If $\#\fQ = 1$, $\G(\fQ K_3) \cong G(K_4)$ so it is impossible to identify the half edges and nodes of $\fQ K_3$ from the full frame matroid.  However, in any embedding $G(K_4) \embeds \G(\fQ' K_3)$, there is some triangle that carries over to a balanced triangle and the remaining edges map to half edges of $\fQ' K_3$.
\end{proof}

\begin{prop}\label{L:lift-graph}
If $\#\fQ > 2$, then a matroid monomorphism $\theta: L_0(\fQ K_3) \embeds L_0(\fQ' K_3)$ induces a biased-graph monomorphism $\theta'': \bgr{\fQ K_3} \embeds \bgr{\fQ' K_3}$.  When $\#\fQ \leq 2$, there is a biased-graph monomorphism $\bgr{\fQ K_3} \embeds \bgr{\fQ' K_3}$, though it is not necessarily induced by $\theta$.
\end{prop}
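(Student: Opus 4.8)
The plan is to imitate the proof of Proposition \ref{L:frame-graph}, with the role of the three long lines (there, the node-pair subgraphs of the full frame matroid) now played by the \emph{fiber-lines} of the extended lift matroid. First I would record the rank-$2$ flats of $L_0(\fQ K_3)$, of which there are exactly two types. Each edge fiber $p\inv(e_{ij})$ together with $e_0$ is a line, since $\{\te,\tf,e_0\}$ is a lift circuit whenever $\te,\tf$ lie in one fiber (an unbalanced digon together with $e_0$; see Figure \ref{F:liftcirc3}); this produces three fiber-lines $\ell_{12},\ell_{23},\ell_{13}$, each with $\gamma+1$ points, all passing through the single common point $e_0$. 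The only other lines are the balanced triangles (cross-lines), each with exactly three points and none through $e_0$; since every pair of points lies on one of these two kinds of line, there are no others. The crucial point is that when $\#\fQ=\gamma>2$ the fiber-lines are \emph{long} ($\gamma+1\geq4$ points) while cross-lines are not, so the three long lines of $L_0(\fQ K_3)$ are precisely $\ell_{12},\ell_{23},\ell_{13}$, with unique common point $e_0$.

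Next I would transport this structure across $\theta$. A cardinality count ($3\gamma+1$ domain points must inject) forces $\gamma'\geq\gamma\geq3$, so the target also has exactly three long lines, meeting in $e_0'$. Because $\theta$ preserves rank, each source long line maps onto $\geq4$ collinear points and hence into a target long line; a rank argument (two parallel edges and one edge from a third fiber are independent, so their images cannot share a line) shows the three images are distinct, whence they are the three target fiber-lines and $\theta(e_0)=e_0'$. Then $\theta$ carries edges to edges and preserves the fiber partition, inducing a bijection of the three node-pairs; this bijection is realized by a unique node permutation $\sigma\in S_3$, giving an underlying-graph monomorphism $\theta''$ that agrees with $\theta$ on edges. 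Finally $\theta''$ preserves bias: a balanced triangle is a three-point cross-line, so it maps to three collinear edges, one in each target fiber and avoiding $e_0'$, i.e.\ to a cross-line (a balanced triangle); an unbalanced triangle is independent (rank $3$) and stays independent, and a digon maps to a pair of parallels, which is unbalanced. Thus $\theta''$ is a biased-graph monomorphism.

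For $\#\fQ\leq2$ the long-line test collapses and $\theta$ need not respect the fibers, so I would prove existence of a biased-graph monomorphism by hand. When $\#\fQ=1$, $\bgr{\fQ K_3}$ is a single balanced triangle, which embeds in any $\bgr{\fQ' K_3}$ by mapping it to any balanced triangle; this holds outright, independently of $\theta$. The delicate case is $\#\fQ=2$, where $\fQ$ is isotopic to $\bbZ_2=\mathbb F_2^+$ and, by Theorem \ref{T:canonicalrep}, $L_0(\fQ K_3)$ is represented over $\mathbb F_2$ in $PG(2,2)$, hence is the Fano matroid $F_7$; its point-transitivity means $\theta(e_0)$ need not be $e_0'$. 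Here I would show that the Fano restriction $\theta(F_7)$ must nevertheless contain $e_0'$: each target fiber-line meets $\theta(F_7)$ in a flat of $F_7$, i.e.\ in a Fano line (three points) or in at most one point, and the three fiber-lines through $e_0'$ meet pairwise only in $e_0'$; if $e_0'\notin\theta(F_7)$ then the seven Fano points would be covered by three pairwise-disjoint Fano sublines, forcing sizes $3+3+1$ and so two disjoint Fano lines, which is impossible since in $F_7$ any two lines meet. Therefore $e_0'=\theta(x)$ for some Fano point $x$, the three Fano lines through $x$ map onto the three target fiber-lines, and re-centering $\theta(F_7)$ at $e_0'$ exhibits an order-$2$ sub-biased-expansion of $\bgr{\fQ' K_3}$; being an order-$2$ expansion it is fibered-isomorphic to $\bgr{\fQ K_3}$ by Proposition \ref{P:isotopism}, yielding the desired monomorphism. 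It is induced by $\theta$ exactly when $x=e_0$, which the symmetry of $F_7$ does not guarantee—whence ``not necessarily induced by $\theta$.'' I expect this boundary case $\#\fQ=2$ to be the main obstacle, its crux being the combinatorial fact that $e_0'$ cannot escape the Fano restriction, handled by the covering argument just sketched.
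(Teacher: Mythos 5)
Your proof is correct, and for the main case $\#\fQ>2$ it is essentially the paper's own argument: the lines of $L_0(\fQ K_3)$ are the three fiber-lines (the unbalanced lines, i.e.\ the lines through $e_0$) and the balanced triangles, so for $\gamma\geq3$ the long lines are exactly the lines through $e_0$; hence $\theta$ must send $e_0$ to $e_0'$ and edges to edges respecting the fiber partition, and balance/imbalance is preserved because balanced triangles are precisely the three-point lines and $\theta$ preserves rank. Your treatment of $\#\fQ=1$ (map the single balanced triangle to any balanced triangle of the target, independently of $\theta$) is also the paper's. Where you genuinely diverge is the case $\#\fQ=2$: the paper only remarks that $L_0(\pm K_3)\cong F_7$, so $e_0$ cannot be singled out, and then asserts in one sentence that ``the method of Proposition \ref{L:frame-graph} provides a biased-graph monomorphism,'' leaving all details implicit. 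You instead supply an explicit argument: each target fiber-line meets $\theta(F_7)$ in a flat of $F_7$, hence in $0$, $1$, or $3$ points; the three fiber-lines cover the whole ground set of $L_0(\fQ' K_3)$ and pairwise meet only in $e_0'$; so if $e_0'\notin\theta(F_7)$ the seven points would split as $3+3+1$ across pairwise disjoint flats, giving two disjoint lines of $F_7$, which is impossible. Therefore $e_0'=\theta(x)$ for some Fano point $x$, the three Fano lines through $x$ go into the three fiber-lines, the remaining four Fano lines map to balanced triangles, and the six edge-images form a sub-biased-expansion $2\cdot K_3$ of $\bgr{\fQ' K_3}$, which is fibered-isomorphic to $\bgr{\fQ K_3}$ by Proposition \ref{P:isotopism}. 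This is a complete and correct filling-in of what the paper leaves as a gesture, and it also makes transparent exactly when the resulting monomorphism fails to be induced by $\theta$, namely when $x\neq e_0$. The only blemishes are cosmetic (e.g.\ ``a third fiber'' should read ``a second fiber,'' and the paper cites \cite{SBM} rather than Theorem \ref{T:canonicalrep} for $L_0(\pm K_3)\cong F_7$); neither affects correctness.
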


\begin{proof}
The argument here is similar to that of Proposition \ref{L:frame-graph}.  
Assuming $\#\fQ \geq 3$, the long lines are precisely the lines that contain $e_0$, that is, the unbalanced lines of $L(\fQ K_3))$.  From this we identify $e_0$ as the unique element of all long lines, and we identify the parallel classes of edges as the remainders of the long lines.  The same holds in $L_0(\fQ' K_3)$, so $\theta''$ has to map $e_0$ to $e_0$ and edges to edges.  As it preserves parallelism, it is a graph monomorphism.  

The balanced triangles are the three-point lines of $L_0(\fQ K_3)$, so balance is preserved.  Imbalance of circles is preserved for the same reason as with the full frame matroid.

When $\#\fQ = 2$, $\fQ$ is isotopic to the sign group.  $L_0(\pm K_3) \cong F_7$, the Fano plane \cite{SBM}, so $e_0$ cannot be singled out from the points of the matroid, but the method of Proposition \ref{L:frame-graph} provides a biased-graph monomorphism.

When $\#\fQ = 1$, $e_0$ is the unique matroid coloop in $L_0(\fQ K_3)$.  The edges of $\fQ K_3$ must map into a line of at least three points, but this line need not be a balanced triangle in $\fQ' K_3$.  Still, those edges can be embedded in $\bgr{\fQ' K_3}$ by a different biased-graph monomorphism.
\end{proof}


\section{The planar full frame matroid and triangular $3$-nets}\label{q frame}

In this section we find somewhat algebraic necessary and sufficient conditions for the full frame matroid $\G(\fQ K_3)$ of a quasigroup expansion to be embeddable in a projective plane.  (For the non-full matroid $G(\fQ K_3)$ the conditions are broader because $G(\Omega)=L(\Omega)$; we postpone that matroid to Section \ref{q planar}.)  
We remind the reader that our results can be applied to any finite biased graph of order 3 because it can be embedded in a finite biased expansion of $K_3$ (Theorem \ref{T:qx}).  

An intuitive idea of the embedding is that, if $\fQ$ is a multiplicative subloop of a ternary ring, this ternary ring provides coordinates for $\G(\fQ K_3)$ in the corresponding projective plane.  (The explanation of the ``somewhat'' attached to ``algebraic'' is that the structure of a ternary ring is more geometrical than algebraic.)  Conversely, if $\G(\fQ K_3)$ embeds in a projective plane, then from four well-chosen points in the projective representation of $\G(\fQ K_3)$ we construct a ternary ring that contains $\fQ$.  The embeddings are Menel{\ae}an (if as points) and Cevian (if as lines) as in \mencev; the new contribution is a construction method, along with the ``somewhat algebraic'' criterion for their existence.

When $\fQ$ is a multiplicative subgroup of a skew field $\fF$, \cite[Theorem IV.2.1 and Corollary IV.2.4]{BG} guarantee that $\G(\fQ K_3)$ embeds as points and hyperplanes in the Desarguesian projective plane coordinatized by $\fF$, and conversely, embedding implies $\fQ$ is essentially such a subgroup.  Theorem \ref{parte1} presents similar results for all quasigroups and planes.  
In the latter part of this section we show that the new embeddings are the same as those obtained from skew fields, for quasigroups that are multiplicative subgroups of skew fields.

A remarkable conclusion is that, although an embedding of $\G(\fQ K_3)$ in $\PP$ by lines has a natural construction in terms of a ternary ring $\fT$ associated with $\PP$ (Section \ref{q f line}), there seems to be no general construction in terms of $\fT$ of a point embedding, analogous to that for Desarguesian planes.  
We had hoped to show a natural representation of $\G(\fQ K_3)$ by points in $\PP$ but we were unable to find one.  There is a kind of point embedding in $\PP$ but it is merely the dual of line embedding in $\PP^*$ and it is naturally expressed in terms of the dual ternary ring $\fT^*$ that coordinatizes $\PP^*$ (see Section \ref{q f point}).  
We can be more specific.  The conditions for $\G(\fQ K_3)$ to embed as lines seem natural because they involve the natural operations of addition and multiplication in $\fT$.  
By contrast, in a vector representation of $\G(\fQ K_3)$ \cite[Section IV.2]{BG} (where $\fQ$ is a subgroup of the multiplicative group of the skew field) one is forced to introduce negation, which is not defined in a ternary ring.  
We believe this supports the long-held opinion of Zaslavsky that frame representations by hyperplanes are more natural than those by points.  
That could not be demonstrated for biased graphs obtained from gains in $\fF^\times$ (in \cite[Part IV]{BG}) because linear duality is an isomorphism; it can be seen in planes because $\PP^*$ may not be isomorphic to $\PP$.  A point representation of a full frame matroid in $\PP$ in terms of $\fT$ would overthrow this conclusion, but we could not find any.\footnote{Possibly, defining $\fT^+$ in the Hall--Dembowski manner would reverse the conclusion.  Cf.\ Section \ref{ternary}.}

We denote the half edges in $\fT^\times K_3\full$ (or $\bar\fQ K_3\full$ if $\bar\fQ \subseteq \fT^\times$) and $\fQ K_3\full$ by $\Td_i$ and $\Qd_i$ in order to distinguish edges in the two different full frame matroids.

\subsection{Embedding as lines or points}\label{embed frame}\

We begin with criteria for embedding $3$-nets; equivalently, biased expansions of the triangle.  
Recall that $E_{ij}$ is the set of all edges joining $v_i$ and $v_j$.

\begin{lem} \label{L:netintoP}
{\rm(I)}  If $\fT$ is a ternary ring and $\bar\fQ$ is a subquasigroup of $\fT^\times$, then $\cN(\bar\fQ)$ embeds in $\PP_\fT$ as a triangular $3$-net and $G(\bar\fQ K_3)$ embeds in $\PP_\fT$ as lines.  Embedding functions $\bar\rho^*$ and $\rho^*$ are defined by
\begin{align*}
\bar\rho^*(L_{12}(\Tg)) &= \rho^*(\Tg e_{12}) = \langle \Tg \rangle, \\ 
\bar\rho^*(L_{23}(\Th)) &= \rho^*(\Th e_{23}) = \langle \Th, 0 \rangle, \\ 
\bar\rho^*(L_{13}(\Tk)) &= \rho^*(\Tk e_{13}) = \langle 0, \Tk \rangle, 
\end{align*}
for $\Tg, \Th, \Tk \in \bar\fQ$.  
The centers are $o=[0,0]$ for $\bar\rho^*(\cL_{12}) = \rho^*(E_{12})$, $u=[0]$ for $\bar\rho^*(\cL_{23}) = \rho^*(E_{23})$, and $v=\hz$ for $\bar\rho^*(\cL_{13}) = \rho^*(E_{13})$.  

The matroid embedding extends to $\G(\bar\fQ K_3)$ by 
$$
\rho^*( \Td_1 ) = \langle 0 \rangle , \quad \rho^* ( \Td_2 ) = \langle 0, 0 \rangle , \quad \rho^*( \Td_3 ) = \hZ.
$$

{\rm(II)}  By changing $\langle\ldots \rangle$ to $[\ldots ]^*$ we get $\cN(\bar\fQ)$ embedded in $\PP_\fT^*$ as the points of a dual $3$-net and the extended embedding becomes a point representation of $\G(\bar\fQ K_3)$ in $\PP_\fT^*$.  
\end{lem}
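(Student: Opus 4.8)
The plan is to read everything off the collinearity/concurrence dictionary already set up in Lemma~\ref{collinearpointsinaplane}, Lemma~\ref{intersectionofthreelines} and Theorem~\ref{pst}, and then to check that the stated assignment respects matroid dependence. First, for Part (I), I would verify that $\bar\rho^*$ is a well-defined injection and that each of the three families $\bar\rho^*(\cL_{12})=\{\langle\Tg\rangle\}$, $\bar\rho^*(\cL_{23})=\{\langle\Th,0\rangle\}$, $\bar\rho^*(\cL_{13})=\{\langle0,\Tk\rangle\}$ is a genuine pencil. Using \eqref{E:coordinates} together with the axioms $t(0,a,b)=b$ and $t(a,0,b)=b$, one sees that the lines in each family pass through a common point, and Theorem~\ref{pst} identifies these three common points with $o=[0,0]$, $u=[0]$, $v=\hz$; since two of them lie on $\hZ$ and the third does not, the centers are noncollinear, so the embedded net is triangular. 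Injectivity is immediate because distinct parameter values give distinct coordinate lines and no line of one family equals a line of another.

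The heart of Part~(I) is the concurrence criterion. In $\cN(\bar\fQ)$ the lines $L_{12}(\Tg),L_{23}(\Th),L_{13}(\Tk)$ are concurrent exactly when $\Tg\cdot\Th=\Tk$, and since $\bar\fQ$ is a subquasigroup of $\fT^\times$ this says $\Tg\times\Th=\Tk$; by Lemma~\ref{intersectionofthreelines} that is precisely the condition for $\langle\Tg\rangle,\langle\Th,0\rangle,\langle0,\Tk\rangle$ to meet, namely in the point $[\Tg,\Tk]$. Hence $\bar\rho^*$ carries net concurrences to plane concurrences and back, which is exactly a triangular embedding of $\cN(\bar\fQ)$. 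Because $\bar\fQ K_3$ is a biased expansion, the rank-$2$ circuits of $G(\bar\fQ K_3)$ are its balanced triangles, which correspond bijectively to the net points via this criterion; the parallel-edge (theta) circuits within a single $E_{ij}$ map to lines of one pencil and so are concurrent at the corresponding center, and a theta with no balanced circle produces no spurious concurrence. As the whole configuration sits in the rank-$3$ plane, this shows $\rho^*$ on $E(\bar\fQ K_3)$ is a representation of $G(\bar\fQ K_3)$ by lines.

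To extend to $\G(\bar\fQ K_3)$ I would send the half edges to the three sides of the center triangle and check the extra circuits. The controlling fact is that the long line $\{\Td_i,\Td_j\}\cup E_{ij}$, the matroid closure of $\{\Td_i,\Td_j\}$, must map to a concurrence at the center of the pencil $\cL_{ij}$. By Theorem~\ref{pst} the images $\langle0\rangle,\langle0,0\rangle,\hZ$ are the sides $ov,ou,uv$, so each pair of sides meets in one of $o,u,v$, through which the matching pencil of edge lines already passes; this forces the intended incidences $\rho^*(\Td_i)\cap\rho^*(\Td_j)=\text{(center of }\cL_{ij})$. I would then confirm that the remaining full-frame circuits of a simply biased graph of order $3$ (the handcuff-type figures of Figure~\ref{F:framecirc3}) map to concurrent lines while independent sets do not, so that the extended $\rho^*$ faithfully represents $\G(\bar\fQ K_3)$.

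Finally, Part~(II) is pure duality. Since $\PP_\fT^*=\PP_{\fT^*}$ and $\langle x,y\rangle=[x,y]^*$, $\langle x\rangle=[x]^*$, $\hZ=\hz^*$, the dual of a representation by lines is a representation by points: replacing each $\langle\cdots\rangle$ by $[\cdots]^*$ turns the three pencils into collinear point-sets on three main lines and turns concurrences into collinearities (Lemma~\ref{collinearpointsinaplane}), which is exactly a dual $3$-net in $\PP_\fT^*$ and a point representation of $\G(\bar\fQ K_3)$. I expect the main obstacle to be the bookkeeping in the $\G$-extension, namely pinning down the incidences of $o,u,v$ with $\langle0\rangle,\langle0,0\rangle,\hZ$ so that \emph{every} full-frame circuit, and no independent set, becomes concurrent; once those incidences are settled via Theorem~\ref{pst}, the rest of the verification is routine.
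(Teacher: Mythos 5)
Your proposal is correct and takes essentially the same route as the paper: the entire mathematical content is the concurrence criterion of Lemma~\ref{intersectionofthreelines}, which translates $\Tg \times \Th = \Tk$ in $\fT^\times$ (equivalently $\Tg \cdot \Th = \Tk$ in $\bar\fQ$) into concurrence of $\langle\Tg\rangle$, $\langle\Th,0\rangle$, $\langle0,\Tk\rangle$, and the paper's proof consists of exactly that one observation. The further verifications you outline (the pencil structure and noncollinearity of the centers, the forced incidences in the half-edge extension to $\G(\bar\fQ K_3)$, the circuit-by-circuit independence checks, and dualization for part (II)) are precisely the routine steps the paper leaves implicit with the phrase ``the work was done in Lemma~\ref{intersectionofthreelines}.''
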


\begin{proof}
The work was done in Lemma \ref{intersectionofthreelines}.  The lines $\rho^*(\Tg e_{12})$, $\rho^*(\Th e_{23})$, $\rho^*(\Tk e_{13})$ are concurrent $\iff$ (by that lemma) $\Tg \times \Th = \Tk$ in $\fT^\times$ $\iff$ $\Tg \cdot \Th = \Tk$ in $\bar\fQ$.
\end{proof}

The multiplicity of notations for the same things can be confusing, but we cannot avoid it since each reflects a different point of view that plays a role in our theory.  
In particular, $o^*=ou=\langle0,0\rangle$, $u^*=ov=\langle0\rangle$, and $\hz^*=uv=\hZ$.  
The first notation is that of points in $\PP^*$.  The second is that of lines in $\PP$.  The third is affine coordinates in the coordinate system determined by $u,v,o,e$ and the ternary ring derived therefrom.  

We connect quasigroup expansions to geometry in another way.  
Let $\PP$ be a projective plane coordinatized by a planar ternary ring $\fT(u,v,o,e)$ associated to points $u,v,o,e$.  
Let $\cN^\times$ denote the complete triangular $3$-net in $\PP$ on centers $u,v,o$ and let $\cN^\times{}\full := \cN^\times \cup \{ou, ov, uv\}$; that is, we adjoin to $\cN^\times$ the lines joining the centers.  $\cN^\times$ is a projective realization of the abstract $3$-net $\cN(\fT^\times)$.  Thus they have essentially the same biased graph, written $\Omega(\cN^\times)$ or $\Omega(\cN(\fT^\times))$ depending on the version of $3$-net from which we obtained the biased graph.  Both are naturally isomorphic to $\bgr{\fT^\times K_3}$.  

Now take $\hN:=\{o,u,v\}$, a (matroid) basis for the plane.  Its dual is $\hN^*=\{uo, vo, uv\}$, which is both a set of lines in $\PP$ and a set of points in the dual plane $\PP^*$.  
Let $\hE\full(\hN^*)$ be the set of all lines of $\PP$ on the points of $\hN$; it is identical to the augmented projective $3$-net $\cN^\times{}\full$ and its biased graph $\Omega(\hN^*,\hE\full(\hN^*))$ is therefore the same as $\Omega(\cN(\fT^\times{}\full))$.  
Simultaneously, $\hE\full(\hN^*)$ is a set of points in $\PP^*$; as such it has a matroid structure.  
(All this is the viewpoint of \cite[Section 2]{BG6}.)    
For the special case of Lemma \ref{L:netintoP} in which $\bar\fQ=\fT^\times$ there is the simplified statement in Corollary \ref{CD:menplane}.  
After these explanations, it is essentially a statement of equivalence of notations, codified by $\rho^*$ and $\bar\rho^*$.

\begin{cor} \label{CD:menplane}
Let $\PP$ be a projective plane coordinatized by a ternary ring $\fT(u,v,o,e)$ associated to points $u,v,o,e$.  Let $u,v,o$ generate the complete triangular $3$-net $\cN^\times$.  
Then we have isomorphisms 
$$
\xymatrix{
\bgr{\fT^\times K_3\full} \ar[r]^(.4){\rho^*} 
&\Omega(\hN^*,\hE\full(\hN^*))
&\Omega\full(\cN(\fT^\times)) \ar[l]_(.53){\bar\rho^*} \cong \Omega(\cN^\times).
}
$$
and 
$$
\G\big(\Omega(\hN^*,\hE\full(\hN^*))\big) \cong M(\hE\full(\hN^*))
$$
through the natural correspondence of edges to projective points, $e \mapsto \he \in \PP^*$.
\end{cor}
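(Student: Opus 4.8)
The plan is to read the statement as a bundle of three claims and to settle each by translating between the three descriptions of a single object. First I would check that $\rho^*$ is a biased-graph isomorphism $\bgr{\fT^\times K_3\full}\to\Omega(\hN^*,\hE\full(\hN^*))$. On links it is the map of Lemma~\ref{L:netintoP} with $\bar\fQ=\fT^\times$, and on the half edges $\Td_1,\Td_2,\Td_3$ it sends them to the three center-joining lines $\langle0\rangle$, $\langle0,0\rangle$, $\hZ$. Because $\bar\fQ=\fT^\times$ is the \emph{entire} multiplicative loop, the three pencils together with the three joining lines are exactly all the lines of $\PP$ through the points of $\hN$, i.e.\ the points of $\hE\full(\hN^*)$ viewed in $\PP^*$; hence $\rho^*$ is a bijection on edges, carrying the three fibers onto the three pencils through the centers $o,u,v$ and the half edges onto the three lines joining those centers. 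Balance is preserved: a triangle $\{\Tg e_{12},\Th e_{23},\Tk e_{13}\}$ is balanced $\iff\Tg\times\Th=\Tk\iff$ (Lemma~\ref{intersectionofthreelines}) the three lines are concurrent in $\PP$ $\iff$ the three dual points are collinear in $\PP^*$, which is precisely a cross-line, hence a balanced circle of the dual-$3$-net biased graph $\Omega(\hN^*,\hE\full(\hN^*))$.

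Next I would dispose of $\bar\rho^*$ and the isomorphism $\Omega\full(\cN(\fT^\times))\cong\Omega(\cN^\times)$. The latter is only the identification, recorded just before the statement, of the abstract net $\cN(\fT^\times)$ with its complete projective realization $\cN^\times$, the half edges matching the three adjoined joining lines. The map $\bar\rho^*$ is the same assignment as $\rho^*$ read through the net-to-expansion correspondence of Proposition~\ref{PD:net}, so its bijectivity and balance preservation are immediate from the first paragraph. Together these give the first displayed line of isomorphisms.

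For the matroid statement I argue in two moves. Since $\G$ is determined by the biased graph (cf.\ the last paragraph of Proposition~\ref{P:isotopism}), the isomorphism $\rho^*$ already yields $\G(\bgr{\fT^\times K_3\full})\cong\G(\Omega(\hN^*,\hE\full(\hN^*)))$; it remains to identify this abstract full frame matroid with the matroid $M(\hE\full(\hN^*))$ of projective dependence of the same points in $\PP^*$ under $e\mapsto\he$. Both are simple of rank $3$, so it suffices to match the lines with at least three points. The long lines of $\G$ are the three sets $E_{ij}\cup\{\Td_i,\Td_j\}$, and $e\mapsto\he$ sends each onto all lines of $\PP$ through one center, i.e.\ onto one of the three main lines $o^*,u^*,v^*$ of $\PP^*$. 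The only other rank-$2$ flats of $\G$ are the balanced triangles, which by the balance computation above are exactly the cross-lines, i.e.\ the lines of $\PP^*$ meeting $\hE\full(\hN^*)$ in one point of each main line. Conversely every line of $\PP^*$ carrying three net-points is either one of $o^*,u^*,v^*$ or meets the three distinct main lines in three distinct points (a line through two corners being forced to be a main line), hence is a main line or a cross-line. The lines with at least three points therefore coincide, and so do the two matroids.

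The main obstacle is this last identification: one must be sure that no line of $\PP^*$ meets $\hE\full(\hN^*)$ in three points except the main lines and the cross-lines, i.e.\ that there are no spurious collinearities---in particular among images of half edges, which land on the corners $o^*\cap u^*$, etc. This is exactly the complete-dual-$3$-net property (every point of a main line is a net point, and every cross-line carries one point of each main line) recorded in Section~\ref{dualnets} and established synthetically in \cite[Section 2]{BG6}; I expect to discharge it by appeal to that development rather than by reproving it. Everything else is the bookkeeping of the three notational systems codified by $\rho^*$ and $\bar\rho^*$, exactly as the surrounding text promises.
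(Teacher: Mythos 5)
Your proposal is correct and follows essentially the same route as the paper, which presents Corollary \ref{CD:menplane} as the special case $\bar\fQ=\fT^\times$ of Lemma \ref{L:netintoP} combined with the notational identifications set out just before the statement, deferring the identification of $\G$ with the matroid of points in $\PP^*$ to the viewpoint of \cite[Section 2]{BG6}. Your extra work---the explicit matching of rank-$2$ flats (long lines to main lines, balanced triangles to cross-lines) and the check that no spurious collinearities arise at the corners---is a sound fleshing-out of what the paper leaves as ``equivalence of notations,'' not a different argument.
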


The next result provides criteria for (regular) embedding of a $3$-net in a plane as a triangular net.

\begin{prop} \label{L:trinet}
Let $\fQ$ be a quasigroup, $\cN(\fQ)$ its $3$-net, and $\PP$ a projective plane.
\begin{enumerate}[{\rm(I)}]

\item The following properties are equivalent:
\begin{enumerate}[{\rm(a)}]
\item $\cN(\fQ)$ embeds as a triangular $3$-net in $\PP$.
\item There is a ternary ring $\fT$ coordinatizing $\PP$ such that $\fQ$ is isostrophic to a subquasigroup of $\fT^\times$.
\item There is a ternary ring $\fT$ coordinatizing $\PP$ such that $\fQ$ is isotopic to a subloop of $\fT^\times$.
\item (If $\fQ$ is a loop.)  There is a ternary ring $\fT$ coordinatizing $\PP$ such that $\fQ$ is isomorphic to a subloop of $\fT^\times$.
\end{enumerate}

\item The following properties are also equivalent:
\begin{enumerate}[{\rm(a$^*$)}]
\item $\cN(\fQ)$ embeds as a triangular dual $3$-net in $\PP$.
\item There is a ternary ring $\fT$ such that $\PP \cong \PP_\fT^*$ and $\fQ$ is isostrophic to a subquasigroup of $\fT^\diamond$.
\item There is a ternary ring $\fT$ such that $\PP \cong \PP_\fT^*$ and $\fQ$ is isotopic to a subloop of $(\fT^\diamond)\opp$.
\item (If $\fQ$ is a loop.)  There is a ternary ring $\fT$ such that $\PP \cong \PP_\fT^*$ and $\fQ$ is isomorphic to a subloop of $(\fT^\diamond)\opp$.
\end{enumerate}
\end{enumerate}
\end{prop}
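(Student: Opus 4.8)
The plan is to prove Part~(I) by a short cycle of implications — the easy direction from Lemma~\ref{L:netintoP} and the hard direction by coordinatizing the plane from the embedding — and then to obtain Part~(II) from Part~(I) by dualizing. Almost all the geometry is already packaged in the preceding lemmas, so the proposition mostly chains them together.

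\textbf{Part (I).} The implications (d)$\Rightarrow$(c)$\Rightarrow$(b) are immediate, since an isomorphism is a special isotopy, an isotopy a special isostrophe, and a subloop a subquasigroup. For (b)$\Rightarrow$(a) I would invoke Lemma~\ref{L:netintoP}(I): if $\fQ$ is isostrophic to a subquasigroup $\bar\fQ\subseteq\fT^\times$ for some $\fT$ coordinatizing $\PP$, then $\cN(\bar\fQ)$ embeds in $\PP_\fT\cong\PP$ as a triangular $3$-net. Because isostrophic quasigroups determine the same \emph{unlabelled} net (Section~\ref{absnets}) and a triangular embedding is indifferent to which pencil is assigned to which center — any permutation of the three centers is permitted — the net $\cN(\fQ)$ itself embeds as a triangular $3$-net. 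That settles the easy half.

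For the converse (a)$\Rightarrow$(c) I would argue geometrically. Given a triangular embedding of $\cN(\fQ)$, take its three noncollinear centers as $o,u,v$ and choose a fourth point $e$ with no three of $o,u,v,e$ collinear, producing a coordinatizing ternary ring $\fT=\fT(u,v,o,e)$ with $\PP\cong\PP_\fT$ (Theorem~\ref{pst}). By Lemma~\ref{intersectionofthreelines}, three net lines through the respective centers are concurrent exactly when $\Tg\times\Th=\Tk$ in $\fT^\times$; hence the net's quasigroup operation is carried, under the chosen bijections, onto a relabelling of a subquasigroup of $\fT^\times$, so $\fQ$ is isostrophic to a subquasigroup of $\fT^\times$. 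To sharpen this to~(c) I would apply Lemma~\ref{L:subisotope} to replace $\bar\fQ\subseteq\fT^\times$ by loop isotopes standing in a subloop relation; the step needing care is that the loop isotope of $\fT^\times$ must again be the multiplicative loop of a coordinatization of the \emph{same} plane, which I would arrange by moving the unit point $e$, since a change of unit realizes precisely a principal isotopy of $\fT^\times$. When $\fQ$ is already a loop I would adjust $e$ further so that the subloop is literally isomorphic to $\fQ$, yielding~(d).

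\textbf{Part (II).} I would deduce this from Part~(I) by passing to the dual plane. Dualizing $\PP$ interchanges the three main lines of a dual $3$-net with three centers, the points of the dual net with lines through those centers, and the cross-lines with net points, with triangularity (nonconcurrent main lines) corresponding to noncollinear centers; thus (a$^*$) holds in $\PP$ iff $\cN(\fQ)$ embeds as a triangular $3$-net in $\PP^*$, to which Part~(I) applies. Translating the resulting ternary ring back through $\PP_\fT^*=\PP_{\fT^*}$ and the identity $\fT^\diamond=(\fT^*)^\times$ (equivalently $\times^*=\diamond\opp$, Equation~\eqref{E:diamonddual}) converts the multiplicative condition of Part~(I) into the diamond condition of (b$^*$)–(d$^*$). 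Concretely, Lemma~\ref{L:netintoP}(II) supplies sufficiency and Lemma~\ref{collinearpointsinaplane}, whose collinearity criterion is governed by $\diamond$, supplies necessity, mirroring Part~(I) step by step; the opposite in $(\fT^\diamond)\opp$ that appears in (c$^*$),(d$^*$) is exactly the conjugation $\times^*=\diamond\opp$, absorbed by the automorphism of $K_3$ implicit in isostrophe.

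\textbf{Main obstacle.} I expect the crux to be the necessity half of Part~(I): converting ``isostrophic to a subquasigroup of $\fT^\times$'' into ``isotopic (or, for a loop, isomorphic) to a \emph{subloop} of the multiplicative loop of a coordinatization of the same plane.'' The abstract loop-ification is handled by Lemma~\ref{L:subisotope}, but matching it with an actual recoordinatization — realizing principal isotopy by moving the unit point and conjugation by permuting the centers, all while preserving $\PP_\fT\cong\PP$ — is the delicate point, since isotopic loops need not be isomorphic and only the freedom in the coordinate quadruple closes that gap.
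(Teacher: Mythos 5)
Your skeleton is the paper's: (d)$\Rightarrow$(c)$\Rightarrow$(b) trivially, (b)$\Rightarrow$(a) via isostrophe-invariance of the net plus Lemma~\ref{L:netintoP}, and Part~(II) by duality through $(\fT^\diamond)\opp=(\fT^*)^\times$. The genuine gap is in your (a)$\Rightarrow$(c). With an \emph{arbitrary} unit point $e$, Lemma~\ref{intersectionofthreelines} gives you not one map but \emph{three distinct injections} of $\fQ$ into $\fT^\times$, one per pencil --- say $\Qg\mapsto\Tg_{12}$, $\Qh\mapsto\Th_{23}$, $\Qk\mapsto\Tk_{13}$, reading off the coordinates $\langle \Tg_{12}\rangle$, $\langle \Th_{23},0\rangle$, $\langle 0,\Tk_{13}\rangle$ --- satisfying $\Qg\cdot\Qh=\Qk \iff \Tg_{12}\times\Th_{23}=\Tk_{13}$. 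Their images are in general three different subsets of $\fT^\times$, and none of them need be closed under $\times$. What this exhibits is only that $\fQ$ is isotopic to a subquasigroup of an \emph{isotope} of $\fT^\times$ (this is Proposition~\ref{PD:subnet} applied to the subnet relation with the complete triangular net), which is strictly weaker than statement (b), a subquasigroup of $\fT^\times$ itself. So your sentence ``hence \dots\ $\fQ$ is isostrophic to a subquasigroup of $\fT^\times$'' is unjustified. The proposed repair inherits the problem: Lemma~\ref{L:subisotope} hands you a subloop of a \emph{principal isotope} of $\fT^\times$, and your claim that such an isotope ``must again be the multiplicative loop of a coordinatization of the same plane \dots\ since a change of unit realizes precisely a principal isotopy of $\fT^\times$'' is true but is exactly the unproven crux of the implication; it is not a fact available to cite, and proving it amounts to redoing the direct argument.

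The paper closes this gap constructively, and this construction is the one piece of real content your proposal lacks: first isotope $\fQ$ to a loop; then choose the unit point of the coordinate system to be $e:=L_{12}(1_\fQ)\wedge L_{23}(1_\fQ)$, which lies on $L_{13}(1_\fQ)$ because the three identity lines are concurrent. With this choice all three label maps send $1_\fQ\mapsto 1\in\fT$, and a short computation then shows the three maps \emph{coincide}: from $1\times\Th_{23}=\Tk_{13} \iff 1_\fQ\cdot\Qh=\Qk \iff \Qh=\Qk$ one gets $\Th_{23}=\Th_{13}$, and symmetrically $\Tg_{12}=\Tg_{13}$. Only after this identification is there a single multiplicative injection of $\fQ$ onto a subloop of $\fT^\times$, which gives (d) directly and (c), (b) after undoing the initial isotopy; no appeal to Lemma~\ref{L:subisotope} or to a recoordinatization-equals-isotopy principle is needed. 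If you prefer to keep your ``move $e$'' formulation, you must prove the equivalence you assert --- that for each point $p$ of the complete triangular net on $o,u,v$, the loop isotope of its quasigroup at $p$ is isomorphic to $\fT(u,v,o,p)^\times$ --- and that proof is precisely the argument above, so the detour saves nothing.
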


Recall from Lemma \ref{P:Tlinear} that when $\fT$ is linear, the condition that $\fQ$ is isostrophic to a subquasigroup of $\fT^\diamond$, or isotopic or isomorphic to a subloop of $(\fT^\diamond)\opp$, can be replaced by the condition that $\fQ$ is, respectively, isostrophic to a subquasigroup of $\fT^\times$, or isotopic or isomorphic to a subloop of $(\fT^\times)\opp$.

(A technical problem:  We would like to be able to say in (c) and (c${}^*$) that $\fQ$ is isomorphic to a subquasigroup, but we do not know that to be true.  We leave it to people more expert in ternary rings to decide.)

\begin{proof}
The corresponding statements in (I) and (II) are equivalent by duality, the fact that $(\fT^\diamond)\opp = (\fT^*)^\times$ by Equation \eqref{E:diamonddual}, and the fact that a quasigroup is isostrophic to its own opposite.

Clearly, (d) $\implies$ (c) $\implies$ (b).  

Suppose (b) $\fQ$ is isostrophic to $\bar\fQ \subseteq \fT^\times$.  Since $\cN(\fQ)$ is invariant under isostrophe, we may as well assume $\fQ=\bar\fQ$.  Then we apply Lemma \ref{L:netintoP} to deduce (a).  

Now, assuming (a) $\cN(\fQ)$ embeds as a triangular $3$-net $\cN$ in $\PP$, we prove (d) if $\fQ$ is a loop and (c) in any case.  If $\fQ$ is not a loop, convert it to a loop by an isotopism.  We may identify $\cN(\fQ)$ with the embedded net $\cN$ since they are isomorphic.  
Write $L_{ij}(\Qg)$ for the line in $\cN_{ij}$ that corresponds to $\Qg \in \fQ$ and let the centers be $o,u,v$, defined as the common points of the pencils $\cN_{12}$, $\cN_{23}$, $\cN_{13}$, respectively.  
Let $e := L_{12}(1_\fQ) \wedge L_{23}(1_\fQ)$.  Then $i = L_{13}(1_\fQ) \wedge ou$.  Also, $e \in L_{13}(1_\fQ)$ because the three lines $L_{ij}(1_\fQ)$ of the identity in $\fQ$ are concurrent.  
This choice defines a coordinate system using the ternary ring $\fT(u,v,o,e)$ in which the lines of $\cN_{12}$ meet $ou$ in points other than $o$ and $u$; also, $i=[1,0]$ so that $1_\fQ$ is identified with $1\in\fT$.  

The exact correspondence is that the lines $L_{12}(\Qg)$, $L_{23}(\Qh)$, $L_{13}(\Qk)$ in $\cN(\fQ)$ become $\langle \Tg \rangle$, $\langle \Th, 0 \rangle$, $\langle 0, \Tk \rangle$ in the plane.  
To prove that, first consider $L_{12}(\Qg)$.  It contains $v$ and meets $ou$ in a point other than $o$ and $u$, so it corresponds to a projective line $\langle a \rangle$ for some $a\in\fT^\times$.  Define $\Tg_{12}:=a$.  
Next, consider $L_{23}(\Qh)$.  It contains $o$ and meets $uv$ in a point other than $u$ and $v$; thus, it corresponds to a projective line of the form $\langle m,0 \rangle$.  Define $\Th_{23}:=m$.
Finally, consider $L_{13}(\Qk)$.  It contains $u$ and meets $ov$ in a point other than $o$ and $v$; so it corresponds to a projective line $\langle 0,b \rangle$.  Define $\Tk_{13}:=b$.

We now have three mappings $\fQ \embeds \fT^\times$.  ($0\in\fT$ is not in the range of any of them because none of the points $o,u,v$ is a point of $\cN$.)  We should prove they are the same, but first we prove the operations in $\fQ$ and in $\fT^\times$ agree.  
Consider $\Qg,\Qh,\Qk\in\fQ$.  They satisfy $\Qg\cdot\Qh=\Qk$ 
$\iff$ (by Lemma \ref{intersectionofthreelines}) the lines $L_{12}(\Qg), L_{23}(\Qh), L_{13}(\Qk)$ in $\cN(\fQ)$ are concurrent in a point of $\cN(\fQ)$ 
$\iff$ (because of how $\cN(\fQ)$ is embedded in $\PP$) the lines $\langle \Tg_{12} \rangle, \langle \Th_{23}, 0 \rangle, \langle 0, \Tk_{13} \rangle$ in $\PP$ are concurrent in a point of $\PP$ 
$\iff$ (by Lemma \ref{intersectionofthreelines}) $\Tg_{12}\times\Th_{23} = \Tk_{13}$.  
The identity of $\fQ$ is carried to that of $\fT^\times$ by all three mappings because the point $e$ is the point of concurrence of the identity lines in all three pencils.  It follows that $1 \times \Th_{23} = \Tk_{13}$, equivalently $1_\fQ \cdot \Qh = \Qk$, i.e., $\Qh=\Qk$; therefore, $\Th_{23}=\Th_{13}$ for $\Qh \in \fQ$.  Similarly, $\Tg_{12}=\Tg_{13}$ for $\Qg \in\fQ$.  This proves the three mappings are the same and hence that $\Qg\cdot\Qh=\Qk$ $\iff$ $\Tg\times\Th = \Tk$.  
Thus, $\fQ$ is (embedded as) a subloop of $\fT^\times$; allowing for the initial isotopism, if any, we obtain (c) and (d).
\end{proof}

Our main purpose is to characterize matroid embedding.  That is the next result.  It is mainly a reinterpretation of Proposition \ref{L:trinet}.

\begin{thm} \label{parte1}
Let $\PP$ be a projective plane and let $ \fQ$  be a quasigroup.  
\begin{enumerate}[{\rm (I)}]

\item The following properties are equivalent:
\begin{enumerate}[{\rm(a)}]
\item $\G(\fQ K_3)$ embeds as lines in $\PP$.
\item $G(\fQ K_3)$ embeds as a triangular $3$-net of lines in $\PP$.
\item $\G(\fQ K_3)$ embeds as points in $\PP^*$.
\item $G(\fQ K_3)$ embeds as a dual triangular $3$-net of points in $\PP^*$.
\item There is a ternary ring $\fT$ coordinatizing $\PP$ such that $\fQ$ is isostrophic to a subquasigroup of $\fT^\times$.
\item There is a ternary ring $\fT$ coordinatizing $\PP$ such that $\fQ$ is isotopic to a subloop of $\fT^\times$.
\item (If $\fQ$ is a loop.)  There is a ternary ring $\fT$ coordinatizing $\PP$ such that $\fQ$ is isomorphic to a subloop of $\fT^\times$.
\end{enumerate}

\item The following properties are also equivalent:
\begin{enumerate}[{\rm(a)}]
\item $\G(\fQ K_3)$ embeds as points in $\PP$.
\item $G(\fQ K_3)$ embeds as the points of a dual triangular $3$-net in $\PP$.
\item There is a ternary ring $\fT$ such that $\PP \cong \PP_\fT$ and $\fQ$ is isostrophic to a subquasigroup of $\fT^\diamond$.
\item There is a ternary ring $\fT$ such that $\PP \cong \PP_\fT$ and $\fQ$ is isotopic to a subloop of $(\fT^\diamond)\opp$.
\item (If $\fQ$ is a loop.)  There is a ternary ring $\fT$ such that $\PP \cong \PP_\fT$ and $\fQ$ is isomorphic to a subloop of $(\fT^\diamond)\opp$.
\end{enumerate}
\end{enumerate}
\end{thm}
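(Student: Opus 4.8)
The plan is to treat Theorem~\ref{parte1} as a reinterpretation of Proposition~\ref{L:trinet}, using two translation mechanisms and importing the algebra wholesale. The first mechanism is the cryptomorphic dictionary of Section~\ref{qnbxm} relating matroid representations to $3$-net embeddings; the second is the point/line duality between $\PP$ and its dual $\PP^*$.

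For Part (I) I would first record the two combinatorial equivalences. By Section~\ref{qnbxm}, a representation of $\G(\fQ K_3)$ by lines in $\PP$ is the same datum as an embedding of $\cN(\fQ)$ as a triangular $3$-net, with the three half edges mapping to the lines joining the centers; deleting those three lines leaves a representation of $G(\fQ K_3)$ as the net itself, so (a) $\iff$ (b). (That the net must be triangular rather than affine is the full-frame alternative of Proposition~\ref{L:canonical}: the three half edges of $\fQ K_3\full$ map to three non-concurrent lines, and each fiber $p\inv(e_{ij})$ lies on the long line spanned by the half edges at $v_i$ and $v_j$, so its lines are concurrent at a center and the three centers are noncollinear.) Planar duality carries lines of $\PP$ to points of $\PP^*$, so a line representation in $\PP$ is verbatim a point representation in $\PP^*$, giving (a) $\iff$ (c) and (b) $\iff$ (d). Finally, (b) is the matroid form of statement (a) of Proposition~\ref{L:trinet}(I), and (e), (f), (g) are its statements (b), (c), (d); hence (b) $\iff$ (e) $\iff$ (f) $\iff$ (g), completing Part (I).

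For Part (II) I would deduce everything from Part (I) applied to the dual plane. By duality, $\G(\fQ K_3)$ embeds as points in $\PP$ if and only if it embeds as lines in $\PP^*$, and (a) $\iff$ (b) follows from Section~\ref{qnbxm} as above. Applying Part (I) to $\PP^*$, this holds if and only if there is a ternary ring $\fS$ with $\PP^* \cong \PP_\fS$ and $\fQ$ isostrophic to a subquasigroup of $\fS^\times$. Put $\fT := \fS^*$; then $\fT^* = \fS$, and from $\PP_\fT^* = \PP_{\fT^*}$ we get $\PP^* \cong \PP_\fS = \PP_\fT^*$, whence $\PP \cong \PP_\fT$ on dualizing. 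By Equation~\eqref{E:diamonddual}, $\fS^\times = (\fT^*)^\times = (\fT^\diamond)\opp$, so $\fQ$ is isostrophic to a subquasigroup of $(\fT^\diamond)\opp$; since the opposite of a quasigroup is one of its conjugates, this is the same as being isostrophic to a subquasigroup of $\fT^\diamond$, namely (c). The isotopy and isomorphism clauses (d), (e) come directly from clauses (f), (g) of Part (I) for $\PP^*$, which already produce subloops of $\fS^\times = (\fT^\diamond)\opp$.

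The bulk of the work being only reinterpretation, I expect the two delicate points to be the geometric identification forcing a triangular (not affine) net in (a) $\iff$ (b), which leans on Proposition~\ref{L:canonical} and the half-edge incidences, and the ternary-ring bookkeeping of Part (II): passing correctly from $\fS^\times$ on $\PP^*$ to $\fT^\diamond$ on $\PP$ via $\fT = \fS^*$ and Equation~\eqref{E:diamonddual}, and checking that $(\fT^\diamond)\opp$ may be replaced by $\fT^\diamond$ under isostrophe. Everything else is bookkeeping already licensed by Section~\ref{qnbxm} and Proposition~\ref{L:trinet}.
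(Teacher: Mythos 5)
Your proposal is correct and takes essentially the same route as the paper's proof: the geometric equivalence (a) $\iff$ (b) via the half-edge/fiber incidence argument (half edges $\to$ nonconcurrent lines because they have rank 3, fibers $\to$ pencils through the centers $P_{ij}$), duality for (c) and (d), Proposition \ref{L:trinet} for the algebraic clauses, and dualization of Part (I) for Part (II). Your careful ternary-ring bookkeeping via $\fT := \fS^*$ and Equation \eqref{E:diamonddual} is simply a spelled-out version of the paper's one-line ``Part (II) is the dual of (I) with $\fT^*$ changed to $\fT$.''
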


We remind the reader that $(\fT^*)^\times = (\fT^\diamond)\opp$.  It makes sense to view (IIc, d) as embedding $\fQ\opp$ in the points of $\PP$ since then we can simply say $\fQ\opp$ is isotopic or isomorphic to a subloop of $\fT^\diamond$.

\begin{proof}
First we prove that $\G(\fQ K_3)$ embeds as a matroid of lines if and only if $\cN(\fQ)$ embeds as a triangular $3$-net.  

Suppose $\cN(\fQ)$ embeds in the plane as a triangular $3$-net.  By adjoining the lines generated by the centers we obtain an embedding of $\G(\fQ K_3)$.  

Conversely, suppose that $\G(\fQ K_3)$ embeds as lines.  The half edges $\Qd_i$ embed as lines $L_i$, which are nonconcurrent because the half edges have rank 3; this implies that $\codim(L_1\wedge L_2\wedge L_3) = \rk(L_1\wedge L_2\wedge L_3) = 3$.  As the matroid line spanned by $\Qd_i$ and $\Qd_j$ contains all edges $ge_{ij}$ (for $g\in\fQ$), the projective lines $L_{ij}(g)$ representing those edges contain the intersection point $P_{ij} := L_i \wedge L_j$; it follows that all the lines representing links $ge_{ij}$ concur in the point $P_{ij}$, which is therefore the center for one pencil of a $3$-net $\cN$ that consists of all the projective lines $L_{ij}(g)$ (for $1\leq i < j \leq 3$ and $g\in\fQ$).  The net is triangular because, if the centers $P_{ij}$ were collinear, the lines $L_i$ would coincide and not have rank 3.  
Thus, $\G(\fQ K_3)$ embeds as the triangular $3$-net $\cN$ of projective lines together with the three lines $l_i$ generated by the three noncollinear centers $P_{ij}$, and in particular, that gives an embedding of $\cN(\fQ)$ as a triangular $3$-net in $\PP$.

That proves the desired equivalence; we obtain (I) by applying Proposition \ref{L:trinet}.  Part (II) is the dual of (I) with $\fT^*$ changed to $\fT$.
\end{proof}

We could possibly have shortened the proof of Theorem \ref{parte1} by going directly from the matroid to the plane and its ternary rings, but we believe our three-step proof sheds more light on the relationships among biased expansion graphs, $3$-nets, and planes and that the intermediate results of Proposition \ref{L:trinet} about $3$-nets are independently interesting.

\begin{prob}\label{Pr:subqgp}
Can we replace isotopy to a subloop in parts (I)(c) of Proposition \ref{L:trinet} and (I)(f) of Theorem \ref{parte1} by isomorphism to a subquasigroup?  Though it appears unlikely, we could not be sure.
\end{prob}

When $\fT$ is linear, in Theorem \ref{parte1} isostrophe of $\fQ$ to a subquasigroup of $\fT^\diamond$ can be replaced by isostrophe of $\fQ$ to a subquasigroup of $\fT^\times$ and isotopy or isomorphism of $\fQ$ to a subloop of $(\fT^\diamond)\opp$ can be replaced by isotopy or isomorphism of $\fQ$ to a subloop of $(\fT^\times)\opp$.

The reader should observe that, according to Theorem \ref{parte1}, there is no automatic connection between point and line embeddability of $\G(\fQ K_3)$ in $\PP$.

We conclude the general treatment by interpreting cross-closure of a subset of a point representation of $\G(\fQ K_3)$ in terms of the algebra of $\fQ$.  

\begin{cor} \label{CD:triplane}
Let $\hN$ be a basis for a projective plane $\PP$ and let $\fQ$ be a quasigroup.  
Suppose $\cN(\fQ)$ is embedded as a dual triangular $3$-net $\hE \subseteq \hE(\hN)$ in $\PP$; equivalently, $\hE$ is a point representation of  $\G(\fQ K_3)$.  
If $\hE_1$ is a cross-closed subset of $\hE$ that is not contained in a line, then there is a loop $\fQ_1$ that is a subloop of a principal loop isotope $\fQ'$ of $\fQ$ and is such that $\bgr{\fQ_1 K_3} \cong \Omega(\hN,\hE_1)$ and $\G(\fQ_1 K_3) \cong M(\hE_1 \cup \hN)$ by the natural correspondence $e \mapsto \he$.  
\end{cor}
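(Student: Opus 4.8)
The plan is to convert the geometry of $\hE_1$ into biased-graph language, apply the subnet correspondence to obtain a subquasigroup of an isotope, and then normalize to a subloop of a principal loop isotope using the device of Lemma~\ref{L:subisotope}.

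First I would record the dictionary from Section~\ref{dualnets}: the embedded dual triangular $3$-net $(\hN,\hE)$ corresponds to the biased expansion $\Omega(\hN,\hE)\cong\bgr{\fQ K_3}$, with the points of $\hE$ as edges, the three main lines as the edge fibers $p\inv(e_{ij})$, and the cross-lines as balanced triangles. The first substantive step is to read off from the hypotheses that $\hE_1$ is a spanning dual sub-$3$-net. Since $\hE_1$ is cross-closed but not contained in a line, the flat it generates is neither empty, a point, nor a single cross-line, so it must be all of $\PP$; cross-closure then means that whenever two points of $\hE_1$ lie on different main lines, the third point of the cross-line they determine also lies in $\hE_1$. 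A short argument (two such points generate a cross-line meeting the third main line in a point of $\hE_1$, and $\hE_1$ cannot be concentrated on one main line since it spans $\PP$) shows $\hE_1$ meets all three main lines. Translating back, $\Omega(\hN,\hE_1)$ is a spanning, connected, balance-closed subgraph of $\Omega(\hN,\hE)$ with no balanced digon.

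Next I would invoke Proposition~\ref{PD:subnet}: a subgraph of this type is exactly a biased expansion $\bgr{\fQ_1 K_3}$ in which $\fQ_1$ is a subquasigroup of an isotope of $\fQ$, giving $\Omega(\hN,\hE_1)\cong\bgr{\fQ_1 K_3}$. To upgrade ``isotope'' to ``principal loop isotope'' with $\fQ_1$ a genuine subloop, I would place the identity inside $\hE_1$, imitating Lemma~\ref{L:subisotope}. Choose a cross-line contained in $\hE_1$ (one exists since $\hE_1$ spans and is cross-closed); it determines elements $a,b\in\fQ$ with $a,b\in\fQ_1$. Form the principal isotope $\fQ':=(\fQ,*)$ with $x*y:=R_b\inv(x)\cdot L_a\inv(y)$, where $R_b(x)=x\cdot b$ and $L_a(y)=a\cdot y$; this is a loop with identity $a\cdot b$, and the isotopism $(R_b\inv,L_a\inv,\mathrm{id})$ exhibits it as principal. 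Because $\fQ_1$ is a subquasigroup containing $a$ and $b$, it is closed under $*$ and contains $a\cdot b$, hence is a subloop of $\fQ'$. By Proposition~\ref{P:isotopism} the passage from $(\fQ_1,\cdot)$ to $(\fQ_1,*)$ does not disturb the biased expansion up to stable isomorphism, so $\bgr{\fQ_1 K_3}\cong\Omega(\hN,\hE_1)$ still holds.

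Finally, adjoining the three centers $\hN$ to $\hE_1$ corresponds to adjoining half edges at the three nodes of $\bgr{\fQ_1 K_3}$, so $M(\hE_1\cup\hN)$ is the full frame matroid $\G(\fQ_1 K_3)=G(\bgr{\fQ_1 K_3}\full)$; under $e\mapsto\he$ the frame circuits of Figure~\ref{F:framecirc3} match the projective dependencies (cross-lines with balanced three-point lines, main lines with the long lines through pairs of centers, unbalanced figures with the remaining dependencies), yielding $\G(\fQ_1 K_3)\cong M(\hE_1\cup\hN)$. I expect the main obstacle to be the first step: extracting from the stated definition of cross-closure the precise fact that a cross-closed set not lying on a line is a spanning dual sub-$3$-net meeting all three main lines, which is exactly what makes Proposition~\ref{PD:subnet} applicable. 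The loop normalization is then a routine transcription of Lemma~\ref{L:subisotope}.
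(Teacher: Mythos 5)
Your proposal is correct and is essentially the paper's own argument: translate cross-closure of $\hE_1$ into a spanning, connected, balance-closed subgraph of $\Omega(\hN,\hE)$, apply Proposition \ref{PD:subnet} to write $\Omega(\hN,\hE_1) \cong \bgr{\fQ_1 K_3}$ with $\fQ_1$ a subquasigroup of an isotope of $\fQ$, and then normalize by an isotopy to get a subloop of a loop isotope, the matroid isomorphism being automatic; the only differences are expository, since the paper cites \cite[Corollary 2.4]{BG6}(I) and Lemma \ref{L:subisotope} for the two steps you argue directly (your explicit maps $R_b\inv$, $L_a\inv$ are exactly the construction inside the proof of Lemma \ref{L:subisotope}, and they have the merit of making the word ``principal'' in the statement visible). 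One precision worth adding to your normalization step: the three fibers of $\hE_1$ are in general \emph{different} subsets of $\fQ$, so the phrase ``$\fQ_1$ is a subquasigroup containing $a$ and $b$'' is not literally meaningful; the subloop of $\fQ'=(\fQ,*)$ should be taken to be the fiber of $\hE_1$ on the third main line, which contains the identity $a\cdot b$ and is closed under $*$ and its divisions precisely because $\hE_1$ is cross-closed under completing a cross-line from any two of its three points.
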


We remind the reader that principally isotopic quasigroups generate the same $3$-net, so $\cN(\fQ') = \cN(\fQ)$, and also that if $\hN$ is a basis for $\PP$, then $\hE(\hN)$ consists of all the points on the lines joining the basis elements except the basis elements themselves; and that an embedding of $\cN(\fQ)$ as a dual triangular $3$-net is the same thing as a point representation of $\G(\fQ K_3)$ in which the half edges map to the members of $\hN$ (Section \ref{qnbxm}).

\begin{proof}
This is a reinterpretation of \cite[Corollary 2.4]{BG6}(I). 
It suffices to prove a loop $\fQ_1$ exists that makes $\bgr{\fQ' K_3} \cong \Omega(\hN,\hE_1)$; the matroid isomorphism follows automatically.

By \cite[Corollary 2.4]{BG6}(I), 
$\Omega(\hN,\hE_1)$ is a biased expansion of $K_3$, $K_2$, or $K_1$.  The first of these is the nontrivial case.  Proposition \ref{PD:subnet} implies that each such biased expansion has the form $\bgr{\fQ_1 K_3}$ where $\fQ_1$ is a subquasigroup of $\fQ$, and corresponds to a $3$-subnet $\cN(\fQ_1)$ of $\cN(\fQ)$.  By Lemma \ref{L:subisotope} we may isotope $\fQ$ and $\fQ_1$ to $\fQ'$ and $\fQ_1'$ such that the latter is a subloop of the former.  Then $\bgr{\fQ_1 K_3} \cong \Omega(\hN,\hE_1)$ under the same bijection $e \mapsto \he$ through which $\bgr{\fQ K_3} \cong \Omega(\hN,\hE)$.
\end{proof}

\subsection{Desarguesian planes}\label{q desargframe}\

In \cite[Section IV.2.1]{BG} we developed canonical representations of the frame matroid $G(\Omega)$ of a biased graph $\bgr{\Phi}$ derived from a gain graph $\Phi$ with gains in $\fF^\times$, where $\fF$ is a skew field, by vectors and also by hyperplanes in $\fF^n$.  (Canonical, for this discussion, means a representation that extends to the full frame matroid, whence we treat $\G(\Omega)$ in this section.)  We wish to compare those with the point and line representations of a quasigroup expansion $\Omega$ in this section in order to show both sections have the same representations with the same criteria for concurrence of lines and collinearity of points.  

In $\bbP^2(\fF)$ the ternary operation is $t_\fF(x,m,b) = xm+b$, computed in $\fF$; thus, $\fT^+ = \fF^+$ and $\fT^\times = \fF^\times$.  (For simplicity we omit the subscript on $t$ henceforth.)  
The dual operation is $t^*(x,m,b)=y \iff t(m,x,y)=b \iff mx+y=b \iff y=b-mx.$  
The diamond operation, defined by $t(x,y,x\diamond y)=0$, in terms of $\fF$ is $x\diamond y = -yx$.  In particular, $1\diamond x = x\diamond1 = -x$ in $\fF$; for this reason we regard $\diamond$ as expressing a ternary-ring analog of negation.

\subsubsection{One plane, two systems}\label{2systems}\

The first task is to show the equivalence of the two coordinate systems, skew-field and ternary-ring, of the plane $\bbP^2(\fF)$.  

We represent the group expansion $\fF^\times K_3$ in $\fF^3=\{(x_1,x_2,x_3): x_1,x_2,x_3 \in \fF\}$.  Projective coordinates in the plane $\bbP^2(\fF)$ are homogeneous coordinates $[x_1,x_2,x_3]$, the square brackets on a triple indicating that scaling the coordinates (multiplication on the left by a nonzero element of $\fF$) denotes the same projective point.  

The translation of homogeneous coordinates to the ternary ring coordinates we use for a plane $\PP_\fT$ is slightly complicated.  Vectors correspond to projective points by treating $x_1$ as the homogenizing variable so that, generically, $[x_1,x_2,x_3] = [1,x,y]$ where $[x,y]$ are affine coordinates.  The complete formulas are
\begin{align}\label{E:pointsFT}
\begin{aligned}{}
[x_1,x_2,x_3] &= [x_1\inv x_2,x_1\inv x_3] &\text{ if } x_1\neq0, \\
[0,x_2,x_3] &= [x_2\inv x_3] &\text{ if } x_2\neq0,  \\
[0,0,x_3] &= \hz &\text{ if } x_3\neq0.
\end{aligned}
\end{align}
where the second notation is that we use for a plane $\PP_\fT$.  
If $x_1=0$ the point we want is the ideal point $[m]$ on the lines of slope $m$ such that $x_3=x_2m+x_1b=x_2m$ in homogeneous coordinates; thus $m=x_2\inv x_3$.  If $x_1=x_2=0$ the point is simply the ideal point $\hz$ at slope $\infty$.  

Planes in $\fF^3$ are given by equations $x_1a_1+x_2a_2+x_3a_3=0$.  Homogeneous coordinates of planes, $\langle a_1,a_2,a_3 \rangle$, are obtained by scaling on the right, since that preserves the points of the plane.  These planes correspond to lines in the projective plane by 
\begin{align}\label{E:linesFT}
\begin{aligned}
\langle a_1,a_2,a_3 \rangle &= \langle -a_2a_3\inv,-a_1a_3\inv \rangle &\text{ if } a_3\neq0, \\
\langle a_1,a_2,0 \rangle &= \langle -a_1a_2\inv \rangle &\text{ if } a_2\neq0,  \\
\langle a_1,0,0 \rangle &= \hZ &\text{ if } a_1\neq0,
\end{aligned}
\end{align}
where the second notation is that for $\PP_\fT$: slope-intercept notation $\langle m,b \rangle$, $x$-intercept notation $\langle a \rangle$ (from $x=a$), or simply the ideal line $\hZ$.  
For the proof consider an ordinary point ($x_1=1$ after scaling) on the line.  
If $a_3\neq0$ the plane equation becomes $y = -x(a_2a_3\inv) - (a_1a_3\inv)$ so $m=-a_2a_3\inv$ and $b=-a_1a_3\inv$.  
If $a_3=0\neq a_2$ the equation becomes $x = -a_1a_2\inv$ so $a = -a_1a_2\inv$.  
Otherwise, we have the ideal line.

We omit the easy verification that the criteria for a point to be incident with a line in $\bbP^2(\fF)$ are the same whether computed in $\fF$-coordinates with addition and multiplication or in affine coordinates using $t$.  
It follows that the criteria for collinearity of three points or concurrence of three lines are the same in both methods of computation.

\subsubsection{Line representation}\label{q f line}\

We examine the hyperplanar representation first.  
We assume a quasigroup $\fQ$ that is isotopic to a subgroup $\bar\fQ$ of $\fT^\times = \fF^\times$.  Since $\fQ K_3 \cong \bar\fQ K_3$, we may as well assume $\fQ=\bar\fQ$.

The coordinatization of $\bbP^2(\fF)$ by $\fT$ gives a frame representation of $\fQ K_3$ by lines: 
\begin{align}\label{E:f lines}
\begin{aligned}
\langle \Qg \rangle \leftrightarrow \Qg e_{12},\ 
\langle \Qh,0 \rangle \leftrightarrow \Qh e_{23},\ 
\langle 0,\Qk \rangle \leftrightarrow \Qk e_{13}
\end{aligned}
\end{align}
in the system of Lemma \ref{L:netintoP}.  

On the other hand, $\fQ$ is a subgroup of $\fF^\times$, which therefore acts as a gain group for $\fQ K_3$.  
In \cite[Section IV.2.1]{BG} the edge $\Qg e_{ij}$, where $\Qg \in \fF^\times$, is represented by the $\fF^3$-plane whose equation is $x_j = x_i \Qg$.  
In Section \ref{2systems} we showed that planes in projective coordinates correspond to lines in affine coordinates by
\begin{align}\label{E:f lines D}
\begin{aligned}
\Qg e_{12} \leftrightarrow x_2 = x_1 \Qg &\leftrightarrow \langle \Qg,-1,0 \rangle = \langle \Qg \rangle, \\
\Qh e_{23} \leftrightarrow x_3 = x_2 \Qh &\leftrightarrow \langle 0,\Qh,-1 \rangle = \langle \Qh,0 \rangle, \\ 
\Qk\inv e_{31}=\Qk e_{13} \leftrightarrow x_3 = x_1 \Qk &\leftrightarrow \langle \Qk,0,-1 \rangle = \langle 0,\Qk \rangle. \\ 
\end{aligned}
\end{align}
This computation shows that an edge corresponds to the same lines in the system of \cite[Section IV.2.1]{BG} and that of this section.  
(Note that the line coordinate vectors $\langle a_1,a_2,a_3 \rangle$ in \eqref{E:f lines D} coincide with the right canonical representation of edges in \cite[Section IV.2.2]{BG}; that is because $\fF$ multiplies line coordinates on the right.)

The criterion for concurrency of the lines in \eqref{E:f lines} is that $\Qk = \Qg\times\Qh$ in $\fT$, which is $\Qk = \Qg\Qh$ in $\fF$.  
The skew field criterion for concurrence of the lines $x_2 = x_1 \Qg$, $x_3=x_2 \Qh$, $x_1=x_3 \Qk$ is that $\phi(\Qg e_{12})\phi((\Qh e_{23}))\phi((\Qk e_{13})\inv) = 1$, i.e., $\Qg\Qh=\Qk$ (\cite[Corollary IV.2.2]{BG}; again note the edge directions).  
In other words, the concurrency criterion from \cite[Part IV]{BG} is the same as that in the plane $\PP_\fT$.

\subsubsection{Point representation}\label{q f point}

Comparing the vector representation of \cite[Section IV.2.1]{BG} with the requirements of a point representation in a non-Desarguesian plane supports the belief that there is no natural representation of $\G(\fQ K_3)$ by points in $\PP$.
In \cite[Section IV.2.1]{BG} the vectors in $\fF^3$ for edges are
\begin{align}\label{E:f vectors}
\Qg e_{12} \leftrightarrow (-1,\Tg,0), \ 
\Qh e_{23} \leftrightarrow (0,-1,\Th), \ 
\Qk e_{13} \leftrightarrow (-1,0,\Tk)
\end{align}
and the vectors are dependent if and only if $\Tk = \Tg\Th$ in the gain group $\fF^\times$ (\cite[Theorem IV.2.1]{BG}; note the edge directions).    The question is how that compares with the diamond criterion of Proposition \ref{collinearpointsinaplane}.  The answer is not clear because it is not clear how $\cN(\fQ)$ should be embedded as points.

\medskip
\emph{Method 1.  Translate the Desarguesian representation.}
One method is simply to translate the representation of $\G(\fQ K_3)$ in \cite[Section IV.2.1]{BG} into affine coordinates using the system of Equations \eqref{E:pointsFT}.  The vectors \eqref{E:f vectors} become the projective points 
\begin{align}\label{E:f negpoints}
[-\Tg,0] \leftrightarrow \Qg e_{12},\ 
[-\Th] \leftrightarrow \Qh e_{23},\ 
[0,-\Tk] \leftrightarrow \Qk e_{13}.
\end{align}
By Proposition \ref{collinearpointsinaplane}, the points are collinear if and only if $(-\Tg)\diamond(-\Th)=(-\Tk)$.  Restated in $\fF$ this is $(-\Tg)(-\Th)+(-\Tk)=0$, i.e., $\Tg\Th=\Tk$, the same as the gain-group condition, as it should be.  

This translation suggests an embedding rule that assumes $\fQ$ is isotopic to a subquasigroup $\bar\fQ$ of $\fT^\times$ and maps edges to points in $\PP_\fT$ using the correspondence in Equations \eqref{E:f negpoints}.  That is not a satisfactory answer.  After all, what is $-\Tg$ in a ternary ring?  There are several possible definitions, starting with $x+(-x)=0$ and $(-x)+x=0$, which are inequivalent.  The one that should be used is the one that makes $(-\Tg)\diamond(-\Th)=(-\Tk)$ equivalent to the gain-group rule $\Tg\Th=\Tk$ (carried over from $\fQ$ by isotopy).  This equivalence cannot be expected in an arbitrary ternary ring; while it may be true in special kinds of ternary rings, we have not investigated the matter.  Thus, at best, the direct translation from skew fields is problematic.

\medskip
\emph{Method 2.  Dualize the line representation.}
The other obvious method is to dualize the line representation as in Lemma \ref{L:netintoP}(II), representing $\G(\fQ K_3)$ by points in $\PP_\fT^*$.  

The representing points are obtained by writing \eqref{E:f lines} with point notation instead of line notation:
\begin{align}\label{E:f Tpoints}
[ \Tg ]^* \leftrightarrow \Qg e_{12},\ 
[ \Th,0 ]^* \leftrightarrow \Qh e_{23},\ 
[ 0,\Tk ]^* \leftrightarrow \Qk e_{13} .
\end{align}
By Lemma \ref{collinearpointsinaplane} they are collinear if and only if $\Th \diamond^* \Tg = \Tk$; that is, $\Tg \times \Th = \Tk$, equivalent to $\Qg \cdot \Qh = \Qk$ in $\fQ$ by the isotopism.  
This agrees with the skew-field criterion for coplanarity of the vectors \eqref{E:f vectors} and it seems natural since it requires $\fQ$ be isotopic to a subquasigroup $\bar\fQ$ of $\fT^\times$.  
However, since it is merely the dual of the line representation, by adopting Method 2 we effectively give up the idea of a point representation in $\PP_\fT$.

\medskip
\emph{Method 3.  Change the projectivization.}
A third method might be to change the projectivization rule \eqref{E:pointsFT} and hope that suggests a general point representation in $\PP_\fT$.  We were unable to find a rule that works.  Variants of \eqref{E:pointsFT} led us to formulas with negation and, worse, reciprocation, neither of which exists in a usable way in all ternary rings.

\medskip
\emph{Conclusion.}  
There may be no natural system of point representation of $\G(\fQ K_3)$, governed by an embedding of $\fQ$ into a multiplicative structure in $\fT$ (whether $\times$ or $\diamond$ or some other), in a way that generalizes the embedding of a gain group $\fG$ into $\fF^\times$ as in \cite[Section IV.2]{BG}, that can apply to all ternary rings.  
At the very least, this remains an open problem.

\section{Planarity of the extended lift matroid and affine $3$-nets}\label{q lift}

Here we characterize the embeddability of the extended lift matroid of a quasigroup expansion $L_0(\fQ K_3)$ in a projective plane.  (See Section \ref{q planar} for the unextended matroid.)
An intuitive summary is that, if $\fQ$ is an additive subquasigroup of a ternary ring, this ternary ring provides the coordinates of $L_0(\fQ K_3)$ in the corresponding projective plane.  In the opposite direction, if $L_0(\fQ K_3)$ embeds in a plane, then four suitable points in its affine representation generate a ternary ring that contains $\fQ$ as an additive subquasigroup.

When $\fQ$ is an additive subgroup of a skew field $\fF$, \cite[Theorem IV.4.1]{BG} guarantees that $L_0(\fQ K_3)$ embeds in the Desarguesian projective plane coordinatized by $\fF$.  Theorem \ref{bmain} can be thought of as a planar generalization of that theorem.  

In $L_0(\fT^+ K_3)$ and $L_0(\fQ K_3)$ we denote the extra points by $\Tez$ and $\Qez$ in order to distinguish them from each other.

\subsection{Embedding as lines or points}\label{embed lift}\

We begin with criteria for embedding $3$-nets, equivalently biased expansions of $K_3$.  Again, remember that $G(\fQ K_3) = L(\fQ K_3)$ but $\G(\fQ K_3) \neq L_0(\fQ K_3)$.

\begin{lem} \label{embeddingofTKintoP}
If $\fT$ is a ternary ring and $\bar\fQ$ is a subquasigroup of $\fT^+$, then $\cN(\bar\fQ)$ embeds in $\PP_\fT$ as a dual affine $3$-net and $L(\bar\fQ K_3)$ embeds as points.  An embedding function $\theta$ is defined by
$$
\bar\theta(L_{12}(\Tg)) = \theta( \Tg e_{12}) = [\Tg], \quad 
\bar\theta(L_{23}(\Th)) = \theta(\Th e_{23}) = [0,\Th], \quad 
\bar\theta(L_{13}(\Tk)) = \theta(\Tk e_{13}) = [1,\Tk], \quad
$$
for $\Tg,\Th,\Tk \in \bar\fQ$.  The main lines of the embedding are $\bar\theta(\cL_{12}) = \hZ$, $\bar\theta(\cL_{23}) = \langle0\rangle$, and $\bar\theta(\cL_{13}) = \langle1\rangle$, concurrent in the point $\hz$.  

The embedding extends to $L_0(\bar\fQ K_3)$ with $\theta$ extended by $\theta(\Tez) = \hz$.  
\end{lem}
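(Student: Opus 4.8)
The plan is to follow the template of Lemma \ref{L:netintoP}, whose substance was reduced to a single incidence computation; here the analogous computation has already been carried out in Lemma \ref{collinearpointsinaplane}. The core fact to extract is its particular case: the three representing points $[\Tg]$, $[0,\Th]$, $[1,\Tk]$ are collinear precisely when $t(1,\Tg,\Th)=\Tk$, i.e.\ $\Tg+\Th=\Tk$ in $\fT^+$, which is exactly the condition $\Tg\cdot\Th=\Tk$ in the subquasigroup $\bar\fQ\subseteq\fT^+$, and hence the condition for $\{\Tg e_{12},\Th e_{23},\Tk e_{13}\}$ to be a balanced triangle of $\bar\fQ K_3$. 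Thus balanced circles correspond to collinear triples, which is precisely what a dual $3$-net and a point representation of $L(\bar\fQ K_3)$ require.

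First I would check that $\theta$ is well defined and injective and that the three fibers land on three distinct main lines. The images $\{[\Tg]:\Tg\in\bar\fQ\}$ are distinct ideal points on $\hZ$; the images $\{[0,\Th]\}$ are distinct affine points on $\langle0\rangle$; and $\{[1,\Tk]\}$ are distinct affine points on $\langle1\rangle$. Since ideal and affine points are distinct and $\langle0\rangle\neq\langle1\rangle$, the three fibers are pairwise disjoint, yielding $\hZ$, $\langle0\rangle$, $\langle1\rangle$ as the main lines. Reading off \eqref{E:coordinates}, each of these contains $\hz$ (because $[x],\hz\in\hZ$ and $[x,y],\hz\in\langle x\rangle$), so the main lines are concurrent in $\hz$ and the net is affine; and no net point equals $\hz$, so the net avoids the common point of the main lines. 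Invoking the particular case of Lemma \ref{collinearpointsinaplane} then identifies the cross-lines: given two net points on different main lines the remaining coordinate is forced by $\Tg+\Th=\Tk$, exists uniquely since $\fT^+$ is a loop, and lies in $\bar\fQ$ because $\bar\fQ$ is a subquasigroup of $\fT^+$; hence the generated cross-line meets the third main line in a net point. This establishes the dual affine $3$-net and, equivalently, the point representation of $L(\bar\fQ K_3)=G(\bar\fQ K_3)$.

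For the extension to $L_0(\bar\fQ K_3)$ I would set $\theta(\Tez)=\hz$ and verify that the circuits of $L_0$ involving $e_0$ are faithfully represented. In a biased expansion of $K_3$ there are no half edges, so the only such circuits consist of an unbalanced triangle together with $e_0$; geometrically this must become a four-point circuit, i.e.\ four points no three of which are collinear. For an unbalanced triangle the three points $[\Tg],[0,\Th],[1,\Tk]$ are already non-collinear (since $\Tg+\Th\neq\Tk$), and adjoining $\hz$ creates no new collinearity: the line joining $\hz$ to any one of the three points is one of the main lines $\hZ,\langle0\rangle,\langle1\rangle$, and that main line contains neither of the other two net points. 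Conversely a balanced triangle already maps to a collinear triple, so $e_0$ does not enlarge it to a circuit, matching the matroid. This confirms that $\theta$ extends to a point representation of $L_0(\bar\fQ K_3)$ with $\Tez\mapsto\hz$.

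The step I expect to be the main obstacle is this last one: confirming that adjoining $\hz$ realizes exactly the extra $e_0$-circuits of $L_0$ without introducing spurious dependencies among the net points and $\hz$. The whole verification rests on the single structural fact that $\hz$ is the unique common point of the three main lines and that each main line carries exactly one fiber, so that the only incidences $\hz$ can participate in are the ``vertical'' ones already accounted for. Once that is pinned down, everything else is a direct transcription of Lemma \ref{collinearpointsinaplane}.
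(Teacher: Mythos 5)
Your proposal is correct and follows essentially the same route as the paper's proof: both reduce everything to the special case of Lemma \ref{collinearpointsinaplane} (collinearity of $[\Tg],[0,\Th],[1,\Tk]$ $\iff$ $\Tg+\Th=\Tk$ $\iff$ balance of the corresponding triangle), together with the observation that the three fibers, augmented by $\hz$, land inside the three lines $\hZ$, $\langle0\rangle$, $\langle1\rangle$ concurrent at $\hz$.

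One slip is worth correcting. The circuits of $L_0(\bar\fQ K_3)$ containing $e_0$ are not only the unbalanced triangles plus $\Tez$: since a biased expansion of $K_3$ has no balanced digons, every digon is an unbalanced circle, so each digon together with $\Tez$ is also a lift circuit (``an unbalanced figure and $e_0$''), as is, for that matter, any triple of parallel edges (a theta graph with no balanced circle) even without $e_0$. The omission is harmless: a digon lies in a single fiber, so its two image points lie on one main line, which passes through $\hz$; hence the image of digon-plus-$\Tez$ (or of three parallel edges) is a collinear triple, exactly the dependency required. This is precisely the ``vertical'' incidence your closing paragraph alludes to, so your structural setup already covers it --- it just needs to appear in the case enumeration. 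The paper's proof absorbs these cases at the outset by noting that each whole fiber together with $\Tez$ maps into a single projective line, so every line of $L_0$ that is not a balanced circle is represented collinearly, and then invoking the fact that $L_0$ has no other nontrivial lines.
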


\begin{proof}
We give the proof for $\bar\theta$, which implies that for $\theta$ because $\cN(\bar\fQ)$ is essentially equivalent to $L(\bar\fQ K_3)$.  
That means $\Omega(\cN(\bar\fQ))$ is naturally isomorphic to $\bgr{\bar\fQ K_3}$ by the mapping $\bar\theta \circ \theta\inv: L_{ij}(\Tg) \mapsto \Tg e_{ij}$ on edges, extended to nodes in the obvious way (as the identity if both base graphs $K_3$ are the same).

From the definition of $\theta$, 
\begin{align*}
\theta(\{\Tg e_{12}: \Tg \in \bar\fQ \} \cup  \{\Tez\} )  &  =   \big\{ [\Tg], \hz : \Tg \in \bar\fQ \big\} = \hZ, \\
\theta(\{\Th e_{23}: \Th \in \bar\fQ \} \cup  \{\Tez\})  &  =   \big\{ [0,\Th], \hz : \Th \in \bar\fQ \big\} = \langle 0 \rangle, \\
\theta(\{\Tk e_{13}: \Tk \in \bar\fQ \} \cup  \{\Tez\} )  &  =   \big\{ [1, \Tk], \hz : \Tk \in \bar\fQ \big\} = \langle 1 \rangle.
\end{align*}
This proves that a line of $L_0(\bar\fQ K_3)$ that is not a balanced circle is collinear in $\PP_\fT$.

The triangle $\{ \Tg e_{12}, \Th e_{23}, \Tk e_{13} \}$ in $\bar\fQ K_3$ is balanced $\iff$ $\Tg + \Th = \Tk$ $\iff$ (by Lemma \ref{collinearpointsinaplane}) $\theta(\{ \Tg e_{12}, \Th e_{23}, \Tk e_{13} \})= \{[ \Tg], [ 0, \Th ], [ 1, \Tk ] \}$ is collinear. 

Since there are no other lines in $L_0(\bar\fQ K_3)$, this completes the proof that $\theta$ embeds $L_0(\bar\fQ K_3)$ into $\PP_\fT$.
\end{proof}

A dual affine $3$-net induces a biased graph geometrically.  Suppose $\cN$ is a $3$-net embedded in $\PP$ as the dual affine $3$-net $\cN^*$ with main lines $l_{12}, l_{23}, l_{13}$, concurrent at $p$.  Construct a graph with node set $N(K_3)$ and with an edge $e_{ij}$ for each point of $\cN^* \cap l_{ij}$; this is the underlying graph.  By definition, each cross-line that meets $\cN^*$ at more than one point meets it at three points, one in each main line, corresponding to a triple of concurrent lines in $\cN$.  This triple is a balanced circle in the biased graph.  We write $\Omega(\cN^*,p)$ for this biased graph, which is a biased expansion of $K_3$ because by construction it is isomorphic to $\Omega(\cN)$.

In the special case where $\cN = \cN(\fT^+)$, the projective realization $\cN(\fT^+)^*$ is a complete dual affine $3$-net that consists of all points on the main lines $ov, iv, uv$, other than $v$ itself; we call this dual $3$-net $\cN^+$.  In terms of Lemma \ref{embeddingofTKintoP}, $\bar\fQ = \fT^+$ and $\cN(\fT^+)$ embeds as $\cN^+$ with main lines $ov=\theta(E_{12})$, $uv=\theta(E_{23})$, $iv=\theta(E_{13})$; that is, $\cN(\fT^+)^* = \cN^+ = (ov \cup uv \cup iv) \setminus \{v\}$.  
Explicit isomorphisms are analogous to those of Corollary \ref{CD:menplane}.

\begin{cor}  \label{CD:orthoring}
Let $\PP$ be a projective plane coordinatized by a ternary ring $\fT=\fT(u,v,o,e)$ and let $i=ou \wedge ve$.  Let $o,u,i,v$ generate the complete dual affine $3$-net $\cN^+$ in $\PP$.   
Then 
we have isomorphisms
$$
\xymatrix{
\bgr{\fT^\times K_3} \ar[r]^(.45){\theta} 
&\Omega(\cN^+,v)
&\Omega(\cN(\fT^+)) \ar[l]_(.45){\bar\theta}
}
$$
and 
$L_0(\fT^+ K_3) \cong M(\cN^+ \cup \{v\})$ under $\theta$ extended by $\theta(\Qez) = \he_0 := v$. 
\end{cor}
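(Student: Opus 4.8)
The plan is to read this corollary as the complete-net specialization ($\bar\fQ=\fT^+$) of Lemma~\ref{embeddingofTKintoP}, exactly parallel to the way Corollary~\ref{CD:menplane} specializes the triangular (multiplicative) embedding. Lemma~\ref{embeddingofTKintoP} already supplies the maps $\theta$ and $\bar\theta$ together with the fact that $\theta$ embeds $L_0(\bar\fQ K_3)$ into $\PP_\fT$ for every subquasigroup $\bar\fQ\subseteq\fT^+$, so most of the work is done. What remains is (i) to check that taking $\bar\fQ=\fT^+$ makes the embedding \emph{onto} the complete dual affine $3$-net $\cN^+$, and (ii) to repackage the conclusion as the two biased-graph isomorphisms and the matroid isomorphism, invoking the general cryptomorphisms recorded in Section~\ref{qnbxm}.

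First I would verify surjectivity onto $\cN^+\cup\{v\}$. Since $\bar\fQ=\fT^+$ ranges over all of $\fT$, the coordinate descriptions in Theorem~\ref{pst} give $\theta(E_{12})=\{[\Tg]:\Tg\in\fT\}$, $\theta(E_{23})=\{[0,\Th]:\Th\in\fT\}$, and $\theta(E_{13})=\{[1,\Tk]:\Tk\in\fT\}$, which are precisely the points other than $v=\hz$ on the three lines $uv=\hZ$, $ov=\langle0\rangle$, and $iv=\langle1\rangle$, respectively; all three lines pass through $\hz$ by Equation~\eqref{E:coordinates}. Hence $\theta(E_{12})\cup\theta(E_{23})\cup\theta(E_{13})=(ov\cup uv\cup iv)\setminus\{v\}=\cN^+$, and adjoining $\theta(\Qez)=\hz=v$ yields exactly $\cN^+\cup\{v\}$. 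Injectivity is immediate because the three main lines meet only in $v$ and each coordinate map is a bijection onto its line minus $v$.

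For the biased-graph isomorphisms I would use the construction of $\Omega(\cN^+,v)$ described just before the corollary: one edge $e_{ij}$ for each point of $\cN^+$ on the $ij$th main line, with a triangle balanced exactly when its three points lie on a common cross-line. Under $\theta$ the $ij$-fiber of $\bgr{\fT^+ K_3}$ maps bijectively to the net points on the $ij$th main line, and Lemma~\ref{embeddingofTKintoP} shows that $\{\Tg e_{12},\Th e_{23},\Tk e_{13}\}$ is balanced $\iff$ $\Tg+\Th=\Tk$ $\iff$ (by Lemma~\ref{collinearpointsinaplane}) the points $[\Tg],[0,\Th],[1,\Tk]$ are collinear on a cross-line. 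Thus $\theta$ preserves the edge-fiber/main-line partition and balance, so it is a fibered isomorphism $\bgr{\fT^+ K_3}\cong\Omega(\cN^+,v)$; the identical argument gives $\Omega(\cN(\fT^+))\cong\Omega(\cN^+,v)$, and as in the proof of Lemma~\ref{embeddingofTKintoP} the composite $\bar\theta\circ\theta\inv$ is the natural edge map $L_{ij}(\Tg)\mapsto\Tg e_{ij}$.

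Finally, for $L_0(\fT^+ K_3)\cong M(\cN^+\cup\{v\})$, both sides are simple rank-$3$ matroids, and $\theta$ (extended by $\Qez\mapsto v$) is a bijection of ground sets, so it suffices to match their collinear triples. By the cryptomorphism in Section~\ref{qnbxm}, the lines $\{e_0\}\cup E_{ij}$ of $L_0$ should correspond to the main lines of $\cN^+$ and the balanced triangles to the cross-lines, and Lemma~\ref{embeddingofTKintoP} already gives that every line of $L_0(\fT^+ K_3)$ maps to a collinear set in $\PP_\fT$. \emph{The main obstacle is the converse}: ruling out spurious collinearities among the image points, i.e.\ showing $M(\cN^+\cup\{v\})$ has no long line beyond the three main lines and the cross-lines. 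I would settle this by a short incidence argument: every net point lies on exactly one of the three main lines, two distinct main lines meet only at $v$, and any line other than a main line meets each main line in a single point; hence such a line contains at most one net point per main line and does not contain $v$, and one carrying three net points is forced to be a cross-line. Consequently a triple is collinear in $M(\cN^+\cup\{v\})$ if and only if its $\theta$-preimage is collinear in $L_0(\fT^+ K_3)$, and since both matroids are simple of rank~$3$ this makes $\theta$ the desired isomorphism $e\mapsto\he$.
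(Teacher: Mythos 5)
Your proposal is correct and follows essentially the same route as the paper: the corollary is treated there as the specialization $\bar\fQ=\fT^+$ of Lemma~\ref{embeddingofTKintoP}, with the identification $\cN(\fT^+)^*=\cN^+=(ov\cup uv\cup iv)\setminus\{v\}$ and the remark that the explicit isomorphisms are analogous to Corollary~\ref{CD:menplane}. The details you supply---surjectivity onto the complete net and the incidence argument ruling out spurious collinearities (every non-main line meeting $\cN^+$ misses $v$ and is a cross-line)---are exactly the verifications the paper leaves implicit, and they are carried out correctly.
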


Proposition \ref{L:affnet} gives an algebraic criterion for a $3$-net to be embeddable in $\PP$ as an affine $3$-net.

\begin{prop} \label{L:affnet} 
Let $\fQ$ be a quasigroup, $\cN(\fQ)$ its $3$-net, and $\PP$ a projective plane.
The following properties are equivalent:
\begin{enumerate}[{\rm(a)}]
\item The $3$-net $\cN(\fQ)$ embeds as a dual affine $3$-net in $\PP$.
\item There is a ternary ring $\fT$ coordinatizing $\PP$ such that $\fQ$ is isostrophic to a subquasigroup of $\fT^+$.
\item There is a ternary ring $\fT$ coordinatizing $\PP$ such that $\fQ$ is isotopic to a subloop of $\fT^+$.
\item (If $\fQ$ is a loop.)  There is a ternary ring $\fT$ coordinatizing $\PP$ such that $\fQ$ is isomorphic to a subloop of $\fT^+$.
\end{enumerate}
\end{prop}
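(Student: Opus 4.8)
The plan is to prove the cycle (d) $\Rightarrow$ (c) $\Rightarrow$ (b) $\Rightarrow$ (a) $\Rightarrow$ (c), with the refinement (a) $\Rightarrow$ (d) when $\fQ$ is a loop, following the pattern of Proposition \ref{L:trinet} but with all the multiplicative data replaced by additive data: Lemma \ref{embeddingofTKintoP} in place of Lemma \ref{L:netintoP}, and the additive clause of Lemma \ref{collinearpointsinaplane} (namely $[\Tg],[0,\Th],[1,\Tk]$ collinear $\iff \Tg+\Th=\Tk$) in place of the diamond clause. The implications (d) $\Rightarrow$ (c) $\Rightarrow$ (b) are immediate, since a subloop is a subquasigroup and isomorphism is a special case of isotopy, which is a special case of isostrophe. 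For (b) $\Rightarrow$ (a) I would argue as in Proposition \ref{L:trinet}: the net $\cN(\fQ)$ is invariant under isostrophe, so I may take $\fQ=\bar\fQ\subseteq\fT^+$ and invoke Lemma \ref{embeddingofTKintoP} directly to embed $\cN(\fQ)$ as a dual affine $3$-net.

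The real content is (a) $\Rightarrow$ (c)/(d). Assume $\cN(\fQ)$ is embedded as a dual affine $3$-net whose three main lines $l_{12},l_{23},l_{13}$ concur at a point $p$. First I would replace $\fQ$ by a principal loop isotope (which exists by the elementary fact in Section \ref{qgps}); since principally isotopic quasigroups have the same $3$-net, the same embedded net serves for this loop, which I now call $\fQ$, with identity $\epsilon$. If $\fQ$ was already a loop no isotopy is used, and this is exactly what will produce (d) rather than merely (c). I then build a coordinatizing quadruple adapted to the net: set $v:=p$, let $u$ be the net point $L_{12}(\epsilon)$ on $l_{12}$, let $o$ be $L_{23}(\epsilon)$ on $l_{23}$, and let $i$ be $L_{13}(\epsilon)$ on $l_{13}$. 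Since $\epsilon\cdot\epsilon=\epsilon$, the triple $L_{12}(\epsilon),L_{23}(\epsilon),L_{13}(\epsilon)$ is concurrent in $\cN(\fQ)$, so its image points $u,o,i$ are collinear; hence $i=ou\wedge l_{13}$, and taking any $e\in l_{13}$ with $e\neq v,i$ yields a quadruple $(u,v,o,e)$ no three of whose points are collinear. By Theorem \ref{pst} the main lines then receive the coordinates $l_{12}=uv=\hZ$, $l_{23}=ov=\langle0\rangle$, $l_{13}=ve=\langle1\rangle$, concurrent at $v=\hz$: precisely the configuration of Lemma \ref{embeddingofTKintoP}.

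In these coordinates the net points must take the forms $L_{12}(\Qg)\mapsto[\Tg]$, $L_{23}(\Qh)\mapsto[0,\Th]$, $L_{13}(\Qk)\mapsto[1,\Tk]$, defining three injections $\fQ\to\fT$, each carrying $\epsilon$ to $0$ (because $u=[0]$, $o=[0,0]$, $i=[1,0]$). By the additive clause of Lemma \ref{collinearpointsinaplane}, concurrence of a net triple—equivalently $\Qg\cdot\Qh=\Qk$—becomes $\Tg+\Th=\Tk$. The key step is to collapse the three injections into one: putting $\Qh=\epsilon$ gives $\Tg+0=\Tk$ and putting $\Qg=\epsilon$ gives $0+\Th=\Tk$, and since $x+0=0+x=x$ in the additive loop $\fT^+$ (identity $0$), all three injections agree with a single map $\iota\colon\fQ\to\fT$ satisfying $\Qg\cdot\Qh=\Qk\iff\iota(\Qg)+\iota(\Qh)=\iota(\Qk)$. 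Thus $\iota$ is an isomorphism of the loop $\fQ$ onto $(\iota(\fQ),+)$, whose operation is the restriction of that of $\fT^+$; because $\fQ$ is a loop and the divisions in $\fT^+$ are unique, $\iota(\fQ)$ is closed under the division operations as well, and so is a subloop of $\fT^+$ containing $0$. This yields (d) when $\fQ$ was a loop and, undoing the initial isotopy, (c) in general.

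I expect the main obstacle to be the bookkeeping in the coordinate construction of (a) $\Rightarrow$ (c): confirming that $(u,v,o,e)$ genuinely has no three points collinear—which hinges on recognizing $ou$ as the cross-line of the identity triple, so that it meets $l_{13}$ only in $i$—and that the three main lines acquire exactly the coordinates $\hZ,\langle0\rangle,\langle1\rangle$, so that the particular case of Lemma \ref{collinearpointsinaplane} applies word for word. Once the configuration is matched to Lemma \ref{embeddingofTKintoP}, the collapse of the three injections is purely formal. One should also note, as the paper does in Problem \ref{Pr:subqgp}, that the conclusion is isotopy (or isomorphism) to a \emph{subloop}, and not isomorphism to a subquasigroup.
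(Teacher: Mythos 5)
Your proposal is correct and takes essentially the same approach as the paper's proof: the same implication cycle, the same appeal to Lemma \ref{embeddingofTKintoP} for (b) $\Rightarrow$ (a), and for (a) $\Rightarrow$ (c)/(d) the same coordinate construction (the identity cross-line supplying $u$, $o$, $i$, the common point of the main lines serving as $v$, and $e$ chosen on $iv$), followed by the same collapse of the three maps $\fQ \to \fT$ into one via Lemma \ref{collinearpointsinaplane} and the identity of $\fT^+$. The only deviations are cosmetic: the paper additionally requires $e$ to lie in the embedded net (a condition it never uses), while you make explicit the loop-isotopy reduction and the closure-under-division detail that the paper leaves implicit.
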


\begin{proof}
The proof is similar to that of Proposition \ref{L:trinet} but with $\fT^+$ in place of $\fT^\times$.  

Clearly, (d) $\implies$ (c) (by isotoping $\fQ$ to a loop) $\implies$ (b).  

If $\fQ$ is isostrophic to a subquasigroup of $\fT^+$, we may as well assume it is a subquasigroup.   Then we apply Lemma \ref{embeddingofTKintoP} to infer (a).

Now we assume (a) $\cN(\fQ)$ embeds as a dual affine $3$-net in $\PP$.  We wish to prove (d) when $\fQ$ is a loop.  We identify $\cN(\fQ)$ with the embedded dual $3$-net $\cN^*$, whose main lines $L_{ij}$ correspond to the parallel classes $\cL_{ij}$ in $\cN(\fQ)$.  Since $\fQ$ is a loop with identity $1$, the images of the three lines $L_{ij}(1)$ of $\cN(\fQ)$ are collinear points of $\cN^*$ labeled $o_{12}\in L_{12}$, $o_{23}\in L_{23}$, and $o_{13}\in L_{13}$.  

Define $o:=o_{23}$, $u:=o_{12}$, and $i:=o_{13}$.  
Since $\cN^*$ is a dual affine $3$-net, there is a point $v$ common to all the main lines and $v$ is not a point of $\cN^*$.  Choose $e$ to be any point on $iv$ that belongs to $\cN^*$, other than $i$ and $v$.  We now have a coordinate system for $\Pi$ in terms of the ternary ring $\fT(u,v,o,e)$ in which $o_{12}=u=[0]$, $o_{23}=o=[0,0]$, and $o_{13}=i=[1,0]$.  The main line $L_{12}$ is $uv=\hZ$, while $L_{23}=ov=\langle0\rangle$ and $L_{13}=iv=\langle1\rangle$.  

In the embedding of $\cN(\fQ)$, the net lines $L_{12}(\Qg), L_{23}(\Qh), L_{13}(\Qk)$ corresponding to $\Qg, \Qh, \Qk \in \fQ$ are embedded as points with $\fT$-coordinates $[\Tg_{12}]$, $[0,\Th_{23}]$, $[1,\Tk_{13}]$.  This defines three mappings $\fQ \to \fT$ by $\Qg \mapsto \Tg_{12}$, $\Qh \mapsto \Th_{23}$, and $\Qk \mapsto \Tk_{13}$.  
In particular, $o_{12}=[0]$ implies that $1_{12}=0$, $o_{23}=[0,0]$ implies that $1_{23}=0$, and $o_{13}=[1,0]$ implies that $1_{13}=0$.

In $\cN(\fQ)$, $L_{12}(\Qg)$, $L_{23}(\Qh)$, $L_{13}(\Qk)$ are concurrent in a point $\iff \Qg \cdot \Qh = \Qk$, so in the plane embedding the corresponding points are collinear $\iff \Qg \cdot \Qh = \Qk$.  By Lemma \ref{collinearpointsinaplane} they are collinear $\iff \Tg_{12} + \Th_{23} = \Tk_{13}$.  It follows that $\Qg \cdot \Qh = \Qk \iff \Tg_{12} + \Th_{23} = \Tk_{13}$.

Set $\Qg=1$; then $1 \cdot \Qh = \Qk$ in $\fQ$ implies that $\Qh=\Qk$ and also that in $\fT$, $\Th_{23} = 0 + \Th_{23} = \Tk_{13} = \Th_{13}$, since $\Qh=\Qk$.  
This proves that two of the mappings agree; similarly, they agree with the third.  So, we have one function $\fQ \to \fT$ that carries $1$ to $0$ and preserves the operation.  It follows that the function is a monomorphism from $\fQ$ to $\fT^+$.  That establishes (d).
\end{proof}

\begin{thm}\label{bmain} 
Let $\PP$ be a projective plane and let $\fQ$ be a quasigroup.  
The following properties are equivalent:
\begin{enumerate}[{\rm(a)}]
\item $L_0(\fQ K_3)$ embeds as points in $\PP$.
\item $L(\fQ K_3)$ embeds as a dual affine $3$-net in $\PP$.
\item $L_0(\fQ K_3)$ embeds as lines in $\PP^*$.
\item $L(\fQ K_3)$ embeds as an affine $3$-net in $\PP^*$.
\item There exists a ternary ring $\fT$ such that $\PP = \PP_\fT$ and $\fQ$ is isostrophic to a subquasigroup of $\fT^+$.
\item There exists a ternary ring $\fT$ such that $\PP = \PP_\fT$ and $\fQ$ is isotopic to a subloop of $\fT^+$.
\item (If $\fQ$ is a loop.)  There is a ternary ring $\fT$ coordinatizing $\PP$ such that $\fQ$ is isomorphic to a subloop of $\fT^+$.
\end{enumerate}
\end{thm}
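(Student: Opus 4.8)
My plan is to follow the three-step pattern of the proof of Theorem \ref{parte1}, with the triangular $3$-net replaced by a dual affine $3$-net and the multiplicative loop $\fT^\times$ replaced by the additive loop $\fT^+$. The geometric core is the equivalence (a) $\iff$ (b); granting it, the equivalences (a) $\iff$ (c) and (b) $\iff$ (d) come from projective duality, and (b) $\iff$ (e), (f), (g) is precisely Proposition \ref{L:affnet}.

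For the core equivalence I would read off the matroid structure of $L_0(\fQ K_3)$, using the extra point $\Qez$. Assuming $\#\fQ\ge2$ (the case $\#\fQ=1$ being immediate), each parallel class $E_{ij}$ is an unbalanced rank-$2$ set, and since a lift circuit may consist of an unbalanced figure together with $\Qez$ (here, an unbalanced digon in $E_{ij}$ together with $\Qez$; see Figure \ref{F:liftcirc3}), the point $\Qez$ lies on the matroid line spanned by $E_{ij}$, for every $ij$. The remaining rank-$2$ flats are exactly the balanced triangles $\{\Qg e_{12},\Qh e_{23},\Qk e_{13}\}$ with $\Qg\cdot\Qh=\Qk$. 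Hence, if (a) holds, the three projective lines spanned by the images of $E_{12}, E_{23}, E_{13}$ all pass through the image of $\Qez$, so they are concurrent; the balanced triangles become collinear triples, one point on each of these main lines; and, because the given matroid embedding is faithful, no other incidences arise. This is exactly a dual affine $3$-net embedding of $\cN(\fQ)$, the concurrence point being the image of $\Qez$, which gives (b). Conversely, given a dual affine $3$-net as in (b) whose main lines meet at a point $p$, I would set $\theta(\Qez):=p$ and otherwise use the net embedding, exactly as in Lemma \ref{embeddingofTKintoP} and Corollary \ref{CD:orthoring}; the defining axioms of a $3$-net then ensure that the collinear triples in $\PP$ are precisely the lines of $L_0(\fQ K_3)$, giving (a).

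The dualities are routine. A point representation in $\PP$ is, by the definition of the dual plane, a line representation in $\PP^*$, so (a) $\iff$ (c); and projective duality sends a dual affine $3$-net in $\PP$ (points on three concurrent main lines, with cross-lines) to an affine $3$-net in $\PP^*$ (three pencils of lines with collinear centers, i.e.\ three parallel classes of an affine plane), so (b) $\iff$ (d). Finally (b) $\iff$ (e), (b) $\iff$ (f), and, under the loop hypothesis, (b) $\iff$ (g) are the content of Proposition \ref{L:affnet}.

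I expect the main obstacle to be the converse half of (a) $\iff$ (b): one must check that adjoining the concurrence point $p$ as the image of $\Qez$ creates no collinearities beyond those required by $L_0(\fQ K_3)$, and that $p$ is distinct from every net point, so that the resulting map faithfully and injectively represents $L_0(\fQ K_3)$ itself. This is precisely where the net axioms — that the common point $v=p$ of the main lines is not a point of the net, and that two net points on different main lines determine a unique cross-line meeting the third main line in a net point — must be matched carefully against the full list of rank-$2$ flats of $L_0(\fQ K_3)$.
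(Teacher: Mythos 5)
Your proposal is correct and follows essentially the same route as the paper: the paper's own proof simply says that (a) $\iff$ (b) is proved like the corresponding equivalence in Theorem \ref{parte1}(I) with the concurrence point of the main lines playing the role of the extra point $e_0$, and then obtains the remaining equivalences by duality together with Proposition \ref{L:affnet}. Your write-up just makes explicit the rank-$2$ flat analysis (the parallel classes $E_{ij}\cup\{e_0\}$ and the balanced triangles) that the paper leaves implicit in its reference to the earlier argument.
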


\begin{proof}
The equivalence of (a) and (b) is similar to that of (a) and (b) in Theorem \ref{parte1}(I), except that it is the common point of the main lines that serves as the extra point $\Tez$.

We obtain the other equivalences by duality and Proposition \ref{L:affnet}
\end{proof}

We want to compare the dual $3$-net embedding of this section to the synthetic orthographic matroid embedding of \orthopar.  We discuss the affine case; the projective case is similar.  
First, suppose in \orthoaff that $\Delta = K_3$ and $\bbA$ is an affine plane with a distinguished line $\bbA'$ in which $G(K_3)$ is represented by an embedding $\bz'$.  (In order for $\bz'$ to exist, the plane must have order at least 3.  That is not required in the projective case.)  Extend $\bbA$ to its projective plane $\PP$.  Choose $o=\bz'(e_{12})$, $u=\bz'(e_{23})$, and $i=\bz'(e_{13})$; also let $v$ be any point off $\bbA'$ and let $e$ be a point on $iv$ distinct from $i, v$.  We now have a coordinatization of $\PP$ by the ternary ring $\fT:=\fT(u,v,o,e)$ in which $uv$ is the ideal line, but all three lines (with $v$ omitted) are contained in the original affine plane $\bbA$.  In the notation of \orthopar, $\hE(K_3)$ is the union of those lines without $v$.  The three lines (without $v$) form a dual affine $3$-net $\cN^+$ and thence a biased expansion of $K_3$, in the notation of \orthopar $\Omega_0(\hE(K_3)) \cong \Omega(\cN^+) \cong \bgr{\fT^+K_3}$, and either \cite[Theorem 3.1\comment{TD:orthorep}]{BG6} or Lemma \ref{embeddingofTKintoP} tells us that 
$$
L_0\big(\Omega_0(\hE(K_3))\big) \cong M\big(\hE(K_3) \cup \{\he_0\}\big)
$$
by the natural correspondence $e\mapsto \he$ and $\Qez \mapsto \he_0$.

This matroid isomorphism implies that the balanced triangles of $\Omega_0(\hE(K_3))$ correspond to the collinear triples of points in $\hE(K_3)$ that are not contained in a main line; they also correspond to the concurrent triples of abstract lines in the abstract $3$-net $\cN(\fT^+)$, of which $\cN^+$ is a dual embedding in $\bbA$.  
Such a collinear triple is the simplest case of the following consequence of \cite[Corollary 4.5]{BG6}.  For $\hE \subseteq \hE(K_3)$, define $\Omega_0(\hE)$ as the spanning subgraph of $\Omega_0(\hE(K_3))$ whose edge set is $\hE$.

\begin{cor} \label{CD:orthoplane}
In an affine plane $\bbA$ let $G(K_3)$ be embedded in a line $\bbA'$.  If $\hE$ is a cross-closed subset of $\hE(K_3)$ that is not contained in a main line, then $\bgr{\fQ K_3} \cong \Omega_0(\hE)$, by the natural correspondence $e \mapsto \he$, for some subloop $\fQ$ of a principal loop isotope of $\fT^+$, and $M(\hE \cup \{\he_0\}) \cong L_0(\fQ K_3)$.  
\end{cor}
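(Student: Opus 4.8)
The plan is to imitate the proof of Corollary~\ref{CD:triplane}, reducing the whole statement to a single biased-graph isomorphism and letting the matroid isomorphism follow for free. First I would observe that it suffices to produce a loop $\fQ$ that is a subloop of a principal loop isotope of $\fT^+$ together with an isomorphism $\bgr{\fQ K_3} \cong \Omega_0(\hE)$ realized by the edge correspondence $e \mapsto \he$. The matroid claim $M(\hE \cup \{\he_0\}) \cong L_0(\fQ K_3)$ then comes automatically: since $\Omega_0(\hE)$ is a spanning subgraph of $\Omega_0(\hE(K_3))$, the extended lift matroid $L_0(\Omega_0(\hE))$ is exactly the restriction of $L_0(\Omega_0(\hE(K_3)))$ to $\hE \cup \{\Qez\}$, so the displayed isomorphism $L_0(\Omega_0(\hE(K_3))) \cong M(\hE(K_3) \cup \{\he_0\})$ restricts to $L_0(\Omega_0(\hE)) \cong M(\hE \cup \{\he_0\})$, and combining this with $L_0(\Omega_0(\hE)) \cong L_0(\fQ K_3)$ yields the result.

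The essential structural step is to show that $\Omega_0(\hE)$ is a biased expansion of $K_3$. This is the dual-affine analog, in the case $\Delta = K_3$, of the fact used in Corollary~\ref{CD:triplane}: by \cite[Corollary 4.5]{BG6}, a cross-closed set $\hE \subseteq \hE(K_3)$ yields a biased expansion $\Omega_0(\hE)$ of $K_3$, $K_2$, or $K_1$, with the two degenerate cases arising precisely when $\hE$ lies in a single main line. Since by hypothesis $\hE$ is not contained in a main line, $\Omega_0(\hE)$ falls in the nontrivial case and is a biased expansion of $K_3$.

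With that in hand I would transport $\Omega_0(\hE)$ across the isomorphism $\Omega_0(\hE(K_3)) \cong \bgr{\fT^+ K_3}$ and apply Proposition~\ref{PD:subnet} with $\fQ = \fT^+$: a biased-expansion subgraph of $\bgr{\fT^+ K_3}$ has the form $\bgr{\fQ_1 K_3}$ for some subquasigroup $\fQ_1$ of an isotope of $\fT^+$, corresponding to a $3$-subnet of $\cN(\fT^+)$. Finally, applying Lemma~\ref{L:subisotope} to $\fQ_1$ sitting inside that isotope of $\fT^+$ replaces both by loops, so that $\fQ_1$ becomes a subloop $\fQ$ of a loop isotope of $\fT^+$; because principally isotopic quasigroups generate the same $3$-net, this loop isotope may be taken principal and the edge correspondence $e \mapsto \he$ is preserved, giving $\bgr{\fQ K_3} \cong \Omega_0(\hE)$ as required.

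I expect the genuine work to be bookkeeping rather than conceptual difficulty: matching the cross-closure hypothesis precisely to the $K_3$-versus-$K_2/K_1$ trichotomy of \cite[Corollary 4.5]{BG6}, and confirming that the loop isotope supplied by Lemma~\ref{L:subisotope} can be taken principal, so that $\cN(\fT^+)$ and the correspondence $e \mapsto \he$ survive unchanged through all the isotopisms.
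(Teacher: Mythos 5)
Your proof is correct, but it takes a genuinely different (more self-contained) route than the paper's. The paper disposes of this corollary in one line: after the preceding discussion has identified $\Omega_0(\hE(K_3))$ with $\bgr{\fT^+ K_3}$ and established $L_0\big(\Omega_0(\hE(K_3))\big) \cong M\big(\hE(K_3) \cup \{\he_0\}\big)$, it simply sets $\Delta = K_3$ in \cite[Corollary 4.5]{BG6}, delegating the subloop statement and the matroid isomorphism entirely to that external reference. You instead transplant the paper's proof of the triangular-case analog, Corollary \ref{CD:triplane}, to the affine side: you use \cite[Corollary 4.5]{BG6} only for the structural trichotomy (biased expansion of $K_3$, $K_2$, or $K_1$), then apply Proposition \ref{PD:subnet} to $\bgr{\fT^+ K_3}$ to write $\Omega_0(\hE) \cong \bgr{\fQ_1 K_3}$ with $\fQ_1$ a subquasigroup of an isotope of $\fT^+$, then pass to loops via Lemma \ref{L:subisotope}, and obtain the matroid statement by restriction. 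That restriction step is legitimate: for a spanning subgraph with the inherited bias, the circuits of $L_0$ lying in the smaller edge set together with $e_0$ are exactly the circuits of the subgraph's $L_0$, and the projective-dependence matroid of a subset is the restriction of that of the larger set, so the displayed isomorphism does restrict as you claim. The two bookkeeping points you flag are the only delicate ones and both resolve as in Corollary \ref{CD:triplane}: the degenerate cases of the trichotomy are precisely the cross-closed sets inside a single main line, and the proof of Lemma \ref{L:subisotope} produces \emph{principal} isotopes (relabelling the multiplication table by the row and column of an element fixes the underlying set), with principal isotopies composing, so the loop isotope of $\fT^+$ may be taken principal and the correspondence $e \mapsto \he$ is undisturbed. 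What your route buys is independence from the precise formulation of \cite[Corollary 4.5]{BG6} and visible parallelism with the triangular case; what the paper's route buys is brevity, since the cited corollary already carries the full conclusion.
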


\begin{proof}
In light of the previous discussion, let $\Delta = K_3$ in \cite[Corollary 4.5]{BG6}.
\end{proof}

We draw the reader's attention to the fact that Corollary \ref{CD:orthoplane} is an affine proposition about a natural affine embedding, in contrast to Corollary \ref{CD:triplane}, the analog for the frame matroid and triangular $3$-nets.  We believe a frame matroid and a triangular $3$-net are essentially projective and their natural representations are by projective hyperplanes (lines, for the $3$-net); while a lift matroid and an affine $3$-net are essentially affine and their natural representations are by affine points.  That is an opinion but we think it is justifiable by the contrast between \mencev and \ortho and between Sections \ref{q frame} and \ref{q lift} as well as other reasons that are outside our scope here.

Affine $3$-nets are the planar instance of the synthetic affinographic hyperplane arrangements described in \orthoaffino.  This follows from the fact that the centers of an affine $3$-net are a representation of $G(K_3)$.  Throwing the centers to the ideal line $l_\infty$ in $\bbP$ gives a synthetic affinographic arrangement of lines in $\bbA := \PP \setminus l_\infty$.  The planar version of \cite[Corollary 3.12\comment{C:affinoexpansion}]{BG6} is the following property of affine $3$-nets.  

\begin{prop}\label{P:affsubnet}
Suppose $\cN$ is an affine $3$-net in a projective plane $\PP$ with centers in $l_\infty$; let $\bbA := \PP \setminus l_\infty$.  Then there are a ternary ring $\fT$ coordinatizing $\bbA$ and a subloop $\bar\fQ$ of $\fT^{*+}$ such that $\cN \cong \cN(\bar\fQ K_3)$.\footnote{We have not tried to decide whether the Hall--Dembowski definition $a+b := t(a,1,b)$ would result in replacing $\fT^*$ by $\fT$.}
\end{prop}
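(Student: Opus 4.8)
The plan is to reduce the statement to Proposition \ref{L:affnet} by passing to the dual plane, since an affine $3$-net and a dual affine $3$-net are dual notions. The first step is to dualize $\cN$. Because the three centers of $\cN$ lie on $l_\infty$ they are collinear; under the correspondence $\PP \leftrightarrow \PP^*$ the lines of $\cN$ become points, and the three collinear centers become three concurrent lines through the single point $l_\infty^*$. Hence $\cN^*$ is a dual affine $3$-net in $\PP^*$. The underlying quasigroup is unchanged by this dualization: three net-lines $L_{12}(\Qg), L_{23}(\Qh), L_{13}(\Qk)$ are concurrent in $\cN$ precisely when their dual points are collinear on a cross-line of $\cN^*$, so if $\cN \cong \cN(\fQ)$ then $\cN^* \cong \cN(\fQ)$ as well, now embedded as a dual affine $3$-net with the labelling $12, 23, 13$ preserved.

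Next I would invoke Proposition \ref{L:affnet}, but with $\PP^*$ playing the role of $\PP$. Because $\cN(\fQ)$ embeds as a dual affine $3$-net in $\PP^*$, that proposition yields a ternary ring $\fS$ coordinatizing $\PP^*$ together with a subloop $\bar\fQ \subseteq \fS^+$ to which $\fQ$ is isotopic. Putting $\fT := \fS^*$, the identity $\PP_\fT = \PP_{\fS^*} = \PP_\fS^* \cong (\PP^*)^* = \PP$ shows $\fT$ coordinatizes $\PP$, while $\fS = \fT^*$ gives $\fS^+ = \fT^{*+}$; thus $\bar\fQ$ is a subloop of $\fT^{*+}$. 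Since isotopic quasigroups have isomorphic labelled $3$-nets, the chain $\cN \cong \cN(\fQ) \cong \cN(\bar\fQ) = \cN(\bar\fQ K_3)$, using the correspondences of Section \ref{absnets}, then gives the conclusion.

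The step that needs real care — and the main obstacle — is to verify that this $\fT$ coordinatizes the affine plane $\bbA$, i.e.\ that $l_\infty$ really is the ideal line $\hZ = uv$ of the $\fT$-coordinatization rather than some arbitrary line of $\PP$; this is exactly where the hypothesis that the centers lie in $l_\infty$ is consumed. In the proof of Proposition \ref{L:affnet} the reference point $v$ of the constructed ternary ring is taken to be the common point of the main lines of the dual affine net; applied to $\cN^*$ in $\PP^*$ that common point is $l_\infty^*$, which therefore receives coordinate $\hz$. Dualizing through the relations $\PP_\fT^* = \PP_{\fT^*}$ and $\hz^* = \hZ$ recorded in Section \ref{pp}, the point $\hz = l_\infty^*$ of $\PP^*$ corresponds to the line $\hZ = l_\infty$ of $\PP$, so indeed $l_\infty = uv$ in the $\fT$-coordinatization and $\fT$ coordinatizes $\bbA = \PP \setminus l_\infty$. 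One could instead prove the proposition directly, transcribing the argument of Proposition \ref{L:affnet} with $u,v$ chosen on $l_\infty$ from the start and tracking why it is the dual additive loop $\fT^{*+}$, and not $\fT^+$, that controls collinearity of the primal net; the duality route merely packages that bookkeeping and makes the appearance of $\fT^*$ transparent.
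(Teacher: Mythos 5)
Your proposal is correct, but it takes a genuinely different route from the paper. The paper proves the proposition by the direct primal construction that you sketch only in your closing sentence: it chooses the coordinate quadruple inside $\PP$ itself, taking $u,v$ to be two centers of $\cN$ (so $uv=l_\infty$ is the ideal line from the outset) and $o,e$ to be intersection points of $\cN$ with $oe\wedge uv$ the third center, and then verifies by the incidence computation $[\Qg,\Qh]\in\langle1,\Qk\rangle \iff t(\Qg,1,\Qk)=\Qh \iff t^*(1,\Qg,\Qh)=\Qk$ that the net operation is dual addition, so $\fQ$ embeds in $\fT^{*+}$. Your route instead dualizes $\cN$ to a dual affine $3$-net in $\PP^*$, cites Proposition \ref{L:affnet} there to get $\fS$ with $\bar\fQ\subseteq\fS^+$, and sets $\fT:=\fS^*$, with $\fT^{*+}=\fS^+$ falling out of $\fT^{**}=\fT$. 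Both are sound, and you correctly isolated and resolved the one delicate point of the duality route: the \emph{statement} of Proposition \ref{L:affnet} does not control which line of $\PP$ becomes ideal, so you must reach into its \emph{proof} to see that the reference point $v$ is the common point of the main lines, hence dualizes to $l_\infty=\hZ$; that is exactly where the hypothesis on the centers is used. What each approach buys: yours reuses \ref{L:affnet} and makes the appearance of $\fT^*$ structurally transparent (it is forced by duality), while the paper's direct construction is self-contained, produces an explicit coordinatization adapted to $l_\infty$ (the three pencils get slopes $0$, $1$, $\infty$), and fits the affinographic viewpoint the proposition is meant to illustrate. The paper itself remarks after its proof that the argument ``can be regarded as a variation (dualized)'' of the proof of Proposition \ref{L:affnet}, and notes that the two placements of the coordinate system may yield different ternary rings --- which is consistent with your construction producing a possibly different, but equally valid, $\fT$.
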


\begin{proof}
Choose $u,v$ to be two centers of $\cN$ and $o,e$ to be intersection points of $\cN$ so that $oe \wedge uv$ is the third center.  Specifically, let $u$ be the center for $\cL_{12}$, let $v$ be the center for $\cL_{23}$, and place the center for $\cL_{13}$ at $oe \wedge uv$.  This gives $\fT := \fT(u,v,o,e)$.  Let $\Omega$ be the biased expansion of $K_3$ corresponding to $\cN$.  We know that $\Omega = \bgr{\fQ K_3}$ for a quasigroup $\fQ$.  There is a balanced triangle in $\Omega$ that corresponds to $o$; by an isotopism transform $\fQ$ to a loop in which the edges of this triangle are labelled $1$.  
Recall that $\fT$ is in bijection with the ordinary part $ou \setminus u$ of $ou$ (the part not in the ideal line).  The points of $ou$ that belong to lines of $\cN$ are elements of $\fT$ and through the correspondence of lines in $\cN$ with $E(\Omega)$ they are labelled by elements of $\fQ$ with $o$ labelled $1$; that is, $0 \in \fT \leftrightarrow 1 \in \fQ$.  

Since $e$ is a point of $\cN$, $i = ou \wedge ve \in \fQ$; thus $i=1 \in \fT$ also belongs to $\fQ$ (but the $\fQ$-identity is $o=0$).  The slopes of the three pencils of $\cN$ are $0$ (parallels to $ou$), $1$ (parallels to $oe$), and $\infty$ (parallels to $ov$).

We now prove $\fQ$ has the same operation as $\fT^{*+}$; as an equation, $\Qg\cdot\Qh = \Qg+^*\Qh = t^*(1,\Qg,\Qh)$.  For that to be true we must embed the pencils with centers $v$ for $\cL_{12}$, $u$ for $\cL_{23}$, and $oe \wedge uv$ for $\cL_{12}$.  The property that $\Qg +^* \Qh = \Qk$, or $t^*(1,\Qg,\Qh) = \Qk$, in the primal plane is $t(\Qg,1,\Qk) = \Qh$, which means geometrically that $[\Qg,\Qh] \in \langle1,\Qk\rangle$.  Now, $[\Qg,\Qh]$ is the intersection of embedded lines $l_{12}(\Qg)$ and $l_{23}(\Qh)$, while $\langle1,\Qk\rangle$ is the embedded line $l_{13}(\Qk)$; so $\Qg \cdot \Qh = \Qk$ in $\fQ$.  This proves $\Qg \cdot \Qh = \Qg+^*\Qh$; thus, $\fQ$ is isomorphic to a subloop $\bar\fQ$ of $\fT^{*+}$.
\end{proof}

Proposition \ref{P:affsubnet} can be regarded as a variation (dualized) on the proof of the implication (a) $\implies$ (d) in Proposition \ref{L:affnet}.  It is a variation since here the centers are all placed at ideal points, while there the centers are points on the non-ideal line $ou$.  In other words, although we make different choices of locating the $3$-net in $\PP$ we have a similar conclusion about the associated quasigroup and ternary ring.  This is an interesting fact because the ternary rings of the two proofs may not be the same.

\subsection{Desarguesian planes}\label{q desarglift}\

In \cite[Section IV.4.1]{BG} Zaslavsky developed a system for representing a gain graph in a vector space over a skew field $\fF$ when the gain group is a subgroup of the additive group $\fF^+$.  \ortho generalizes that system to biased graphs by removing the use of coordinates.  We have just demonstrated that affine $3$-net embedding as either points or lines agrees with the system of \ortho.  It follows that the projective embedding of a quasigroup expansion $\fQ K_3$ by ordinary or dual affine nets (Section \ref{embed lift}) agrees with the coordinatized system of \cite[Section IV.4.1]{BG} when the quasigroup is a subgroup of $\fF^+$, or isotopic to such a subgroup, and the plane is the affine (or projective) plane over $\fF$.

Similarly, since the affinographic hyperplane representation of \orthoaffino agrees with the coordinatized affinographic representation of \cite[Corollary IV.4.5]{BG}, the affinographic line embedding of $L(\fQ K_3)$ referred to in Proposition \ref{P:affsubnet} agrees with that of \cite[Corollary IV.4.5]{BG} when the plane is Desarguesian over $\fF$ and $\fQ$ is a subgroup of $\fF^+$.


\section{Planarity of expansions and $3$-nets}\label{q planar}

A $3$-net in a projective plane must be either affine or triangular because its centers are collinear or not.  Consequently we may combine the results of the previous two sections into a complete characterization of the embeddability of a $3$-net into a projective plane, or equivalently the planar embeddability of the matroid of a quasigroup expansion $\fQ K_3$ of the triangle $K_3$.
We do so by dispensing with the something extra previously added that stabilized the representation: a prescribed basis in the full frame matroid and a special point in the extended lift matroid.  (We say simply ``matroid'' because the frame and lift matroids are identical: $G(\fQ K_3) = L(\fQ K_3)$; cf.\ Section \ref{matroids}.)  We treat this as two consequences: first, for a given quasigroup expansion; second, given a $3$-net that is already embedded in a projective plane.

\subsection{$3$-Net interpretation of the canonical representations}\label{netrep}\

We summarize the two kinds of $3$-net embedding as statements about a quasigroup expansion of $K_3$.  The matroid of a set of lines in $\PP$ can be defined as its matroid when considered as points in $\PP^*$.

\begin{prop}  \label{PD:netrep}
Suppose a quasigroup expansion graph $\Omega$ corresponds to a $3$-net embedded regularly in a projective plane $\PP$ as either lines (a net) or points (a dual net).  Then the matroid of the embedded net is a canonical frame representation of $\Omega$ if the net is triangular, a canonical lift representation if the net is affine.
\end{prop}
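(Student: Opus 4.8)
The plan is to read the statement as a direct repackaging of Theorems \ref{parte1} and \ref{bmain}, so that the task is to route each configuration through the correct equivalence rather than to prove anything genuinely new. First I would fix notation: since $\Omega$ is a quasigroup expansion we may write $\Omega = \bgr{\fQ K_3}$, and the regularly embedded net is a realization of $\cN(\fQ)$. Regularity means the edges of $\Omega$ correspond bijectively to the net elements and that balanced triangles correspond to concurrent triples of lines (in the net case) or collinear triples of points (in the dual-net case); hence, by the net--expansion correspondence of Proposition \ref{PD:net} together with the matroid definitions of Section \ref{matroids}, the matroid of the embedded net is a representation of the common matroid $M := G(\Omega) = L(\Omega)$. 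All that then remains is to decide, from the triangular/affine dichotomy, which canonical extension this representation admits, where by Section \ref{matroids} ``canonical frame'' means extending to $\G(\Omega)$ and ``canonical lift'' means extending to $L_0(\Omega)$.

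In the triangular case I would exhibit the extension to $\G(\Omega)$ explicitly. If the net is realized by lines, its three centers are noncollinear, so the three lines they pairwise span are nonconcurrent; adjoining these realizes the three half edges of $\Omega\full$, and Theorem \ref{parte1}(I) (equivalence of (a) and (b)) certifies that the enlarged family represents $\G(\Omega)$. If instead the net is realized by points, the three main lines are nonconcurrent, and their three pairwise intersection points play the role of the half edges; Theorem \ref{parte1}(II) (equivalence of (a) and (b)) then gives a point representation of $\G(\Omega)$. In both subcases the net representation of $M$ extends to $\G(\Omega)$, so it is a canonical frame representation.

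In the affine case the extension instead adjoins a single element to serve as the extra point. For a dual net of points the main lines are concurrent, and declaring this common point to be the image of $e_0$ yields a representation of $L_0(\Omega)$ by Theorem \ref{bmain} (equivalence of (a) and (b)). For a net of lines the centers are collinear; by planar duality this configuration is a dual affine net of points in $\PP^*$, so the same equivalence of Theorem \ref{bmain} applied in $\PP^*$ extends it to $L_0(\Omega)$, with $e_0$ realized as the line spanned by the collinear centers. Either way the representation of $M$ extends to $L_0(\Omega)$, hence is a canonical lift representation; and since Proposition \ref{L:canonical} shows that the two extensions are mutually exclusive, the triangular/affine alternative matches the frame/lift alternative exactly as claimed.

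The substantive content---that adjoining the centers, or the single extra element, truly yields the asserted matroid---is already carried by Theorems \ref{parte1} and \ref{bmain}, so the only real work here is bookkeeping across the four subcases (lines or points, triangular or affine) and one appeal to planar duality to fit the affine-line subcase into the $\PP^*$ form of Theorem \ref{bmain}. The step that most repays care is verifying that the adjoined elements are geometrically forced into the right position---noncollinear versus collinear centers, nonconcurrent versus concurrent main lines---since it is precisely this incidence datum, supplied by the triangular or affine hypothesis, that dictates whether the extension lands in $\G(\Omega)$ or in $L_0(\Omega)$.
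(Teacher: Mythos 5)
Your proof is correct, but it takes a different route from the paper's. The paper disposes of this proposition in one line: it cites Proposition \ref{L:canonical} together with the synthetic representation theorems of the companion paper \cite{BG6} (Theorems 2.1 and 3.1 there), which directly assert that the matroid of the points on the lines through a basis is the full frame matroid (Menelaean, triangular case) and that the matroid of points on concurrent lines plus the concurrence point is the extended lift matroid (orthographic, affine case). You instead stay entirely inside the present paper, routing each of the four subcases through Theorems \ref{parte1} and \ref{bmain}, whose proofs rest on the ternary-ring coordinatization (Lemma \ref{L:netintoP}, Lemma \ref{embeddingofTKintoP}, Propositions \ref{L:trinet} and \ref{L:affnet}). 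What your version buys is self-containedness---no appeal to the synthetic machinery of \cite{BG6}---at the cost of being longer; what the paper's version buys is brevity and a coordinate-free justification. One point of care: the statements of the equivalences (a)$\iff$(b) in Theorems \ref{parte1} and \ref{bmain} only assert that embeddings of one kind exist when embeddings of the other kind do, whereas what is needed here is that the \emph{given} net representation itself extends; this is supplied by the constructive content of the proofs of those implications (adjoining the lines spanned by the centers, respectively the common point or line), and since you describe those adjunctions explicitly your argument is complete. A last small caveat: Proposition \ref{L:canonical}, which you invoke for the exclusivity of the two extensions, is stated only for nontrivial expansions; for the trivial expansion $1\cdot K_3$ both extensions exist simultaneously, but the proposition's conclusion still holds there (the stated alternative does not require exclusivity), so this does not affect correctness.
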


\begin{proof}  
This is essentially Proposition \ref{L:canonical} and an interpretation of \cite[Theorems 2.1\comment{TD:men} and 3.1\comment{TD:orthorep}]{BG6}.
\end{proof}

The non-main lines of $\cN^*$ are the cross-lines of the representation.  
Proposition \ref{PD:subnet} tells us that the $3$-subnets of $\cN$ correspond to the cross-closed subsets of the representation.

\subsection{Planarity of the quasigroup expansion matroid}\label{x planar}\

Now we collect the matroidal consequences, which are the culmination of this paper.

\begin{thm}[Planar Representation of a Quasigroup Expansion Matroid]\label{T:planar}
Let $\fQ K_3$ be a quasigroup expansion of $K_3$ and let $\PP$ be a projective plane.  The matroid $G(\fQ K_3)$ embeds as points in $\PP$ if and only if $\fQ$ is isotopic to a subquasigroup of $\fT^+$ or $\fT^\diamond$.  It embeds as points in $\PP^*$ if and only if $\fQ$ is isotopic to a subquasigroup of $(\fT^*)^+$ or $\fT^\times$.  In the former of each pair the representation is a canonical lift representation and in the latter it is a canonical frame representation.
\end{thm}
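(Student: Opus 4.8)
The plan is to reduce the statement to the two characterizations already in hand, Theorems \ref{parte1} and \ref{bmain}, using the canonical-extension dichotomy of Proposition \ref{L:canonical}. Since $\fQ K_3$ is a biased expansion of $K_3$, its frame and lift matroids coincide in a single rank-$3$ matroid $M := G(\fQ K_3) = L(\fQ K_3)$ (Section \ref{matroids}). If $\fQ$ is trivial then $M \cong U_{2,3}$, which embeds as points in every plane and every dual plane while being isotopic to the one-element subquasigroup $\{0\}$ of $\fT^+$ (resp.\ $(\fT^*)^+$), so both asserted equivalences hold vacuously and I may assume $\#\fQ \geq 2$. In that case each parallel class $E_{ij}$ is a rank-$2$ flat of $M$ and $E_{12}\cup E_{23}$ has rank $3$; hence in any point representation the three edge lines $L_{12},L_{23},L_{13}$ are pairwise distinct. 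Three distinct lines of a plane are either concurrent or form a triangle, and by Proposition \ref{L:canonical} and the discussion following it the representation extends to $L_0(\fQ K_3)$ in the concurrent case and to $\G(\fQ K_3)$ in the triangular case, and to exactly one of the two.

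This yields the decomposition that drives the proof: $M$ embeds as points in $\PP$ if and only if $L_0(\fQ K_3)$ or $\G(\fQ K_3)$ embeds as points in $\PP$, since every point embedding of $M$ extends to exactly one of them and, conversely, either extended embedding restricts to one of $M$. I would then quote Theorem \ref{bmain} for the first alternative, giving $\fQ$ isotopic to a subquasigroup of $\fT^+$, and Theorem \ref{parte1}(II) for the second, giving $\fQ$ isotopic to a subquasigroup of $\fT^\diamond$ (via $(\fT^\diamond)\opp = (\fT^*)^\times$ from \eqref{E:diamonddual} and the fact that a quasigroup is isostrophic to its opposite). Proposition \ref{PD:netrep} then labels the alternatives: the concurrent (dual affine net) case is the canonical lift representation and the triangular case is the canonical frame representation, so ``former'' and ``latter'' match the statement.

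For the ``points in $\PP^*$'' sentence I would run the identical argument with ``lines in $\PP$'' in place of ``points in $\PP$'', since a family of lines in $\PP$ is a point set of $\PP^*$ and its matroid is computed there. The same dichotomy now sends $M$ to a line embedding of $L_0(\fQ K_3)$ or of $\G(\fQ K_3)$. The frame alternative is covered directly by Theorem \ref{parte1}(I), yielding a subquasigroup of $\fT^\times$; the lift alternative I would obtain by applying the ``points'' case of Theorem \ref{bmain} inside the dual plane: writing $\PP = \PP_\fT$ and $\PP^* = \PP_{\fT^*}$, a point embedding of $L_0$ in $\PP^*$ corresponds to $\fQ$ isotopic to a subquasigroup of $(\fT^*)^+$. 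Assembling the two dual alternatives gives the second equivalence with its structures $(\fT^*)^+$ and $\fT^\times$, and Proposition \ref{PD:netrep} again assigns the lift/frame labels.

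The step needing the most care is not the incidence-geometric dichotomy but the bookkeeping that collapses the prior theorems' phrasings into the uniform ``$\fQ$ is isotopic to a subquasigroup'' form used here: Theorems \ref{parte1} and \ref{bmain} state their criteria variously as isostrophic-to-a-subquasigroup and isotopic-to-a-subloop, and they allow the coordinatizing ternary ring to be replaced by another one coordinatizing the same plane. I would reconcile these using Lemma \ref{L:subisotope} to trade a subquasigroup for a subloop after isotopy, the isostrophe-invariance of the associated $3$-net to absorb conjugation into a change of ternary ring, and \eqref{E:diamonddual} to keep the multiplicative and diamond bookkeeping landing on $\fT^\diamond$ and $\fT^\times$ rather than their opposites. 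This is the only genuinely algebraic, as opposed to synthetic, point in the argument.
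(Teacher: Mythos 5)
Your proposal is correct and follows essentially the same route as the paper's own proof: dispose of the trivial expansion directly, use Proposition \ref{L:canonical} (concurrent edge lines versus a triangle) to split any point embedding of $G(\fQ K_3)=L(\fQ K_3)$ into an extension to $L_0(\fQ K_3)$ or to $\G(\fQ K_3)$, and then invoke Theorems \ref{bmain} and \ref{parte1} for the two alternatives in $\PP$ and in $\PP^*$. Your added details (the rank argument that the edge lines are distinct, and the reconciliation of ``isostrophic to a subquasigroup'' with ``isotopic to a subloop'') only make explicit what the paper's terser proof leaves implicit.
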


\begin{proof}
If the expansion is trivial (i.e., $\#\fQ=1$), the matroid is a three-point line, which embeds in every projective plane, and the quasigroup is isomorphic to the subquasigroups $\{1\} \subseteq \fT^\times$ and $\{0\} \subset \fT^+$.  Every representation is canonical.

If the expansion is nontrivial, the three edge lines of the representation are either concurrent or not, giving a canonical lift or frame representation, respectively.  
The representations of the full frame matroid are characterized in Theorem \ref{parte1} and those of the extended lift matroid are characterized in Theorem \ref{bmain}.
\end{proof}


\section{Classical and nonclassical questions on the existence of representations}
\label{questions}

If there exists a finite biased expansion of $K_3$ that has neither a frame nor a lift representation in a finite projective plane, its matroid would be a finite rank-3 matroid that cannot be embedded in a finite projective plane.  No such matroid is presently known to exist; this is a long-standing question \cite[Problem 14.8.1]{Oxley}.
A negative answer to Problem \ref{PrD:planerepb} or \ref{PrD:planerepl} would resolve the question by providing a nonembeddable example.  

Recall that a loop corresponds to a $3$-net, which corresponds to a Latin square $L$ (actually, an isostrophe class of Latin squares).  
It is well known that a $3$-net corresponding to $L$ is a complete triangular $3$-net in some (finite) projective plane, and a corresponding loop is the multiplicative loop of a ternary ring, if and only if $L$ belongs to a complete set of mutually orthogonal Latin squares.  
Hence, not every loop is the multiplicative loop of a ternary ring.  

Our interest in geometrical realizations of matroids, however, leads to a more relaxed question.  Our constructions show that a finite biased expansion of $K_3$ corresponds to a $3$-net.  
A classical open question is whether every finite loop is isotopic to a subloop of the multiplicative loop of some finite ternary ring.  Restated in terms of biased graphs this becomes a significant matroid question:

\begin{prob}	\label{PrD:planerepb}
Does every finite biased expansion of $K_3$ have a frame representation in some finite projective plane?
\end{prob}

We know by a theorem of Hughes \cite{HughesAML} that every countable loop embeds in (indeed, is) the multiplicative loop of some countably infinite projective plane, and hence every frame matroid of a countable biased expansion of $K_3$ is projective planar, embedding in a countable plane.    
The answer for finite loops and finite planes could well be different, but a negative answer would give a finite matroid of rank 3 that embeds in no finite projective plane.  (A positive result would not answer the matroid question since there might be other finite rank-3 matroids that are not finitely projective planar.)  
Section \ref{q frame} suggests that the problem of deciding whether $G(\gamma\cdot K_3)$ has a finite projective representation belongs more to the theory of $3$-nets than to matroid theory---which does not make it less interesting for matroids---because it shows that for a finite biased expansion $\gamma\cdot K_3$ and the corresponding 3-net $\cN$ the following statements are equivalent.  
\begin{enumerate}[{\rm(i)}]
\item  $G(\gamma\cdot K_3)$ has a frame representation in some finite projective plane.
\item  $\cN$ is embeddable as a triangular $3$-net (not necessarily complete) in some finite projective plane.
\end{enumerate}

We know by another theorem of Hughes \cite{HughesAML} that every countable loop is the additive loop of some countably infinite projective plane.  A second classical open question is whether every finite loop is isotopic to a subloop of the additive loop of some finite ternary ring.   
In the language of biased graphs: 

\begin{prob}	\label{PrD:planerepl}
Does every finite biased expansion of $K_3$ have a lift representation in a finite projective plane?
\end{prob}

This question reaches all the way to Latin squares, since by Section \ref{q lift} the following statements are equivalent.  Let $L$ be a Latin square corresponding to $\cN$.  
\begin{enumerate}[{\rm(i)}]
\item  $\gamma\cdot K_3$ has a lift representation in some finite projective plane.
\item  $\cN$ is embeddable as an affine $3$-net (not necessarily complete) in some finite projective plane.
\item  $L$ is a subsquare of one of a complete set of mutually orthogonal Latin squares of some finite order.
\end{enumerate}

Finally, we would like to apply our planarity results to all biased graphs of order 3.  
Theorem \ref{T:qx}, which says that any given finite biased graph of order 3 is contained in some finite quasigroup expansion of $K_3$, is a step in that direction but it leaves open the essential question of what quasigroup that can be; without an answer, our embeddability criteria cannot be applied.

\begin{prob}	\label{Pr:bgplanerep}
Show how to apply planarity results to biased graphs of order 3 that are not biased expansions.
\end{prob}


\end{document}